\newcommand{\numberset}{\mathbb} 
\newcommand{\Z}{\numberset{Z}} 
\newcommand{\Q}{\numberset{Q}} 
\newcommand{\R}{\numberset{R}}
\newcommand{\bC}{\numberset{C}}
\newcommand{\bH}{\mathbb{H}}
\newcommand{\bT}{\mathbb{T}}
\newcommand{\cG}{\mathcal G}
\newcommand{\cL}{\mathcal L}
\newcommand{\cR}{\mathcal R}
\DeclareMathOperator{\RK}{RK}
\newcommand{\KK}{\mathit{KK}}
\DeclareMathOperator{\Ind}{Ind}
\DeclareMathOperator{\supp}{supp}
\DeclareMathOperator{\Res}{Res}
\DeclareMathOperator{\Tor}{Tor}
\newcommand{\medwedge}{{\textstyle\bigwedge}}
\newcommand{\KU}{\mathrm{KU}}
\newcommand{\sfZ}{\underline{\mathbb Z}}
\newcommand{\absv}[1]{\left|#1\right|}
\newcommand{\nvGR}[1]{\left\rceil#1\right\lceil}
\newcommand{\id}{\mathrm{id}}
\renewcommand{\PrintDOI}[1]{%
  \href{http://dx.doi.org/#1}{{\tt DOI:#1}}%
}
\renewcommand{\eprint}[1]{#1}
\theoremstyle{plain}
\newtheorem{theorem}{Theorem}[section]
\newtheorem{proposition}[theorem]{Proposition}
\newtheorem{lemma}[theorem]{Lemma}
\newtheorem{corollary}[theorem]{Corollary}
\newtheorem{theoremA}{Theorem}
\theoremstyle{definition} 
\newtheorem{definition}[theorem]{Definition}
\theoremstyle{remark}
\newtheorem{assumption}[theorem]{Assumption}
\newtheorem{example}[theorem]{Example}
\newtheorem{remark}[theorem]{Remark}
\author{Valerio Proietti}
\address{Department of Mathematics, University of Oslo, P.O. box 1053, Blindern, 0316 Oslo, Norway}
\email{valeriop@math.uio.no}
\author{Makoto Yamashita}
\address{Department of Mathematics, University of Oslo, P.O. box 1053, Blindern, 0316 Oslo, Norway}
\email{makotoy@math.uio.no}
\title[Homology and K-theory of dynamical systems, III.]{Homology and \texorpdfstring{$K$}{K}-theory of dynamical systems\\ III. Beyond stably disconnected Smale spaces}
\date{v4: November 11, 2024; v3: October 17, 2023; v2: October 28, 2022; v1: July 7, 2022.}
\DeclareRobustCommand{\SkipTocEntry}[5]{}
\begin{document}
%******************************************************************
% Beginning
%******************************************************************

\begin{abstract}
We study homological invariants of étale groupoids arising from Smale spaces, continuing our previous work, but going beyond the stably disconnected case by incorporating resolutions in the space direction.
We show that the homology groups defined by Putnam are isomorphic to the Crainic--Moerdijk groupoid homology with integer coefficients.

We also show that the $K$-groups of C$^*$-algebras of stable and unstable equivalence relations have finite rank.
For unstably disconnected Smale spaces, we provide a cohomological spectral sequence whose second page shows Putnam's (stable) homology groups, and converges to the $K$-groups of the unstable C$^*$-algebra.
\end{abstract}

\makeatletter
\@namedef{subjclassname@2020}{\textup{2020} Mathematics Subject Classification}
\makeatother

\subjclass[2020]{37B02; 55T25, 46L80}
\keywords{groupoid homology, topological dynamics, Smale spaces, spectral sequences, $K$-theory}

\maketitle
\setcounter{tocdepth}{1}
\tableofcontents

%******************************************************************
% Main
%******************************************************************

\addtocontents{toc}{\SkipTocEntry}\section*{Introduction}

In this paper we study homological and $K$-theoretical invariants of a class of hyperbolic dynamical systems known as \emph{Smale spaces} introduced by D.~Ruelle~\cite{ruelle:thermo}, continuing our work~\citelist{\cite{valmak:groupoid}\cite{valmak:groupoidtwo}}.
Previously we have focused on stably disconnected systems, which are represented by \emph{ample groupoids}, i.e., totally disconnected étale topological groupoids.
Now we lift this restriction and work with general non-wandering Smale spaces. 

Smale spaces capture hyperbolicity phenomena in a topological setting, akin to the dynamics of Anosov diffeomorphisms, and more generally, basic sets of Axiom A diffeomorphisms, in the differentiable (smooth) setting~\cite{bs:dynbook}. Other examples of Smale spaces include shifts of finite type, generalized solenoids, systems arising from self-similar group actions~\citelist{\cite{MR2162164}\cite{MR2526786}}, substitution tiling systems~\cite{MR1631708}, etc.

Aside from the purely dynamical interest in these systems, there is a fruitful interplay with the theory of operator algebras, via the construction of groupoid C$^*$-algebras. Given a second countable locally compact groupoid $G$ with a continuous Haar system, the convolution product on the space of compactly supported continuous functions gives rise to a complex algebra with involution that can be completed to a C$^*$-algebra~\cite{MR584266}.
This construction can be further generalized to incorporate continuous actions of $G$ on C$^*$-algebras, and can be analyzed by homological invariants such as the operator $K$-groups.%~\cite{black:kth}.

For Smale spaces, the C$^*$-algebras constructed from the stable and unstable equivalence relation capture interesting aspects of the homoclinic and heteroclinic structure of expansive dynamics~\citelist{\cite{put:algSmale}\cite{thomsen:smale}\cite{mats:ruellemarkov}}.
The C$^*$-algebras attached to Smale spaces provide a unified framework that contain (AF-algebras inside) the Cuntz--Krieger algebras, the Bunce--Deddens algebras, irrational rotation algebras, among others.
They are also known to have good structural properties one would expect in the theory of classification of C$^*$-algebras, see~\citelist{\cite{dgy:rrzero}\cite{deeley:strung}\cite{kp:clas}\cite{phil:clas}} and references therein. 

Another intriguing homological invariant for dynamical system is the homology of étale groupoids with finite cohomological dimension due to M.~Crainic and I.~Moerdijk, which is based on sheaves, derived formalism, and simplicial methods~\cite{cramo:hom}.
For \emph{ample} groupoids, which correspond to dynamical systems on totally disconnected spaces, a connection between this homology and the operator $K$-theory of the associated C$^*$-algebra was recently popularized by H.~Matui~\citelist{\cite{MR2876963}\cite{MR3552533}}, and gained a lot of attention in the C$^*$-algebra community.
More precisely, Matui has conjectured that groupoid homology (suitably periodicized) and $K$-groups of the groupoid C$^*$-algebra are isomorphic.
Although the original form of this conjecture is shown to be incorrect by some counterexamples~\citelist{\cite{scarparo:hk}\cite{deeley:hk}}, nonetheless it is still confirmed in many interesting cases by concrete computations~\citelist{\cite{simsfarsi:hk}\cite{bdgw:matui}}.

In our previous works~\citelist{\cite{valmak:groupoid}\cite{valmak:groupoidtwo}} we provided a more systematic picture behind this correspondence, constructing a homological spectral sequence whose $E^2$-sheet is given by the homology groups of ample groupoids $G$, and abuts to the $K$-groups of certain $G$-C$^*$-algebras~\cite{valmak:groupoid}, based on the Meyer--Nest theory of triangulated categorical structure on equivariant $\KK$-theory~\citelist{\cite{meyernest:tri}\cite{meyer:tri}}.
When the groupoid has \emph{torsion-free} stabilizers and satisfies (a stronger form of) the Baum--Connes conjecture as studied by J.-L.~Tu~\cite{MR1703305}, the abutment is indeed the $K$-groups of the groupoid C$^*$-algebra of $G$.

Turning back to Smale spaces, I.~F.~Putnam developed a theory of homology based on an intricate resolution of Smale spaces by symbolic dynamics (\emph{shifts of finite type}), see~\citelist{\cite{put:HoSmale}\cite{dkw:func}\cite{val:smale}}.
This theory can be seen as an attempt to classify Axiom A systems by means of purely combinatorial data, an insight that goes back to S.~Smale himself, in parallel with the theory of Morse--Smale systems.

More precisely, the main ingredient in Putnam's homology is a pair of Smale spaces $(Y, \psi)$ and $(Z, \zeta)$ that map onto $(X, \phi)$ such that: $(Y, \psi)$ has totally disconnected unstable sets, and the factor map induces a homeomorphism of stable sets (an \emph{$s$-bijective map}); $(Z, \zeta)$ has totally disconnected stable sets, and the factor map induces a homeomorphism of unstable sets (a \emph{$u$-bijective map}).
Then the iterated fiber products of these spaces give a simplicial-cosimplicial system of shifts of finite type, leading to this homology.

Putnam posed a number of interesting questions which have been a source of inspiration for this work, and in particular predicted a spectral sequence relating his homology groups to the operator $K$-groups of the groupoid~\cite{put:HoSmale}*{Question 8.4.1}.

In~\cite{valmak:groupoidtwo}, for the non-wandering Smale spaces \emph{with totally disconnected stable sets}, we showed that Putnam's stable homology coincides with the Crainic--Moerdijk homology for the unstable equivalence relation groupoid, reduced to a transversal, with integer coefficients.
Combined with the above spectral sequence from triangulated categorical structure of equivariant $\KK$-theory, it gave an answer to Putnam's question in this class of Smale spaces.

Motivated by this, in this paper we further generalize the identification of the homology groups of non-wandering Smale spaces, as follows.

\begin{theoremA}[Theorem~\ref{thm:compar-homologies}]\label{thmA:homology-compar}
Let $(X,\phi)$ be a non-wandering Smale space, and $T$ be a transversal for the unstable equivalence relation groupoid $R^u(X,\phi)$.
Then Putnam's stable homology $H^s_\bullet(X,\phi)$ is isomorphic to the groupoid homology of the étale groupoid $G = R^u(X,\phi)|_T$ with coefficients in the constant sheaf of integers:
\[
H^s_k(X,\phi)\cong H_k(G,\sfZ).
\]
\end{theoremA}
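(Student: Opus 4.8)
The plan is to interpose a $u$-bijective resolution of $(X,\phi)$ by a stably disconnected Smale space and to reduce to the main theorem of \cite{valmak:groupoidtwo} level by level. Concretely, I would choose a $u$-bijective factor map $\zeta\colon (Z,\zeta)\to(X,\phi)$ with $Z$ having totally disconnected stable sets (such maps exist for non-wandering Smale spaces by \cite{put:HoSmale}), and form the simplicial Smale space $Z^{[\bullet]}$ with $Z^{[M]}=Z\times_X\cdots\times_X Z$ ($M+1$ factors) together with the usual face and degeneracy maps. Each $Z^{[M]}$ again has totally disconnected stable sets, so the comparison theorem of \cite{valmak:groupoidtwo} applies termwise: there is a natural isomorphism $H^s_L(Z^{[M]})\cong H_L\bigl(R^u(Z^{[M]})|_{T_M},\sfZ\bigr)$, where the right-hand groupoid is ample.

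First I would rewrite Putnam's stable homology of $X$ in terms of this resolution. Completing $\zeta$ to an $s/u$-bijective pair $(Y,Z)$ and forming the shifts of finite type $\Sigma_{L,M}=Y^{[L]}\times_X Z^{[M]}$, Putnam's stable complex is the total complex of the double complex $D^s(\Sigma_{\bullet,\bullet})$. Taking homology first in the $L$-direction (the $s$-bijective direction): for fixed $M$ the map $Y^{[\bullet]}\times_X Z^{[M]}\to Z^{[M]}$ is an $s$-bijective resolution of $Z^{[M]}$, so the $L$-homology computes $H^s_L(Z^{[M]})$. This yields a first-quadrant (hence convergent) spectral sequence
\[
E^1_{M,L}=H^s_L(Z^{[M]})\Longrightarrow H^s_{L+M}(X,\phi),
\]
whose remaining $d^1$-differential is induced by the alternating sum of the simplicial face maps of $Z^{[\bullet]}$.

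On the groupoid side I would set up a descent spectral sequence along the same resolution. Since $\zeta$ is $u$-bijective, the induced homomorphism identifies $R^u(Z^{[M]})$ with the groupoid pulled back from $R^u(X,\phi)$ along $Z^{[M]}\to X$; passing to transversals, $T_Z\to T$ is a surjective étale map (an open cover in the étale topology) and $T_{Z^{[\bullet]}}$ is its \v{C}ech nerve. The pulled-back ample groupoids $G^{[M]}=R^u(Z^{[M]})|_{T_M}$ therefore form a simplicial resolution of $G=R^u(X,\phi)|_T$, and the invariance and descent properties of Crainic--Moerdijk homology give a second spectral sequence
\[
{}'E^1_{M,L}=H_L\bigl(G^{[M]},\sfZ\bigr)\Longrightarrow H_{L+M}(G,\sfZ).
\]
Finally I would invoke the naturality of the isomorphism of \cite{valmak:groupoidtwo} with respect to $u$-bijective maps to identify $E^1$ with ${}'E^1$ not merely termwise but as complexes, i.e.\ compatibly with the two $d^1$-differentials, which on both sides are the alternating sums of the face maps of $Z^{[\bullet]}$. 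The two spectral sequences then agree from the $E^2$-page onward and converge to isomorphic abutments, giving $H^s_k(X,\phi)\cong H_k(G,\sfZ)$.

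The main obstacle I anticipate is the construction of the descent spectral sequence ${}'E$ and the verification that it matches Putnam's: one must check that $T_Z\to T$ really is a surjective local homeomorphism, so that its \v{C}ech nerve computes $H_\bullet(G,\sfZ)$, and that the simplicial structure witnessing this descent coincides with the cosimplicial fiber-product direction in Putnam's double complex. This hinges on the functoriality of the comparison isomorphism of \cite{valmak:groupoidtwo} under $u$-bijective factor maps, which is the step most likely to require genuine work beyond formal manipulation.
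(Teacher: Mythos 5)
The decisive step of your argument fails: the structure map $T_Z \to T$ is not a surjective étale map, and it cannot be in any case where the theorem has new content. Since $Z$ has totally disconnected stable sets and $T_Z = g^{-1}(T)$ sits inside a finite union of stable sets of $Z$, the space $T_Z$ is totally disconnected; on the other hand, $T$ is a union of local stable sets of $X$, and these are totally disconnected precisely when $X$ is stably disconnected. A surjective local homeomorphism with totally disconnected domain forces the codomain to be totally disconnected, so outside the case already settled in \cite{valmak:groupoidtwo} the map $T_Z \to T$ is never étale. Concretely, for a hyperbolic toral automorphism (Example \ref{exa:rotalg}), $T$ is an arc, $T_Z$ is a Cantor set, and $T_Z \to T$ is a finite-to-one surjection of binary-expansion type, nowhere locally injective. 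Consequently your \v{C}ech-nerve descent spectral sequence ${}'E$ --- the mechanism by which you transport everything to the ample groupoids $G^{[M]}$ --- has no foundation; what you actually have is only that $T_Z \to T$ is proper and at most $N$-to-one. (A second, smaller issue: Putnam's double complex has total degree $L-M$, so your first spectral sequence is not ``first-quadrant hence convergent''; convergence requires passing to the normalized/alternating complex, which is bounded in the $M$-direction because $g$ is at most $N$-to-one.)

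What replaces étale descent in the paper is \emph{proper} descent implemented sheaf-theoretically, and this is the technical core of the proof. For the proper finite-to-one maps $g_n \colon T_n \to T$ one shows that $(g_n)_* g_n^* \sfZ$ is c-soft (Proposition \ref{prop:dir-img-soft}) and that the alternating subsheaves $F^{n,a}$ form a bounded c-soft resolution of $\sfZ$ by $G$-sheaves (Section \ref{sec:sof-res-from-u-bij-map}); properness gives $\Gamma_c(G^{(k)}, s^* F^{n}) \cong \Gamma_c((G \ltimes T_n)^{(k)}, s^* g_n^* \sfZ)$, so the columns of the resulting double complex compute the homology of the ample groupoids $G \ltimes T_n \cong R^u(Z_n,\zeta_n)|_{T_n}$. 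This is the corrected form of your ${}'E$. Moreover, instead of quoting Theorem \ref{thm:grpd-hmlg-compar-tot-disconn} termwise --- whose naturality under the $u$-bijective face maps $Z_M \to Z_{M-1}$ is exactly the point you leave open, and is not established in \cite{valmak:groupoidtwo} --- the paper sidesteps it by a single hyperhomology computation: it forms a triple complex combining $F^{\bullet,a}$ with the bar-type resolutions $\cL (f_j)_!$ attached to the open subgroupoids $H_j \subset G$ coming from the $s$-bijective direction, and identifies the relevant pieces with $D^s(\Sigma_{j,i})_{,A}$ directly, using that the quotient groupoids $H_{j,i}/S_{i+1}$ are AF (Proposition \ref{prop:af-grpd-triv-homol}) together with Putnam's description of the dimension groups. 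So your outline picks the right resolution (the simplicial Smale space $Z_\bullet$ and its transversals), but the descent mechanism you propose cannot work as stated, and the two points you flag as ``likely to require work'' are not technicalities: the first (étaleness) is false, and the second (naturality of the comparison isomorphism) is precisely what the paper's triple-complex argument is designed to circumvent.
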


One crucial ingredient is a cosimplicial system of $G$-sheaves resolving the constant sheaf of integers $\sfZ$, which naturally arises from a cosimplicial system of Smale spaces over $(X, \phi)$. This system is associated to the collection of iterated fiber products of the space $(Z, \zeta)$ above. We combine sheaf theoretic methods for the associated complex together with techniques behind our previous work to obtain the above theorem.

The above result, in combination with results from~\cite{valmak:kpd}, leads to a number of concrete structural results on homology of Smale spaces.
On one hand, we obtain a Künneth type formula for the homology of Smale spaces (Theorem~\ref{thm:kunses}), generalizing results in~\cite{valmak:groupoidtwo} and~\cite{dkw:dyn} to arbitrary non-wandering Smale spaces.
On the other, we give a solution to a question by Putnam~\cite{put:HoSmale}*{Question 8.3.2} on a relation between his homology of $(X, \phi)$ and the cohomology of the underlying space $X$ (see Theorem~\ref{thm:poindualsmale} for the precise statement).

\medskip
We next turn to the problem of comparison between homology and $K$-theory.
As we remarked above, there is a homological spectral sequence whose $E^2$-sheet is the Putnam homology, and which abuts to the $K$-groups of $C^* R^u(X, \phi)$ for Smale spaces with totally disconnected stable sets.
Here we give the dual analogue of this result, for Smale spaces whose \emph{unstable} sets are totally disconnected.

\begin{theoremA}[Corollary~\ref{cor:spec-seq-Smale-tot-disconn-stable-sets}]\label{thm:b}
Let $(X, \phi)$ be a non-wandering Smale space with totally disconnected unstable sets.
There is a cohomological spectral sequence abutting to $K_\bullet(C^* R^u(X, \phi))$, with $E_2$-sheet given by $H^s_\bullet(X, \phi)$.
\end{theoremA}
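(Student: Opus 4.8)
The plan is to reduce to the symbolic (shift of finite type) case via a $u$-bijective resolution and then to assemble the $K$-groups through a descent spectral sequence of Bousfield--Kan type. Since $(X,\phi)$ has totally disconnected unstable sets, Putnam's resolution theory supplies a $u$-bijective surjection $\pi\colon(Z,\zeta)\to(X,\phi)$ from a shift of finite type $Z$; observe that $Z$ automatically has totally disconnected stable sets (it is symbolic) and totally disconnected unstable sets, since $\pi$ restricts to a homeomorphism of each unstable set onto its image. I would then form the \v{C}ech nerve $Z_\bullet$, the simplicial Smale space with $Z_p=Z\times_X\cdots\times_X Z$ ($p+1$ factors) augmented over $X$. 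Each $Z_p$ is again a shift of finite type, and every face map $\partial_i\colon Z_p\to Z_{p-1}$ is $u$-bijective, because the fibre-product constraint locks all coordinates together along their common image in the unstable direction. Restricting to transversals yields a simplicial ample groupoid $G_\bullet$, with $G_p=R^u(Z_p)|_{T_p}$, augmented over $G=R^u(X,\phi)|_T$.

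Next I would apply the unstable groupoid $C^*$-algebra. Dually to the covariant, homological picture for $s$-bijective maps used in \cite{valmak:groupoidtwo}, this functor is \emph{contravariant} along $u$-bijective maps (via the wrong-way elements furnished by \cite{valmak:kpd}), so the coface maps dual to the $\partial_i$ make $p\mapsto C^* R^u(Z_p)$ a cosimplicial $C^*$-algebra $A^\bullet$, coaugmented by $\pi^*\colon C^* R^u(X,\phi)\to A^0=C^* R^u(Z)$. Applying the $K$-theory spectrum degreewise and totalizing produces the cohomological spectral sequence
\[
E_2^{p,q}=H^p\bigl(K_q(A^\bullet)\bigr)\ \Longrightarrow\ K_{q-p}\bigl(\Tot A^\bullet\bigr).
\]

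To compute the $E_2$-sheet I would exploit that each $Z_p$ is symbolic, so $C^* R^u(Z_p)$ is AF: its $K$-theory is concentrated in even degree and equal to $K_0(C^* R^u(Z_p))$. By Theorem \ref{thm:compar-homologies} for $Z_p$, together with the vanishing of higher homology for an AF (ample) groupoid, one has $K_0(C^* R^u(Z_p))\cong H_0(G_p,\sfZ)\cong H^s_0(Z_p)$ and no higher contributions. Thus, for $q$ even, $K_q(A^\bullet)$ is the cosimplicial abelian group $H_0(G_\bullet,\sfZ)$, whose cohomology is precisely the descent computation of the groupoid homology $H_\bullet(G,\sfZ)$ along the ample resolution $G_\bullet\to G$; the transfer (contravariant) variance built into $u$-bijective maps is exactly what renders this a cochain complex. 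By Theorem \ref{thm:compar-homologies} for $(X,\phi)$ this cohomology is $H^s_\bullet(X,\phi)$, so $E_2^{p,q}\cong H^s_p(X,\phi)$ for $q$ even and vanishes for $q$ odd, giving the asserted $E_2$-sheet.

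The crux, and the step I expect to be hardest, is convergence, namely the descent isomorphism $K_\bullet(\Tot A^\bullet)\cong K_\bullet(C^* R^u(X,\phi))$. This is where surjectivity and $u$-bijectivity of $\pi$ are indispensable: $\pi$ serves as an effective cover for the unstable structure, and one must show the coaugmentation $C^* R^u(X,\phi)\to\Tot A^\bullet$ is a $\KK$-equivalence. I would establish this inside the Meyer--Nest triangulated framework of \citelist{\cite{valmak:groupoid}\cite{valmak:kpd}}, exhibiting the augmented cosimplicial algebra as a projective resolution in the pertinent localizing subcategory so that the fibre of the coaugmentation is $K$-acyclic. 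Granting this descent, the displayed spectral sequence converges to $K_\bullet(C^* R^u(X,\phi))$ with $E_2$-sheet $H^s_\bullet(X,\phi)$, as required.
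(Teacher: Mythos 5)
Your global strategy (resolve by the symbolic cover $Z_\bullet$, compute $E_2$ from the AF algebras $C^*R^u(Z_p)$, reduce everything to a descent statement) follows the natural contour, and your $E_2$ heuristic is morally the paper's computation. But the step you yourself flag as the crux is genuinely missing, and the route you propose for closing it is precisely the one that fails. A $u$-bijective factor map produces a \emph{monad}, not a comonad, on $\KK^G$: pullback along $T_0\to T$ followed by the forgetful functor has a unit which coaugments your cosimplicial algebra, so $A\to A^\bullet$ is an injective-type resolution, and the objects $A^p$ are not projective for any relevant ideal. Indeed no adjunction can exhibit them as such: $T_0\to T$ is a finite-to-one map from a totally disconnected space onto a space that is not totally disconnected, hence is not étale, so the induction functor underlying Meyer--Nest projectivity in the $s$-bijective case (an \emph{open} subgroupoid $H\subset G$) simply does not exist here. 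The paper states this obstruction explicitly in the introduction: the cosimplicial system ``represents a kind of injective resolution, which however does not behave as nicely as projective resolutions in the framework of $\KK$-categories.'' Dually, one could hope for a Bousfield--Kan/$\Tot$ spectral sequence of the cosimplicial $K$-theory spectrum, but your cosimplicial object is unbounded, $K$-theory does not commute with the homotopy limits defining $\Tot$, and even granting conditional convergence the abutment $\pi_\bullet \Tot K(A^\bullet)$ is not identified with $K_\bullet(C^*R^u(X,\phi))$ --- that identification \emph{is} the theorem. The paper's mechanism avoids all of this: it symmetrizes and geometrically realizes the transversal system into $\nvGR{T_\bullet}$, a locally compact $G$-space equal to its $N$-skeleton (the cover is at most $N$-to-one), proves $K_\bullet(G\ltimes C_0(\nvGR{T_\bullet}))\cong K_\bullet(C^*_r G)$ (Proposition \ref{prop:k-grp-isom-for-naive-geom-real-cross-prod}) using proper base change and the contractibility of the fibers $\tilde g^{-1}(x)\cong\Delta^m$ (Propositions \ref{prop:sheaf-coh-emb-to-skel} and \ref{prop:naive-geom-realiz-K-group-isom}), and then obtains convergence for free from the \emph{finite} skeletal filtration \eqref{eq:spec-seq-for-K-of-G-crossed}. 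No descent theorem for cosimplicial C$^*$-algebras is needed or proved anywhere.

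A secondary gap sits in your $E_2$ identification: you work with the full cosimplicial group $K_0(C^*R^u(Z_\bullet))\cong D^s(Z_\bullet)$, whereas Putnam's homology is computed from the \emph{alternating} parts $D^s(\cdot)_{,A}$, which vanish above degree $N$. This boundedness, implemented in the paper by the alternating soft resolution $F^{\bullet,a}$, is what permits interchanging the nerve and resolution directions via the AF-vanishing of higher groupoid homology (Proposition \ref{prop:af-grpd-triv-homol}), and hence the identification of the $E_2$-page with $H_\bullet(G,\sfZ)\cong H^s_\bullet(X,\phi)$ from Theorem \ref{thm:compar-homologies}. With the unnormalized, unbounded complex, both this identification and the convergence of the associated double-complex spectral sequence require arguments you have not supplied.
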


Again this gives an answer to~\cite{put:HoSmale}*{Question 8.4.1} in this class of Smale spaces.
While the claim is quite analogous to our previous result, the proof turns out to be somewhat different.

For the case of stable sets being totally disconnected, one starts with a factor map from a shift of finite type that maps onto $X$ by an $s$-bijective map.
This leads to an open subgroupoid of the étale groupoid $G$, where the theory of projective resolutions in triangulated categories~\citelist{\cite{meyernest:tri}\cite{meyer:tri}} is applicable through restriction and induction functors between the corresponding equivariant $\KK$-categories.

For the case of unstable sets being totally disconnected, one starts with a factor map from a shift of finite type by a $u$-bijective map.
This leads to a totally disconnected space $T_0$ on which the étale groupoid $G$ acts.
Moreover, we obtain a simplicial system $T_\bullet$ of totally disconnected $G$-spaces, with a compatible action of the symmetric group, by taking the iterated pullback construction of the structure map $T_0 \to T$.
Morally speaking, this represents a kind of injective resolution, which however does not behave as nicely as projective resolutions in the framework of $\KK$-categories.

Here we instead adapt ideas behind G.~Segal's work~\cite{MR232393} on simplicial spaces and spectral sequences.
Using a variation of geometric realization suitable in the setting of locally compact spaces, we obtain a spectral sequence analogous to the Atiyah--Hirzebruch spectral sequence but without CW-complexes around.
The abutment to the desired $K$-groups, which corresponds to the Baum--Connes conjecture in the case of projective resolutions, comes from the sheaf theoretic comparison of cohomological invariants for the nerves of the associated transformation groupoid on the $G$-simplicial spaces $T_\bullet$.

\medskip
Finally, in the setting of general non-wandering Smale spaces, while we cannot directly relate the homology groups to the $K$-groups, the techniques underlying Theorem~\ref{thm:b} still allow us to prove the following finiteness result for the $K$-groups of the stable and unstable equivalence relation groupoids for Smale spaces.

\begin{theoremA}[Theorem~\ref{thm:ftrk}]\label{thmA:ftrk}
Let $(X,\phi)$ be a non-wandering Smale space.
Then the $K$-groups 
\[
K_\bullet(C^*R^s(X,\phi)),\qquad K_\bullet(C^*R^u(X,\phi))
\]
of the associated groupoid C$^*$-algebras are of finite rank.
\end{theoremA}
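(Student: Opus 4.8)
The plan is to reduce both finiteness statements to a single one about the unstable algebra, and then to extract it from the machinery built for Theorem \ref{thm:b}, applied to a symbolic resolution in the space direction, but without insisting on the identification of the $E_2$-page with homology. First I would dispose of the stable case by time reversal: since $R^s(X,\phi)=R^u(X,\phi^{-1})$ and $(X,\phi^{-1})$ is again a non-wandering Smale space, finiteness of $K_\bullet(C^*R^u)$ for all such systems at once yields finiteness of $K_\bullet(C^*R^s(X,\phi))$. Thus it suffices to treat $K_\bullet(C^*R^u(X,\phi))$. Passing to a transversal $T$ for the unstable equivalence relation, reduction to $T$ is a Morita equivalence, so $K_\bullet(C^*R^u(X,\phi))\cong K_\bullet(C^*G)$ with $G=R^u(X,\phi)|_T$.

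Next I would invoke Putnam's symbolic resolution. Choose a $u$-bijective factor map $\pi\colon(\Sigma,\sigma)\to(X,\phi)$ from a shift of finite type. Since $\pi$ is a bijection on unstable sets while $\Sigma$ has totally disconnected stable sets, the induced transversal $T_0=\pi^{-1}(T)$ is totally disconnected and carries an action of $G$, with $R^u(\Sigma,\sigma)|_{T_0}$ identified with the transformation groupoid $G\ltimes T_0$. Forming iterated fiber products of $\pi$ over $X$ produces a simplicial system $T_\bullet$ of totally disconnected $G$-spaces, exactly as in the proof of Theorem \ref{thm:b}; the $p$-th term corresponds to the $(p{+}1)$-fold fiber power $\Sigma_p$, which is again a shift of finite type. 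Applying the Segal-type geometric realization in the locally compact setting, I obtain a spectral sequence
\[
E_1^{p,q}=K_q\bigl(C^*(G\ltimes T_p)\bigr)\;\Longrightarrow\;K_{p+q}\bigl(C^*G\bigr).
\]
Each $G\ltimes T_p$ is the unstable groupoid of the shift of finite type $\Sigma_p$; applying Theorem \ref{thm:b} to $\Sigma_p$ (whose unstable sets are totally disconnected) gives a spectral sequence with $E_2$-sheet $H^s_\bullet(\Sigma_p)$, and the latter is finitely generated for a shift of finite type. Hence every entry $E_1^{p,q}$ is of finite rank.

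It then remains to propagate finiteness to the abutment. As $K$-theory is $\Z/2$-graded, the only issue is the infinite range of the simplicial index $p$, and this is controlled by the finite cohomological dimension of the Smale space groupoid, which forces $E_2^{p,q}=0$ for $p$ larger than a bound $N$ depending only on the covering dimension. Consequently each $K_n(C^*G)$ carries a finite filtration whose subquotients $E_\infty^{p,q}$ are subquotients of the finite-rank groups $E_1^{p,q}$; since finite rank is inherited by subquotients and extensions, $K_n(C^*G)$ has finite rank. Together with the first paragraph this gives the theorem.

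The main obstacle is the convergence statement, namely that this geometric-realization spectral sequence actually abuts to $K_\bullet(C^*G)$ for a general system that is neither stably nor unstably disconnected, where one no longer has the clean identification of the $E_2$-page with Putnam homology. I expect to handle this, as in Theorem \ref{thm:b}, through the sheaf-theoretic comparison of the cohomological invariants attached to the nerves of the transformation groupoids $G\ltimes T_\bullet$, which here plays the role of the Baum--Connes assembly in the projective-resolution picture; the content to be verified is that the simplicial $G$-space $T_\bullet$ is a genuine resolution of the unit space in the appropriate derived sense, so that nothing is lost upon realization. The gain over Theorem \ref{thm:b} is that for mere finiteness one needs only that the spectral sequence be bounded with finite-rank entries, so the delicate identification of the limiting term can be bypassed.
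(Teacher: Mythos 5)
Your overall strategy---run the Section~\ref{sec:symm-simp-sp} machinery on a $u$-bijective resolution and extract finiteness from a bounded spectral sequence with finite-rank entries, bypassing the identification of the $E_2$-page with homology---is indeed the paper's strategy, but your starting point fails. A general non-wandering Smale space admits \emph{no} $u$-bijective factor map from a shift of finite type: a $u$-bijective map induces homeomorphisms between unstable sets, so such a cover exists exactly when the unstable sets of $X$ are already totally disconnected, i.e.\ precisely in the setting of Theorem~\ref{thm:b} that you are trying to go beyond. The correct input (and the paper's) is Putnam's existence theorem, which supplies a $u$-bijective map $g\colon (Z,\zeta)\to(X,\phi)$ from a Smale space with totally disconnected \emph{stable} sets. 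The construction survives this replacement because the transversal $T_0=g^{-1}(T)$ lies inside stable sets of $Z$ and is therefore still totally disconnected; accordingly, the finite-rank input for the columns is not Krieger's dimension group of a shift of finite type but \cite{valmak:groupoidtwo}*{Corollary 3.10} applied to $(Z_p,\zeta_p)$, which have totally disconnected stable sets.

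The second gap is the spectral sequence itself. The machinery does not produce your integral sequence $E_1^{p,q}=K_q(C^*(G\ltimes T_p))\Rightarrow K_{p+q}(C^*G)$; it produces \eqref{eq:spec-seq-for-K-of-G-crossed}, whose $E_1$-term is $K_{p+q}(G\ltimes C_0(\nvGR{T_\bullet}^{(p)}\smallsetminus\nvGR{T_\bullet}^{(p-1)}))$, built from the \emph{naive} (symmetric-group-quotiented) realization. That quotient is what gives boundedness: for $p\ge N$ the free part $T_p^f$ is empty, since a fiber with at most $N$ points contains no $p+1$ distinct points, so the strata and hence the $E_1$-terms vanish. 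Your appeal to ``finite cohomological dimension'' forcing $E_2^{p,q}=0$ for large $p$ is not a mechanism that works for your version: $T_p\neq\emptyset$ for all $p$ (diagonal points), so your $E_1$-terms do not vanish, and the acyclicity of the complex $p\mapsto K_q(C^*(G\ltimes T_p))$ in high degrees is exactly the unproved point; in the paper it is supplied, after rationalization (harmless for finite rank), by Lemma~\ref{lem:triv-alt-for-non-free-action} and Proposition~\ref{prop:rat-segal-e1-sheet}, which identify $E_1^{p,q}\otimes\Q$ with the alternating part $(K_q(C^*(G\ltimes T_p))\otimes\Q)_A$, a summand of the finite-rank group $K_q(C^*R^u(Z_p,\zeta_p))\otimes\Q$. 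Finally, the convergence you defer as the ``main obstacle'' is actually the part already available in full generality: Proposition~\ref{prop:k-grp-isom-for-naive-geom-real-cross-prod} needs only that $G$ be \'etale with torsion-free stabilizers satisfying strong Baum--Connes (here $G$ is principal and amenable) and that $T_0\to T$ be a finite-to-one surjection, with no disconnectedness hypothesis on $X$---so the real work lies in the two points above, not there.
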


This was conjectured in~\cite{MR3692021}, and has implications on the structure of the \emph{Ruelle algebras}, i.e., the crossed products of $C^*R^s(X,\phi)$ and $C^*R^u(X,\phi)$ by the integer actions induced by $\phi$ (see Remark~\ref{rem:kpwiso}).
It also shows that C$^*$-algebras of Smale spaces do \emph{not} exhaust the class of (simple) classifiable, real rank zero C$^*$-algebras, which is a natural question in the context of the classification program of nuclear C$^*$-algebras, see~\citelist{\cite{deeley:strung}\cite{dgy:rrzero}} for further details.

\medskip
The paper is organized as follows.
In Section~\ref{sec:prelim} we recall the basic notions and fix our conventions used throughout the paper.
We also briefly summarize our previous work to set the background for this work.

Section~\ref{sec:symm-simp-sp} is the technical core of the paper, where we use methods from simplicial homotopy theory and manipulate sheaf resolutions to achieve approximation results in cohomology and $K$-theory.
These results are useful in view of the (co)simplicial structure arising from factor maps of shifts of finite type.

In the next three sections we consider applications to Smale spaces.
The main results here are the comparison theorem between Putnam's homology and groupoid homology (Theorem~\ref{thmA:homology-compar}) in Section~\ref{sec:comp-hom}, the spectral sequence for \emph{unstably} disconnected Smale spaces relating homology and $K$-groups (Theorem~\ref{thm:b}) in Section~\ref{sec:smale-sp-tot-disc-unst-sets}, and finally the finiteness result on the $K$-groups of Smale spaces (Theorem~\ref{thmA:ftrk}) in Section~\ref{sec:finite-gen-smale-sp}.

\addtocontents{toc}{\SkipTocEntry}\subsection*{Acknowledgments}

V.P.:~this research was supported by: Foreign Young Talents' grant (National Natural Science Foundation of China, QN2021137002L), CREST Grant Number JPMJCR19T2 (Japan Science and Technology Agency of the Ministry of Education,
Culture, Sports, Science and Technology), Marie Skłodowska-Curie Individual Fellowship (Horizon Europe, European Commission, Project No. 101063362).

M.Y.:~this research was funded, in part, by The Research Council of Norway [project 300837].

\section{Preliminaries}\label{sec:prelim}

We fix conventions in use throughout the paper.
We only briefly recall definitions and mostly refer to and generally follow the treatment in~\citelist{\cite{valmak:groupoid}\cite{valmak:groupoidtwo}}.

\subsection{Topological groupoids}

We mainly work with second countable, locally compact, Hausdorff groupoids.
Given such a groupoid $G$, we denote its base space by $G^{(0)}$, with structure maps $s, r \colon G \to G^{(0)}$, and the $n$-th
nerve space (for $n\geq 1$) given by
\[
G^{(n)} = \{(g_1, \dots, g_n) \in G^n \colon s(g_i) = r(g_{i+1}) \}.
\]
We say that $G$ is \emph{étale} if $s$ and $r$ are local homeomorphisms, and \emph{ample} if it is étale and its base space is totally disconnected.

When we work with operator algebras, we further assume that $G$ admits a \emph{continuous Haar system}~\cite{MR584266}.
A choice of continuous left Haar system is denoted by $(\lambda^x)_{x \in G^{(0)}}$, and the associated C$^*$-algebras are denoted by $C^*(G, \lambda)$ and $C^*_r(G, \lambda)$, for the universal and reduced model, respectively.
Most concrete groupoids we consider will be amenable so that this distinction disappears.
When the choice of $\lambda$ is already understood, we also write $C^* G$ and $C^*_r G$.
When $G$ is étale, we take $\lambda$ to be the counting measure.

Following~\cite{cramo:hom}, we say that a (topological) groupoid homomorphism $f \colon H \to G$ is a \emph{Morita equivalence} if
\[
\{ (x, g) \mid x \in H^{(0)}, g \in G, f(x) = r(g) \} \to G^{(0)}, \quad (x, g) \mapsto s(g)
\]
is an étale surjection, and
\[
\begin{tikzcd}
 H \arrow[r, "f"] \arrow[d, "{(r,s)}"] & G \arrow[d, "{(r,s)}"]\\
 H^{(0)} \times H^{(0)} \arrow[r, "{(f,f)}"] & G^{(0)} \times G^{(0)}
\end{tikzcd}
\]
is a pullback diagram.
Two groupoids $G$, $H$ are \emph{Morita equivalent} if there is another groupoid $K$ and Morita equivalence homomorphisms $K \to G$, $K \to H$.
This is equivalent to a different notion, probably more familiar to operator algebraists, given through the existence of a \emph{principal bibundle} $H \curvearrowright X \curvearrowleft G$, as defined in~\cite{murewi:morita} (see~\cite{simsfarsi:hk}*{Prop.~3.10} for the equivalence).

\begin{assumption}\label{ass:tr}
      In this paper, when $G$ is a groupoid which is not étale, we assume the existence of an abstract transversal $T\subseteq G^{(0)}$ such that $G|_T$ can be given an étale topology, and there is a Morita equivalence $G\sim G|_T$; the details on how to re-topologize $G|_T$ and how to establish the Morita equivalence with $G$ are in~\cite{put:spiel}. This assumption on the existence of $T$ is satisfied in most examples arising from geometric or dynamical situations.
\end{assumption}

Given groupoid homomorphisms $f_i \colon H_i \to G$ for $i = 1, \ldots, n$, we consider \emph{multiple groupoid pullbacks} $H_1 \times_G \dots \times_G H_n$ as we did in~\cite{valmak:groupoid}.
This is a groupoid whose base is given by
\[
\{ (x_1, g_1, x_2, \dots, g_{n-1}, x_n) \mid x_i \in (H_i)^{(0)}, g_i \in G^{f_i(x_i)}_{f_{i+1}(x_{i+1})} \},
\]
and an arrow from $(x_1, g_1, \dots, x_n)$ to $(x'_1, g'_1, \dots, x'_n)$ is given by a tuple $(h_1, \dots, h_n)$ for $h_i \in (H_i)^{x'_i}_{x_i}$ such that the diagram
\[
\begin{tikzcd}
f_1(x'_1) & f_2(x'_2) \arrow[l, "g'_1"] & \dots & f_n(x'_n) \arrow[l, "g'_{n-1}"] \\
f_1(x_1) \arrow[u, "f_1(h_1)"] & f_2(x_2) \arrow[l, "g_1"] \arrow[u, "f_2(h_2)"] & \dots & f_n(x_n) \arrow[l, "g_{n-1}"] \arrow[u, "f_n(h_n)"]
\end{tikzcd}
\]
is commutative.

\begin{proposition}\label{prop:triv-mult-pllback}
Let $G$ be a topological groupoid, and $n$ be a nonnegative integer.
Then the fibered product groupoid $G^{\times_G (n+1)}$, with respect to the copies of identity homomorphisms $G \to G$, is Morita equivalent to $G$.
\end{proposition}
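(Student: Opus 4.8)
The plan is to show that the first-coordinate projection realizes $G^{\times_G(n+1)}$ as a pullback (restriction) groupoid of $G$, and then to verify the two defining conditions of a Morita equivalence for this projection. Write $m = n+1$; when $n = 0$ the groupoid $G^{\times_G 1}$ is literally $G$, so I assume $m \geq 2$. First I would simplify the description of the base: in a tuple $(x_1, g_1, x_2, \dots, g_{m-1}, x_m)$ the units satisfy $x_i = r(g_i)$ and $x_{i+1} = s(g_i)$, so they are redundant and the object space is canonically the nerve space $G^{(n)}$ of composable strings $(g_1, \dots, g_n)$. Under this identification an arrow $g_\bullet \to g'_\bullet$ is a tuple $(h_1, \dots, h_m)$ constrained by $g'_i h_{i+1} = h_i g_i$, and I would point out that these relations solve to $h_{i+1} = (g'_i)^{-1} h_i g_i$, so the whole tuple is recovered from its first entry $h_1 \in G^{r(g'_1)}_{r(g_1)}$. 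Thus $G^{\times_G(n+1)}$ is exactly the pullback groupoid of $G$ along $r_1 \colon G^{(n)} \to G^{(0)}$, $g_\bullet \mapsto r(g_1)$, with projection $P \colon G^{\times_G(n+1)} \to G$, $(h_1, \dots, h_m) \mapsto h_1$, a continuous homomorphism (first coordinates compose) covering $r_1$ on objects.

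Next I would dispatch the pullback-square (full faithfulness) condition, which is the easy half. Its content is that an arrow of $G^{\times_G(n+1)}$ is pinned down, homeomorphically, by the triple consisting of its image $h_1$ in $G$ together with its source and range in $G^{(n)}$. The assignment $(h_1, \dots, h_m) \mapsto (h_1, s(h_\bullet), r(h_\bullet))$ is continuous, and so is its inverse, given by the explicit formula $h_{i+1} = (g'_i)^{-1} h_i g_i$; together they identify the arrow space with $G \times_{G^{(0)} \times G^{(0)}} (G^{(n)} \times G^{(n)})$. The compatibility of the anchor maps, $r(h_1) = r(g'_1)$ and $s(h_1) = r(g_1)$, is immediate from the constraints $h_i \in G^{x'_i}_{x_i}$, so the square is a pullback.

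The remaining condition — that
\[
Z = \{(g_\bullet, g) : g_\bullet \in G^{(n)},\ g \in G,\ r(g) = r(g_1)\} \longrightarrow G^{(0)}, \qquad (g_\bullet, g) \mapsto s(g)
\]
is an étale surjection — is where the real work lies. Surjectivity is structural and easy: the unit section $u \mapsto (\id_u, \dots, \id_u)$ of $r_1$, together with $g = \id_u$, hits every $u \in G^{(0)}$. The delicate point is the local-homeomorphism property. I would identify $Z$ with the fibered product $G^{(n)} \times_{G^{(0)}} G$ (over $r_1$ and $r$) and the displayed map with $s$ on the second factor, so that it suffices to check that $r_1 \colon G^{(n)} \to G^{(0)}$ is étale; then $\sigma$ is a pullback of $r_1$ followed by the étale map $s$. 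The crux is the observation that, inside a product of bisections, a composable string is completely determined by its first arrow — hence $r_1$ is a local homeomorphism. I expect this to be the main obstacle: reconciling the subspace (nerve) topology on $G^{(n)}$ with the étale structure so that $r_1$, and thus $\sigma$, is genuinely a local homeomorphism. This step uses the étale hypothesis essentially; for a general topological $G$ one first passes, via our standing transversal assumption, to the étale model $G|_T$ and argues there.
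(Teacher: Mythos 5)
Your proof is correct and takes essentially the same route as the paper: the paper likewise exhibits a coordinate projection (onto the \emph{last} factor, together with the diagonal homomorphism in the other direction) as an explicit Morita equivalence homomorphism $G^{\times_G (n+1)} \to G$, leaving implicit the verification of the two defining conditions, which you carry out in detail via the identification of $G^{\times_G (n+1)}$ with the pullback groupoid of $G$ along $r_1 \colon G^{(n)} \to G^{(0)}$. Your closing caveat about non-étale $G$ (where the étale-surjection condition must either be read as an open surjection or handled by passing to a transversal) applies equally to the paper's own proof and is harmless for the applications, in which $G$ is always étale.
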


\begin{proof}
We give explicit Morita equivalence homomorphisms between $G$ and $G^{\times_G (n+1)}$ in both ways (formally, we only need to know one).
In one direction, $f \colon G^{\times_G (n+1)} \to G$ is given by
\[
(G^{\times_G (n+1)})^{(0)} \to G^{(0)}, \quad (x_0, g_1, \dots, x_n) \mapsto x_n
\]
at the level of base, and
\[
G^{\times_G (n+1)} \to G, \quad (g_0, \dots, g_n) \mapsto g_n
\]
at the level of arrows.
In the other direction, $f' \colon G^{\times_G (n+1)} \to G$ is given by
\[
G^{(0)} \to (G^{\times_G (n+1)})^{(0)}, \quad x \mapsto (x, \id_x, \dots, x)
\]
at the level of base, and
\[
G \to G^{\times_G (n+1)}, \quad g\mapsto (g, g, \dots, g)
\]
at the level of arrows.
\end{proof}

\subsection{Homology of étale groupoids}

%Here and in later sections, we sometimes use the language of derived functors and derived categories; the reader may find more information on this topic in~\cite{wei:homalg}*{Chapters 2 and 10}.
In this section let us recall the homology of étale groupoids with coefficients in equivariant sheaves as defined by Crainic and Moerdijk~\cite{cramo:hom}. 

A \emph{$G$-sheaf} is a sheaf $F$ on $G^{(0)}$ equipped with a continuous action of $G$, concretely given by a map of sheaves $s^* F \to r^* F$ on $G$.
A $G$-sheaf $F$ is called \emph{(c-)soft} when the underlying sheaf on $G^{(0)}$ has that property, i.e, for any (compact) closed subset $S \subset G^{(0)}$ and any section $x \in \Gamma(S, F)$, there is an extension $\tilde x \in \Gamma(X, F)$.

\begin{assumption}
      We are going to make the standing assumption that the cohomological dimension of any open sets of $G^{(0)}$ is bounded by some fixed integer $N$: to be precise, $H_c^k(U, F) = 0$ for any open subset $U \subset G^{(0)}$, any sheaf of commutative groups $F$, and any $k > N$.
\end{assumption}

The assumption above is guaranteed when $G^{(0)}$ is a metrizable space of Lebesgue topological dimension $N$, which covers all the concrete examples we consider.
To see this, observe that when the topological dimension of any compact subset $A \subset G^{(0)}$ is bounded by $N$~\cite{MR0482697}*{Section 3.1}, then the Čech cohomology $\check{H}^\bullet(A; F)$ will vanish in degree above $N$.
Then, by paracompactness, $\check{H}^\bullet(A; F)$ agrees with the sheaf cohomology $H^\bullet(A; F) = R^\bullet \Gamma_A(F)$ for the right derived functor of the functor $\Gamma_A(F) = \Gamma(A, F)$, and we can combine~\cite{MR0345092}*{Remarque II.4.14.1 and Théorème II.4.15.1} to get the claim.

Let $F$ be a $G$-sheaf of commutative groups.
Then there is a resolution of $F$ as above by c-soft $G$-sheaves, and the \emph{homology with coefficient $F$}, denoted $H_\bullet(G, F)$, is defined as the homology of the total complex of the double complex $(C_i^j)_{0 \le i,j}$, whose terms are given by
\[
C_i^j = \Gamma_c(G^{(i)}, s^* F^j),
\]
which has homological degree $i - j$.
In more generality, when $F_\bullet$ is a homological complex of $G$-sheaves bounded from below, consider a resolution of each $F_j$ by c-soft $G$-sheaves $F_j^k$ as above, which fit into a bicomplex resolving $F_\bullet$.
Then the \emph{hyperhomology with coefficient $F_\bullet$}, denoted by $\bH_\bullet(G, F_\bullet)$, is the homology of triple complex with terms
\[
C_{i,j}^k = \Gamma_c(G^{(i)}, s^* F_j^k),
\]
which has homological degree $i + j - k$.

When two étale groupoids $G$ and $H$ are Morita equivalent, there are natural correspondences between the $G$-sheaves and $H$-sheaves inducing an isomorphism of groupoid homology.
In particular, if $f \colon H \to G$ is a Morita equivalence homomorphism, we have
\[
\bH_\bullet(G, F_\bullet) \cong \bH_\bullet(H, f^* F_\bullet)
\]
for any complex $F_\bullet$ of $G$-sheaves as above.

Recall that a groupoid is called \emph{elementary} if it is the disjoint union of groupoids $K_i$, each of which is isomorphic to the product of a second countable locally compact space and a transitive principal groupoid on a finite set.
An ample groupoid is an \emph{AF groupoid} when it is an increasing union of elementary groupoids~\cite{MR584266}*{Definition III.1.1}.

\begin{proposition}[cf.~\cite{MR2876963}*{Theorem 4.11}]\label{prop:af-grpd-triv-homol}
Let $G$ be an AF groupoid, and $F$ be a $G$-sheaf.
Then $H_p(G, F) = 0$ for $p > 0$.
\end{proposition}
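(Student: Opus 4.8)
The plan is to reduce, in two stages, to the homology of a space viewed as a trivial groupoid. Write $G = \bigcup_n G_n$ as an increasing union of open elementary subgroupoids with common unit space $G^{(0)}$. First I would show that groupoid homology commutes with this inductive limit, so that it is enough to prove $H_p(G_n, F) = 0$ for $p > 0$ for each fixed $n$. Since every $G_n$ is a disjoint union of pieces of the form $Y \times R$, with $R$ a transitive principal groupoid on a finite set, the second stage would discard the finite factor by Morita invariance and reduce to the trivial groupoid on the space $Y$, for which a direct computation of the nerve complex gives the vanishing.

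For the first stage, fix a resolution $F \to F^\bullet$ by c-soft $G$-sheaves. As c-softness is a property of the underlying sheaf on $G^{(0)} = G_n^{(0)}$ and the $G$-action restricts to a $G_n$-action, the same $F^\bullet$ serves as a c-soft resolution of $F$ over each $G_n$ and over $G$. The nerve spaces satisfy $G^{(i)} = \bigcup_n G_n^{(i)}$ as an increasing union of open subsets, and any compact subset of $G^{(i)}$ lies in some $G_N^{(i)}$; hence
\[
\Gamma_c(G^{(i)}, s^* F^j) \cong \varinjlim_n \Gamma_c(G_n^{(i)}, s^* F^j).
\]
By the standing bound $H_c^k(U, F^j) = 0$ for $k > N$, only the terms with $0 \le j \le N$ survive, so the total complex computing $H_\bullet(G, F)$ has finitely many nonzero terms in each total degree. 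Totalization therefore commutes with the filtered colimit, and since filtered colimits are exact they commute with homology, giving
\[
H_\bullet(G, F) \cong \varinjlim_n H_\bullet(G_n, F).
\]
It thus suffices to treat each elementary $G_n$.

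For the second stage, write $G_n = \bigsqcup_i K_i$ with $K_i \cong Y_i \times R_i$ and $R_i$ a transitive principal groupoid on a finite set. Compactly supported sections over a disjoint union decompose as a direct sum, so $H_\bullet(G_n, F) \cong \bigoplus_i H_\bullet(K_i, F)$ and it remains to show $H_p(K_i, F) = 0$ for $p > 0$. Each $Y_i \times R_i$ is Morita equivalent to the trivial groupoid on $Y_i$, a transitive principal groupoid on a finite set being Morita equivalent to the one-point groupoid; by the Morita invariance of groupoid homology recalled above we may replace $K_i$ by $Y_i$. For the trivial groupoid on a space $Y$ the nerve is $Y^{(i)} \cong Y$ for all $i$, with every face map equal to $\id_Y$, so the alternating sum $\sum_{k=0}^i (-1)^k (d_k)_*$ acting on $\Gamma_c(Y, F^j)$ equals $\id$ for $i$ even and $0$ for $i$ odd. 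The homology in the nerve direction is therefore concentrated in degree $i = 0$, where it is $\Gamma_c(Y, F^j)$, and taking homology in the resolution direction yields $H_n(Y, F) \cong H_c^{-n}(Y, F)$, which vanishes for $n > 0$.

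The main obstacle is the first stage: one must ensure that a single c-soft resolution serves the entire tower $(G_n)_n$ and that $\Gamma_c$ commutes with the union, for which the finiteness of terms in each total degree — guaranteed by the standing cohomological dimension bound — is the essential point. The trivial-groupoid computation, while the conceptual heart of the statement, is then immediate.
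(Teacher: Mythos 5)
Your proposal is correct and follows essentially the same route as the paper: an increasing-union/colimit argument reducing to the elementary subgroupoids (the paper phrases this at the level of a single compactly supported cycle, which must land in some $K_i^{(p)}$), followed by Morita equivalence from each elementary piece to a space, whose positive-degree homology vanishes (the paper cites its Part I result for totally disconnected spaces, where you instead compute the trivial-groupoid nerve directly). One simplification worth noting: since an AF groupoid is ample, every $G$-sheaf is already c-soft on the totally disconnected base $G^{(0)}$, so no resolution is needed at all; this sidesteps your slightly misstated boundedness claim (the dimension bound does not make terms of a given resolution vanish beyond degree $N$ — rather it lets one choose a resolution of length at most $N$) and is what makes the paper's cycle-level argument so short.
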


\begin{proof}
The proof is a close analogue of that for~\cite{MR2876963}*{Theorem 4.11}.
For each $i$, the groupoid $K_i$ is Morita equivalent to the space $K_i \backslash K_i^{(0)}$ regarded as a trivial groupoid.
The latter space is second countable and totally disconnected, hence has trivial groupoid homology (see~\cite{valmak:groupoid}*{Proposition 1.8}).
Consider a class of $H_p(G, F)$, represented by a cycle $f \in \Gamma_c(G^{(p)}, s^* F)$.
Since $f$ has compact support, there is an index $i$ with $f \in \Gamma_c(K_i^{(p)}, s^* F)$.
Then we have $H_p(K_i, F) = 0$ by the Morita invariance of groupoid homology, thus $f$ must be a boundary.
\end{proof}

\subsection{Derived functor formalism}
Let us briefly recall the formalism of derived functors in groupoid homology from~\cite{cramo:hom}*{Section 4}.
Let $G$ and $G'$ be étale groupoids, and $\phi\colon G \to G'$ be a continuous groupoid homomorphism.
Then, for each $x \in G^{\prime(0)}$, the \emph{comma groupoid} $x / \phi$ is defined as the groupoid whose objects are the pairs $(y, g')$, where $y \in G^{(0)}$ and $g' \in G_{x}^{\prime \phi(y)}$, and an arrow from $(y_1, g'_1)$ to $(y_2, g'_2)$ is given by $g \in G_{y_1}^{y_2}$ such that $\phi(g) g'_1 = g'_2$.
This is an étale groupoid equipped with a homomorphism $\pi_x \colon x / \phi \to G$.

When $F$ is a $G$-sheaf of commutative groups, we can consider a simplicial system of $G'$-sheaves, denoted by $B_\bullet(\phi, F)$, which at the level of stalks is given by
\[
B_n(\phi, F)_x = \Gamma_c((x/\phi)^{(n)}, s^* \pi_x^* F).
\]
The \emph{left derived functor} $\cL \phi_! F_\bullet$ for a homological complex of $G$-sheaves bounded from below $F_\bullet$ is
represented by the total complex of the triple complex of $G'$-sheaves with terms $B_i(\phi, F_j^k)$ with homological degree $i + j - k$, where $F_j^\bullet$ is a resolution of $F_j$ by c-soft $G$-sheaves.
Note that this is well defined up to a quasi-isomorphism of $G'$-sheaves.
The \emph{$n$-th derived functor}, denoted by $L_n \phi_! F_\bullet$, is the $G'$-sheaf given as the $n$-th homology of $\cL \phi_! F_\bullet$.
The fiber of this sheaf is given by~\cite{cramo:hom}*{Proposition 4.3}:
\begin{equation}\label{eq:n-th-derived-stalk}
(L_n \phi_! F_\bullet)_x = \bH_n( x / \phi , \pi_x^* F_\bullet).
\end{equation}
If $G'$ is the trivial groupoid and $\phi$ is the unique homomorphism $G \to G'$, then we recover the definition of $\bH_n(G, F_\bullet)$.

Aside from the pullback functor (also called inverse image functor) for sheaves, we will also need the \emph{direct image} functor, defined as $g_*F(U)=F(g^{-1}(U))$~\cite{MR0345092}.
The functor $g_*$ can be defined in the setting of groupoid-equivariant sheaves, and it is still right adjoint to the pullback functor (see~\cite{cramo:hom}*{Section 2.3} for details).
Let us note that, if $g$ is proper, then $g_*$ coincides with the functor $g_!$, called direct image functor with compact supports. In the familiar setting of topological spaces, $g_!$ is left adjoint to $g^*$ whenever the map $g$ is étale, however we emphasize that, in the case of groupoid actions considered here, the corresponding statement requires additional hypotheses~\cite{cramo:hom}*{Remark 5.2}.

\subsection{Triangulated category from groupoid equivariant \texorpdfstring{$\KK$}{KK}-theory}
\label{sec:triang-cat-from-groupoid-equiv}

Let us briefly summarize the contents of
\citelist{\cite{valmak:groupoid}\cite{valmak:groupoidtwo}} in order to provide the background for this paper.
We consider the equivariant Kasparov category $\KK^G$ for a second countable locally compact Hausdorff groupoid $G$ (in our case $G$ can be further assumed to be étale)~\cite{gall:kk}. The crucial point is that this category is triangulated, %~\cite{nee:tri},
which allows us to perform homological algebra constructions following the blueprint in~\cite{meyernest:tri}.

Let $H$ be an open subgroupoid of $G$ satisfying $H^{(0)}=G^{(0)}$.
Then we can define the induction functor $\Ind_H^G\colon \KK^H \to \KK^G$ as
\[
\Ind_H^G B = (C_0(G) \otimes_{C_0(G^{(0)})} B) \rtimes H,
\]
where $\otimes_{C_0(G^{(0)})}$ denotes the C$^*$-algebraic balanced tensor product over $C_0(G^{0})$.
This functor is left adjoint to the natural restriction functor $\Res_H^G\colon \KK^G \to \KK^H$.

Let us set $L=\Ind_H^G\Res_H^G$ and $P_n = L^{n+1} A$ for $n \ge 0$. The adjunction gives $L$ the structure of comonad, and by a standard argument $P_\bullet$ becomes an augmented simplicial object over $A$.
Applying the theory of projective resolutions in triangulated categories, as a special case of the ``ABC spectral sequence''~\cite{meyer:tri} for the functor $A \mapsto K_\bullet(G \ltimes A)$, we obtain a spectral sequence whose $E^2$-sheet shows the homology groups of the chain complex $K_\bullet(G\ltimes P_\bullet)$. The convergence is to $K_\bullet(G \ltimes P)$ for a naturally defined object $P$, which should be viewed as an approximation of $A$.

In broad terms, the object $P$ is constructed as follows: since the objects $P_n$ satisfy a certain projectivity condition, it is possible to iterate the construction of exact triangles
\begin{equation*}
	P_n \to N_n \to N_{n+1} \to P_n[1],
\end{equation*}
where we use the convention $A=N_0$ and $(-)[1]$ denotes suspension.
The $N_k$'s form an inductive system whose homotopy colimit we denote $N$.
Now the object $P$ is characterized as fitting a unique exact triangle
\begin{equation*}
	P\to A \to N \to P[1].
\end{equation*}

The assumption the stabilizers of $G$ are torsion-free implies that $K_\bullet(G\ltimes P)$ can be identified with the left-hand side of the Baum--Connes conjecture with coefficients in $A$, see~\cite{val:kthpgrp}.
The theorem below is a summary of the situation described so far.

\begin{theorem}[\cite{valmak:groupoid}*{Corollary 3.6}]\label{thm:spec-seq-gen}
Let $G$ be an ample groupoid with torsion-free stabilizers satisfying the Baum--Connes conjecture with coefficients.
Let $H$, $A$, and $P_n$ be as above.
There exists a homological spectral sequence 
\[
E^2_{p q} = H_p(K_q(G \ltimes P_\bullet)) \Rightarrow K_{p + q}(G \ltimes A).
\]
\end{theorem}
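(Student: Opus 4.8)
The plan is to present the statement as an instance of Meyer's ABC spectral sequence \cite{meyer:tri} for a relative projective resolution in the triangulated category $\KK^G$, and then to identify the abutment through the Baum--Connes conjecture; throughout I write $\Phi = K_\bullet(G \ltimes -)$ for the homological functor of interest. First I would set up the relevant homological algebra. Since $\Ind_H^G$ is left adjoint to $\Res_H^G$, the ideal $\mathcal{I} = \ker\bigl(\Res_H^G\colon \KK^G \to \KK^H\bigr)$ of morphisms vanishing after restriction is a homological ideal in the sense of Meyer--Nest, and it has enough projectives: the $\mathcal{I}$-projective objects are precisely the retracts of objects $\Ind_H^G B$. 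The counit of the adjunction makes $L = \Ind_H^G \Res_H^G$ a comonad, and the augmented simplicial object $P_\bullet \to A$ with $P_n = L^{n+1}A$ is its bar resolution. I would then verify that this is an $\mathcal{I}$-projective resolution of $A$: each $P_n$ is induced, hence $\mathcal{I}$-projective, and after applying $\Res_H^G$ the unit of the adjunction supplies an extra degeneracy, so that $\Res_H^G(P_\bullet \to A)$ is contractible; this contractibility is exactly the assertion that $P_\bullet \to A$ is $\mathcal{I}$-exact.

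Next I would carry out the cellular-tower construction recalled above. Using $\mathcal{I}$-projectivity of the $P_n$, I inductively build the exact triangles $P_n \to N_n \to N_{n+1} \to P_n[1]$ with $N_0 = A$, set $N = \operatorname{hocolim}_k N_k$, and define $P$ by the triangle $P \to A \to N \to P[1]$; this exhibits $P$ as the $\mathcal{I}$-cellular approximation of $A$ with $\mathcal{I}$-contractible complement $N$. Applying the stable homological functor $\Phi$ to the whole tower turns every triangle into a long exact sequence of $K$-groups, and assembling these produces an exact couple, hence a spectral sequence. Unwinding the construction, the $E^1$-page is $\Phi(P_\bullet)$ with $d^1$ the alternating sum of the simplicial face maps, so that $E^2_{pq} = H_p\bigl(K_q(G \ltimes P_\bullet)\bigr)$, and the tower is arranged so that the sequence abuts to $\Phi(P) = K_\bullet(G \ltimes P)$.

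It then remains to identify $K_\bullet(G \ltimes P)$ with $K_{\bullet}(G \ltimes A)$. By the stated consequence of the torsion-free-stabilizer hypothesis, the cellular approximation $P$ agrees with the projective approximation underlying the Dirac/dual-Dirac formulation of Baum--Connes, so $K_\bullet(G \ltimes P)$ is the left-hand side of the conjecture with coefficients in $A$; since $G$ satisfies Baum--Connes with coefficients, the assembly map $K_\bullet(G \ltimes P) \to K_\bullet(G \ltimes A)$ is an isomorphism, yielding the stated abutment $K_{p+q}(G \ltimes A)$. I expect this last step to be the main obstacle: the formal construction and the $E^2$-computation are essentially automatic once the pair $(\mathcal{I}, P_\bullet)$ is in place, whereas showing that the open-subgroupoid induction producing $P$ generates the same localizing subcategory as the torsion-free Baum--Connes approximation — and hence that $P$ computes the correct left-hand side — is where torsion-freeness is genuinely used. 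A secondary point to address is strong convergence of the spectral sequence, which one obtains from the bounded homological dimension in force, ensuring that $E^2_{pq}$ vanishes for large $p$.
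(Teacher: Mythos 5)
Your proposal follows essentially the same route as the paper: the adjunction $\Ind_H^G \dashv \Res_H^G$ yields the comonad $L$ and the bar resolution $P_\bullet$, which is an $\mathcal I$-projective resolution in the Meyer--Nest sense; the ABC spectral sequence / cellular tower then gives $E^2_{pq} = H_p(K_q(G \ltimes P_\bullet))$ converging to $K_\bullet(G \ltimes P)$; and torsion-freeness of the stabilizers together with Baum--Connes with coefficients (via the categorical Baum--Connes framework of B\"onicke--Proietti) identifies $K_\bullet(G \ltimes P)$ with $K_\bullet(G \ltimes A)$. You also correctly isolate the genuinely nontrivial step --- that the induction-from-$H$ approximation $P$ computes the Baum--Connes left-hand side --- which is exactly the point the paper delegates to the torsion-freeness hypothesis and the cited reference.
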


\begin{remark}\label{rem:spec-seq-gen-special-cases}
If we choose the subgroupoid $H$ to be the unit space $G^{(0)} \subset G$, the $E^1$-sheet of above spectral sequence becomes $E^1_{p q} = K_q(C_0(G^{(p)}) \otimes_{C_0(G^{(0)})} A)$.
If moreover we assume that $G$ is ample, the $E^2$-sheet can be identified with $H_p(G, K_q(A))$, the groupoid homology of $G$ with the coefficient $G$-sheaf corresponding to the $C_c(G, \Z)$-module $K_q(A)$~\cite{valmak:groupoid}*{Section 3}.
\end{remark}

\subsection{Smale spaces}
\label{sec:smale-sp}

A \emph{Smale space} is a compact metric space $X$ with a self-homeomorphism $\phi$ satisfying a certain hyperbolicity condition~\cite{ruelle:thermo}.
The most essential feature of Smale spaces is given by the existence of two equivalence relations on $X$, defined as follows: 
\begin{itemize} \item given $x,y\in X$, we say they are \emph{stably equivalent} (denoted $x\sim_s y$) if
\[
\lim_{n\to \infty} d(\phi^{n}(x),\phi^{n}(y)) = 0;
\]
\item given $x,y\in X$, we say they are \emph{unstably equivalent} (denoted $x\sim_u y$) if
\[
\lim_{n\to \infty} d(\phi^{-n}(x),\phi^{-n}(y)) = 0.
\] 
\end{itemize}

The graph of the unstable equivalence relation has a structure of locally compact groupoid with a Haar system~\cite{put:algSmale}, that we denote by $R^u(X, \phi)$.
Note that $R^s(X,\phi)=R^u(X,\phi^{-1})$ and $R^s(X,\phi)=R^s(X,\phi^k)$ for any $k\geq 1$.
The orbits of $R^s(X,\phi)$ are referred to as \emph{stable sets}.
Following the construction detailed in~\cite{put:spiel}, we obtain an étale groupoid by restricting $R^u(X,\phi)$ to an appropriate subspace contained in a finite union of stable sets.

\emph{We assume that a Smale space $(X,\phi)$ is non-wandering}, in the following sense.
\begin{definition}\label{def:nwirrmix}
A dynamical system $(X,\phi)$ is said to be 
\begin{itemize}
\item \emph{non-wandering} if for any $x\in X$ and every open set $U$ containing $x$, there is a positive integer $N$ such that $\phi^N(U) \cap U$ is non-empty;
\item \emph{irreducible} if, for every (ordered) pair of non-empty open sets $U, V$, there is a positive integer $N$ such that $\phi^N(U) \cap V$ is non-empty;
\item \emph{mixing} if for every (ordered) pair of non-empty open sets $U, V$, there is a positive integer $N$ such that
$\phi^n(U)\cap V$ is non-empty for all $n \geq N$.
\end{itemize}
\end{definition}

Given a finite directed graph $\cG = (\cG^0, \cG^1, i, t \colon \cG^1 \to \cG^0)$, the associated \emph{shift of finite type} is the Smale space with underlying metric space
\[
\Sigma_\cG = \{e = (e_k)_{k \in \Z} \in (\cG^1)^\Z \mid t(e_k) = i(e_{k + 1})\}
\]
and homeomorphism given by $\sigma(e)_k = e_{k+1}$.

A Smale space $(X, \phi)$ is isomorphic to a shift of finite type as above if and only if $X$ is totally disconnected, and $\phi$ is furthermore irreducible if and only if $\cG$ is connected (see~\cite{put:HoSmale}*{Theorem 2.2.8}).
When $(\Sigma, \sigma)$ is a shift of finite type, the groupoid $R^u(\Sigma, \sigma)$ is AF.
Moreover, the $K_0$-group of $C^* R^u(\Sigma, \sigma)$ is isomorphic to the \emph{stable dimension group}
$D^s(\Sigma, \sigma)$ introduced by Krieger~\cite{krieger:inv}, which can be described using stable sets of $(\Sigma, \sigma)$ (this is explained in detail in~\cite{put:HoSmale}*{Chapter 3}).
The $K_1$-group of $C^* R^u(\Sigma, \sigma)$ is trivial.

\subsection{Homology of Smale spaces}
\label{sec:homolog-Sm-sp}

The main reference for homology of Smale spaces is~\cite{put:HoSmale}.
Let $(X, \phi)$ be a non-wandering Smale space.
An \emph{$s/u$-bijective pair} over $(X, \phi)$ is given by a Smale space $(Y, \psi)$ with totally disconnected unstable sets and an $s$-bijective map $f \colon Y \to X$, and another Smale space $(Z, \zeta)$ with totally disconnected stable sets and a $u$-bijective map $g\colon Z \to X$.
Given such data, we set
\begin{align*}
Y_n &= \underbrace{Y \times_X \dots \times_X Y}_{(n+1) \times},&
\psi_n &= (\psi \times \dots \times \psi)|_{Y_n}.
\end{align*}
Then $(Y_n, \psi_n)$ is again a Smale space with totally disconnected unstable sets.
We denote the restriction of $f \times \dots \times f$ to $Y_n$ by $f_n$.
This map is again $s$-bijective~\cite{put:notes}*{Theorem 5.2.15}.
Similarly, we define a Smale space with totally disconnected stable sets,
\begin{align*}
Z_n &= \underbrace{Z \times_X \dots \times_X Z}_{(n+1) \times},&
\zeta_n &= (\zeta \times \dots \times \zeta)|_{Z_n},
\end{align*}
together with the $u$-bijective map $g_n\colon Z_n \to X$.
Then we have that
\begin{align*}
\Sigma_{L,M} &= Y_L \times_X Z_M, &
\sigma_{L,M} &= (\psi_L \times \zeta_M)|_{\Sigma_{L,M}}
\end{align*}
is a Smale space with totally disconnected underlying space.
Thus it is a shift of finite type.

By definition, the \emph{stable homology} of $(X, \phi)$, denoted by $H^s_\bullet(X, \phi)$, is the homology of the double complex with terms $D^s(\Sigma_{L,M}, \sigma_{L,M})$ for $L, M \ge 0$~\citelist{\cite{put:HoSmale}\cite{val:smale}}.
This construction is independent of the chosen $s/u$-bijective pair and it is functorial for $s$-bijective maps.

We will also work with a normalized double complex originally introduced in~\cite{put:HoSmale}*{Chapter 4}: the group $D^s(\Sigma_{L,M}, \sigma_{L,M})$ admits a natural action of $S_{L + 1} \times S_{M + 1}$.
We will use the notation $D^s(\Sigma_{L,M}, \sigma_{L,M})_{,A}$ to denote the subgroup of elements $x$ satisfying $s x = (-1)^{\absv{s}} x$ for $s \in S_{M + 1}$.
As a simple but useful remark, notice that when the map $g \colon Z \to X$ is at most $N$-to-$1$, the group $D^s(\Sigma_{L,M}, \sigma_{L,M})_{,A}$ is trivial for $M > N$.

Now we recall a useful result from~\cite{put:funct}*{Corollary 3.5}.
\begin{proposition}\label{prop:put35}
      Consider Smale spaces $(X,\phi)$ and $(Y,\psi)$, and an $s$-bijective map $g\colon (Y,\psi) \to (X,\phi)$.
      We then have étale groupoids $G=R^u(X,\phi)|_T$ and $H=R^u(Y,\psi)|_{T_0}$ for appropriate transversals $T \subset X$ and $T_0 \subset Y$ (see Assumption~\ref{ass:tr}), and such that $g(T_0) = T$ and $H$ embeds as an open subgroupoid of $G$.% The details for this embedding are given in~\cite{put:funct}*{Corollary 3.5}.   
\end{proposition}

The previous proposition can be applied for example when $X$ has totally disconnected stable sets and $(Y,\psi)=(\Sigma,\sigma)$ is the shift of finite type associated to a Markov partition of $X$~\cite{bs:dynbook}*{Section 5.12}.
Then the construction of Section~\ref{sec:triang-cat-from-groupoid-equiv} gives the following.

\begin{theorem}[\cite{valmak:groupoidtwo}*{Theorem 3.9}]\label{thm:spec-seq-Smale-tot-discon-st-set}
Let $(X,\phi)$ be a non-wandering Smale space with totally disconnected stable sets.
We then have a homological spectral sequence
\[
E^2_{p q} = E^3_{p q} = H^s_p(X, \phi) \otimes K_q(\bC) \Rightarrow K_{p + q}(C^*R^u(X,\phi)).
\]
\end{theorem}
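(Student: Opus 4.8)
The plan is to specialize the triangulated-categorical machinery recalled in Theorem~\ref{thm:spec-seq-gen}. I would apply it to the ample groupoid $G = R^u(X,\phi)|_T$ with trivial coefficient algebra $A = C_0(G^{(0)})$, so that $G \ltimes A = C^* G$ is Morita equivalent to $C^* R^u(X,\phi)$, using as the open subgroupoid \emph{not} the unit space but the AF groupoid $H = R^u(\Sigma,\sigma)|_{T_0}$ arising from the $s$-bijective map $g \colon (\Sigma,\sigma) \to (X,\phi)$. The first step is to check the hypotheses of Theorem~\ref{thm:spec-seq-gen}: as an equivalence relation groupoid, $R^u(X,\phi)$ is principal, so its stabilizers are trivial and in particular torsion-free; and Smale space groupoids are amenable, hence satisfy the Baum--Connes conjecture with coefficients, which is exactly what identifies the abutment with the genuine $K$-groups $K_{p+q}(C^* R^u(X,\phi))$.

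The heart of the argument is the computation of the $E^1$-page $E^1_{pq} = K_q(G \ltimes P_p)$, where $P_p = L^{p+1} A$ for $L = \Ind_H^G \Res_H^G$. Using the adjunction between $\Ind_H^G$ and $\Res_H^G$ together with the compatibility of the induction functor with crossed products, I would identify $G \ltimes L^{p+1} A$, up to Morita equivalence, with the C$^*$-algebra of the iterated fibered product groupoid $H^{\times_G (p+1)}$ (cf.~Proposition~\ref{prop:triv-mult-pllback}). The crucial geometric input, established in our earlier work, is that $H^{\times_G(p+1)}$ is Morita equivalent to the transversal reduction of $R^u(Y_p,\psi_p)$, where $Y_p$ denotes the $(p+1)$-fold fibered product of $\Sigma$ over $X$; since $\Sigma$ is totally disconnected and $f$ is $s$-bijective, $Y_p$ is again a shift of finite type. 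Because the $K$-theory of a shift-of-finite-type groupoid C$^*$-algebra is the stable dimension group $D^s(Y_p,\psi_p)$ in degree zero and vanishes in degree one, this yields $E^1_{pq} \cong D^s(Y_p,\psi_p) \otimes K_q(\bC)$.

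Next I would verify that the $d^1$-differential is Putnam's boundary map. Unwinding the comonad structure on $P_\bullet$, the simplicial face maps are induced by the coordinate projections $Y_p \to Y_{p-1}$ of the fibered products, which are precisely the maps Putnam uses to build the complex computing $H^s_\bullet(X,\phi)$ (taking $(Z,\zeta) = (X,\phi)$, so that only the $Y$-direction contributes and $\Sigma_{p,0} = Y_p$). Hence $E^2_{pq} = H^s_p(X,\phi) \otimes K_q(\bC)$. The equality $E^2 = E^3$ is then forced by parity: since $K_\bullet(\bC)$ is concentrated in even degrees, the differential $d^2 \colon E^2_{pq} \to E^2_{p-2,\,q+1}$ always lands in a vanishing odd-degree entry.

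I expect the main obstacle to lie in the second step, namely establishing the Morita equivalence $G \ltimes L^{p+1} A \sim C^* R^u(Y_p,\psi_p)$ coherently in $p$ and matching the resulting simplicial structure with the fibered-product structure of the $Y_p$. This requires careful control of the interplay between the induction/restriction functors, the balanced tensor products over $C_0(G^{(0)})$, and the passage to transversals, and the identification of $d^1$ with Putnam's boundary is delicate precisely because it must be natural with respect to both the $\KK^G$-theoretic and the dynamical ($s$-bijective factor map) structures at once.
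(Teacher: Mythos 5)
Your proposal is correct and takes essentially the same route as the paper: this statement is recalled from Part II of the series (\cite{valmak:groupoidtwo}*{Theorem 3.9}), and its proof there is precisely the specialization of Theorem~\ref{thm:spec-seq-gen} to the ample groupoid $G = R^u(X,\phi)|_T$ with the AF open subgroupoid $H = R^u(\Sigma,\sigma)|_{T_0}$ coming from an $s$-bijective factor map, together with the identification of $G \ltimes P_p$ (up to Morita equivalence) with the groupoid C$^*$-algebras of the shifts of finite type $Y_p$, the matching of $d^1$ with Putnam's boundary, and the parity argument giving $E^2 = E^3$. The only cosmetic slip is the inconsistent naming of the factor map ($g$ versus $f$), which does not affect the argument.
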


In fact, Putnam's homology agrees with the groupoid homology:

\begin{theorem}[\cite{valmak:groupoidtwo}*{Theorem 4.1}]\label{thm:grpd-hmlg-compar-tot-disconn}
With $(X, \phi)$ and $G$ as above, we have 
\[ H^s_p(X, \phi) \cong H_p(G, \sfZ).\]
\end{theorem}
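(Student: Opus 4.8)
The plan is to realize both sides as the homology of one and the same double complex built from the iterated fibre products $Y_\bullet$, exploiting that at each simplicial level the relevant unstable groupoid is AF.

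First I would fix a convenient model for Putnam's homology. Since $(X,\phi)$ has totally disconnected stable sets, the identity map exhibits $(Z,\zeta)=(X,\phi)$ as the $u$-bijective half of an $s/u$-bijective pair, while $(Y,\psi)=(\Sigma,\sigma)$ with $f=g$ serves as the $s$-bijective half. For this choice $Z_M$ is the diagonal copy of $X$ for every $M$, so $\Sigma_{L,M}=Y_L$ independently of $M$, and the symmetric group $S_{M+1}$ acts trivially on $D^s(\Sigma_{L,M})=D^s(Y_L)$. Because dimension groups are torsion-free, the alternating subgroup $D^s(\Sigma_{L,M})_{,A}$ vanishes for $M\geq 1$, so the normalized double complex collapses onto its $M=0$ column. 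Hence $H^s_\bullet(X,\phi)$ is the homology of the single complex $L\mapsto D^s(Y_L)$, with differential the alternating sum of the maps induced on $D^s$ by the face maps $Y_L\to Y_{L-1}$.

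Next I would produce a resolution of $\sfZ$ on the groupoid side. The restriction $\pi=g|_{T_0}\colon T_0\to T$ is a finite surjective local homeomorphism between totally disconnected spaces, equivariant for the unstable equivalence relation, and its iterated fibre powers $T_0\times_T\cdots\times_T T_0$ are the transversals $T_L$ of the Smale spaces $Y_L$. The associated bar (Čech) complex of $G$-sheaves
\[
\cdots \to \pi_{2!}\sfZ \to \pi_{1!}\sfZ \to \pi_{0!}\sfZ \to \sfZ \to 0,
\]
where $\pi_{L!}$ denotes extension by zero along the étale map $T_L\to T$, is exact because $\pi$ is a finite surjective étale map. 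Since $G$ is ample its unit space $T$ is totally disconnected, hence of cohomological dimension zero, so each $\pi_{L!}\sfZ$ is $\Gamma_c$-acyclic and the hyperhomology $\bH_\bullet(G,\pi_{\bullet!}\sfZ)$ is computed by the double complex $\Gamma_c(G^{(i)},s^*\pi_{L!}\sfZ)$. Exactness of the resolution then gives $H_\bullet(G,\sfZ)\cong \bH_\bullet(G,\pi_{\bullet!}\sfZ)$.

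I would then evaluate this hyperhomology through the spectral sequence of the double complex, whose first page is $E^1_{L,q}=H_q(G,\pi_{L!}\sfZ)$. A Shapiro-type identification for ample groupoids rewrites $H_q(G,\pi_{L!}\sfZ)$ as $H_q(G\ltimes T_L,\sfZ)$ for the transformation groupoid $G\ltimes T_L$. The geometric core is that, because $g_L\colon Y_L\to X$ is $s$-bijective, this transformation groupoid is Morita equivalent to $R^u(Y_L)|_{T_L}$, which is AF since $Y_L$ is a shift of finite type. By Proposition \ref{prop:af-grpd-triv-homol} the groups $H_q(G,\pi_{L!}\sfZ)$ vanish for $q>0$, while $H_0(R^u(Y_L)|_{T_L},\sfZ)\cong D^s(Y_L)$. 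The spectral sequence therefore collapses onto the row $q=0$, leaving the complex $L\mapsto D^s(Y_L)$ with the simplicial differential; matching this with the face maps of the first step yields $H_p(G,\sfZ)\cong H^s_p(X,\phi)$.

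The main obstacle is the geometric identification in the last step: verifying that $G\ltimes T_L$ is Morita equivalent to the unstable AF groupoid $R^u(Y_L)|_{T_L}$ and that the face maps induce on $H_0$ exactly Putnam's maps on the dimension groups. This is where the $s$-bijectivity of $g_L$ is essential — it is what allows unstable equivalences in $X$ to be lifted through $g_L$ — so this is the step that genuinely uses the dynamics rather than formal homological algebra. Establishing exactness of the bar resolution and the Shapiro identification, by contrast, I expect to be routine once it is noted that every space in sight is totally disconnected.
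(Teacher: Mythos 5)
There is a genuine gap, and it sits exactly at the step you flagged as the ``geometric core'': you have the two halves of Putnam's duality backwards. For an $s$-bijective map $f\colon (\Sigma,\sigma)\to (X,\phi)$ it is \emph{stable} equivalences that lift uniquely through $f$; unstable equivalences in general lift neither uniquely nor canonically. Consequently $T_0=f^{-1}(T)$ is \emph{not} a $G$-space for $G=R^u(X,\phi)|_T$: the transformation groupoids $G\ltimes T_L$ in your Shapiro step do not exist, and the sheaves $\pi_{L!}\sfZ$ carry no $G$-equivariant structure, so the hyperhomology $\bH_\bullet(G,\pi_{\bullet!}\sfZ)$ is not even defined. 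This cannot be patched: a $G$-sheaf must have isomorphic stalks at unstably equivalent points, whereas the stalk of $\pi_{0!}\sfZ$ at $x$ is $\Z[f^{-1}(x)]$, whose rank is constant along stable classes (by $s$-bijectivity) but jumps along unstable classes. Concretely, for the dyadic solenoid covered by the full $2$-shift via binary expansion --- a Smale space with totally disconnected stable sets, squarely within the scope of the theorem --- a generic point of an unstable class has one preimage while the ``carry'' points of the same class have two, and those two preimages are unstably equivalent to each other. It is $u$-bijectivity, not $s$-bijectivity, that produces totally disconnected $G$-spaces fibered over $T$; this is precisely the dichotomy the paper emphasizes (compare Sections \ref{sec:sof-res-from-u-bij-map} and \ref{sec:comp-hom}, where the $G$-space structure on $T_n\subset Z_n$ is proved using that $g_n$ is $u$-bijective).

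The correct mechanism --- used both in the cited proof of \cite{valmak:groupoidtwo}*{Theorem 4.1} and in the proof of its generalization, Theorem \ref{thm:compar-homologies}, which specializes to this case by taking $(Z,\zeta)=(X,\phi)$ --- is groupoid-theoretic rather than space-theoretic: the $s$-bijective map makes $H=R^u(\Sigma,\sigma)|_{T_0}$ an \emph{open subgroupoid} of (a Morita-equivalent pullback of) $G$ with the same unit space, which requires the transversality statement \cite{valmak:groupoidtwo}*{Proposition 2.9} that unstably equivalent images can be corrected to come from unstably equivalent preimages. One then resolves $\sfZ$ by the complex of $G$-sheaves $\cL (f_j)_! \sfZ$ obtained from the derived induction (comma-groupoid) construction along the inclusions $H^{\times_G (j+1)}\to G^{\times_G (j+1)}$, proves exactness by a stalkwise contracting homotopy, and uses Shapiro in the form $\bH_\bullet(G,\cL (f_j)_! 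F)\cong H_\bullet(H_j,F)$. Note that the stalk of the resulting degree-zero sheaf at $x$ identifies unstably equivalent preimages (roughly $\Z[f^{-1}(x)/\!\sim_u]$), which is exactly the identification your extension-by-zero sheaves miss. From that point on your outline matches the paper's: $H_j$ is Morita equivalent to $R^u(Y_j,\psi_j)|_{T_j}$, which is AF, so Proposition \ref{prop:af-grpd-triv-homol} kills the higher homology, and $H_0\cong D^s(Y_j)$ recovers Putnam's complex. Your reduction of Putnam's homology to the single complex $(D^s(Y_L))_L$, and the AF-vanishing endgame, are correct and agree with the paper; the flaw is confined to, but fatal for, the construction of the equivariant resolution.
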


\section{Symmetric simplicial spaces}
\label{sec:symm-simp-sp}

Let us start with some definitions and conventions.
Let $A_\bullet = (A_n)_{n=0}^\infty$ be a \emph{simplicial topological space}, that is, a simplicial object in the category of topological spaces.
Its \emph{thin geometric realization} is defined to be
\[
\absv{A_\bullet} = \left( \coprod_{n = 0}^\infty \Delta^n \times A_n \right) / \sim,
\]
where $\Delta^n$ is the standard $n$-simplex, and $\sim$ is the equivalence relation induced by the structure maps (the degeneracy and face maps)~\cite{MR232393}*{Section 2}.

Next further assume that $A_\bullet$ is a \emph{symmetric simplicial space}, or equivalently, an $S_\bullet$-space for the \emph{crossed simplicial group} $S_\bullet$~\cite{MR998125}, i.e., a simplicial space with compatible actions of $S_{n+1}$ on $A_n$.
Alternatively, we can think of $A_\bullet$ as a contravariant functor on the category of (unordered) finite sets.
Then we can form a ``naive'' geometric realization $\nvGR{A_\bullet}$, as the quotient of $\absv{A_\bullet}$ with respect to the orbit relations for the diagonal action of $S_{n+1}$ on $\Delta^n \times A_n$, for all $n$.
Moreover, we denote the image of $\Delta^n \times A_n$ in $\nvGR{A_\bullet}$ by $\nvGR{A_\bullet}^{(n)}$.

\subsection{Adaptation of Segal's work}
\label{sec:adapt-segal}
In this section we leverage Segal's work~\cite{MR232393} on simplicial spaces and spectral sequences to obtain cohomological approximation results in the setting of groupoids and dynamical systems.

Let $T$ be a locally compact Hausdorff space, and $g\colon T_0 \to T$ be a proper continuous surjective map from a locally compact space $T_0$.
Then we obtain an $S_\bullet$-space $T_\bullet$ by setting $T_n$ to be the $(n+1)$-fold fiber product of $T_0$ over $T$.
Let us also denote by $T_n^f$ the subset of points $(x_0, \dots, x_n)$ such that $x_i \neq x_j$ for $i \neq j$, i.e.~the union of free $S_{n+1}$-orbits in $T_n$.
This is an open subset of $T_n$, and $\nvGR{T_\bullet}$ is (as a set) the disjoint union of the images of $\mathring{\Delta}^n \times T_n^f$ for different $n$.

In the rest of the section we assume that the map $g$ is at most $N$-to-one, for some $N$.
Under this assumption on $g$, we also have $\nvGR{T_\bullet} = \nvGR{T_\bullet}^{(N)}$.

\begin{proposition}
The space $\nvGR{T_\bullet}$ is a locally compact and Hausdorff topological space.
Moreover, the induced map $\tilde g \colon \nvGR{T_\bullet} \to T$ is proper.
\end{proposition}

\begin{proof}
Consider the space $Y$ of bounded positive linear functionals $\phi$ on the C$^*$-algebra $C_0(T_0)$, which are contractive, i.e., the ones satisfying $0 \le \phi(f)$ for nonnegative functions $f$ and $\phi(f) \le \left\| f \right\|$ for all $f$.
Under the weak topology for the natural pairing with $C_0(T_0)$, this is a compact Hausdorff space.

As a subspace of $Y$, we consider
\[
Y' = \{ \phi \in Y \mid \exists t \in T\; \forall f \in C_0(T) \colon \phi(g^* f) = f(t) \}.
\]
We are going to show that $Y'$ is a model of $\nvGR{T_\bullet}$, with the claimed properties.

Let us first check that $Y'$ is a locally closed subspace of $Y$, which would imply that $Y'$ is locally compact and Hausdorff.
Given a point $\phi \in Y'$, let us choose $f \in C_0(T)$ such that $\phi(g^* f) = 1$.
We define an open neighborhood $U$ of $\phi$ in $Y$ by
\[
U = \{ \psi \in Y \mid \absv{\psi(f) - 1} < 1 \},
\]
and a closed set $C \subset Y$ by
\[
C = \{ \psi \in Y \mid \forall f_1, f_2 \in C_0(T)\colon \psi(g^*(f_1 f_2)) = \psi(g^* f_1) \psi(g^* f_2) \}.
\]
Then we have $U \cap C = U \cap Y'$, which implies the claim.
Indeed, the defining condition of $C$ says that any $\psi \in U \cap C$ has an associated point $t_\psi \in T \cup \{ \infty \}$ such that $\psi(g^* f) = f(t_\psi)$ holds for all $f \in C_0(T)$, where we formally put $f(\infty) = 0$.
The defining condition for $U$ forbids $t_\psi = \infty$, hence we obtain the claim.

Let us fix $t \in T$, and consider
\[
Y'_t = \{ \phi \in Y' \mid \forall f \in C_0(T) \colon \phi(g^* f) = f(t) \}.
\]
This is a compact affine subspace of $Y$, and its extreme points are the evaluation functionals $\delta_{x}$ for $x$ in the fiber $g^{-1}(t) \subset T_0$.
By our assumption $g^{-1}(t)$ is a finite set, of cardinal at most $N$.
Thus, such a $\phi$ can be written as a convex combination
\[
\phi = \sum_{x \in g^{-1}(t)} a_{x} \delta_{x}.
\]

Now, let us write the standard simplex $\Delta^n$ as
\[
\Delta^n = \{ (a_0, \dots, a_n) \in \R^{n+1} \mid \sum_i a_n = 1, a_i \ge 0 \}.
\]
Then the map
\[
T_n \times \Delta^n \to Y', \quad ((x_0, \dots, x_n), (a_0, \dots, a_n)) \mapsto \sum_{i = 0}^n a_i \delta_{x_i}
\]
is continuous, and for different $n$, these maps are compatible with the gluing conditions for the naive realization.
This shows that there is a continuous map $h \colon \nvGR{T_\bullet} \to Y'$.

By a routine argument, we see that the map $h$ is bijective, hence $\nvGR{T_\bullet}$ is a Hausdorff topological space.
It remains to show that $h$ is a homeomorphism, and that $\tilde{g}$ is proper.

Let us fix $t \in T$, and its compact neighborhood $K \subset T$.
Then $K_0 = g^{-1}(K)$ is a compact subset of $T_0$ by the properness of $g$, and the same holds for $K_n = g_n^{-1}(K) \subset T_n$ for the induced map $g_n \colon T_n \to T$.
Then $\tilde{g}^{-1}(K)$ is a compact neighborhood of $\tilde{g}^{-1}(t)$ in $\nvGR{T_\bullet}$, being a quotient of the disjoint union $K_0 \coprod \dots \coprod K_N$, which is a compact space.
Since any continuous bijection from a compact space to a Hausdorff space is a homeomorphism, we obtain that $h$ restricts to a homeomorphism from $\tilde{g}^{-1}(K)$ to its image in $Y'$.
This shows that $h$ is indeed a homeomorphism.
Moreover, since we checked that $\tilde{g}^{-1}(K)$ is compact for compact $K$, $\tilde{g}$ is proper.
\end{proof}

\begin{proposition}\label{prop:sheaf-coh-emb-to-skel}
We have an isomorphism
\[
H^n(T, \sfZ) \cong H^n(\nvGR{T_\bullet}, \sfZ)
\]
induced by $\tilde g$.
\end{proposition}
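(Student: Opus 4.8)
The plan is to study the proper surjection $\tilde g \colon \nvGR{T_\bullet} \to T$ one fiber at a time and then feed the outcome into the Leray spectral sequence. Since $\tilde g$ is proper (and the spaces involved are locally compact Hausdorff), the proper base change theorem for sheaf cohomology identifies the stalk of the higher direct image at $t \in T$ with the cohomology of the fiber,
\[
(R^q \tilde g_* \sfZ)_t \cong H^q(\tilde g^{-1}(t), \sfZ).
\]
Because the naive realization is formed fiberwise over $T$, the fiber $\tilde g^{-1}(t)$ is exactly the naive realization $\nvGR{F^{\bullet + 1}}$ of the symmetric simplicial space attached to the finite set $F = g^{-1}(t)$, where the $n$-th term $F^{n+1}$ carries the permutation action of $S_{n+1}$. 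So everything reduces to understanding these finite fibers.

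The combinatorial heart of the argument is to show that each fiber is contractible; in fact I claim it is homeomorphic to a standard simplex. Write $\absv{F} = k \le N$. In the thin realization a simplex $(x_0, \dots, x_n) \in F^{n+1}$ is degenerate precisely when two consecutive labels coincide, and the diagonal $S_{n+1}$-action permutes the labels freely. Hence, whenever a label is repeated, a suitable permutation brings two equal labels into adjacent position and the class collapses to a lower-dimensional one; iterating, every point of $\nvGR{F^{\bullet + 1}}$ is represented by a tuple with pairwise distinct labels, that is, by a subset $A \subseteq F$ together with weights $t_x > 0$ for $x \in A$ summing to one. Sending such a representative to the probability measure $\sum_{x \in A} t_x\, \delta_x$ on $F$ identifies $\nvGR{F^{\bullet + 1}}$ with the simplex of probability measures on $F$, which is homeomorphic to $\Delta^{k-1}$. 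In particular the fiber is contractible, so $H^0(\tilde g^{-1}(t), \sfZ) = \Z$ and $H^q(\tilde g^{-1}(t), \sfZ) = 0$ for $q > 0$.

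Combining the two steps, $R^q \tilde g_* \sfZ = 0$ for $q > 0$, while $R^0 \tilde g_* \sfZ = \tilde g_* \sfZ = \sfZ$, since $\tilde g$ is proper with connected (indeed simplicial) fibers, so that the unit $\sfZ \to \tilde g_* \sfZ$ is a stalkwise isomorphism. Therefore the Leray spectral sequence
\[
E_2^{p q} = H^p(T, R^q \tilde g_* \sfZ) \Rightarrow H^{p+q}(\nvGR{T_\bullet}, \sfZ)
\]
degenerates on the second page onto the row $q = 0$, and its edge homomorphism is exactly $\tilde g^{\ast}$, yielding the claimed isomorphism $H^n(T, \sfZ) \cong H^n(\nvGR{T_\bullet}, \sfZ)$. (Equivalently, this is a Vietoris--Begle mapping argument for $\tilde g$.)

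The step I expect to demand the most care is the fiber identification: one must verify that collapsing degenerate simplices interacts correctly with the symmetric quotient, so that no spurious identifications arise and the weights genuinely assemble into a single $(k-1)$-simplex, and that these homeomorphisms are controlled uniformly enough in $t$ to legitimately invoke proper base change. The hypothesis that $g$ is at most $N$-to-one, which gives $k \le N$ and $\nvGR{T_\bullet} = \nvGR{T_\bullet}^{(N)}$, is precisely what keeps the fibers compact finite-dimensional simplices and $\tilde g$ proper, as the base-change and Leray formalism require.
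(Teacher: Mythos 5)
Your proof is correct and follows essentially the same route as the paper: proper base change to compute the stalks of $R^q \tilde g_*\sfZ$, identification of each fiber $\tilde g^{-1}(t)$ with a standard simplex (hence contractible), and collapse of the Leray spectral sequence onto the row $q=0$. The only difference is one of exposition: you spell out the combinatorial identification of $\nvGR{F^{\bullet+1}}$ with the probability simplex on $F=g^{-1}(t)$, which the paper simply asserts ($\tilde g^{-1}(x)\cong\Delta^m$), and your argument there is sound.
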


\begin{proof}
For a topological space $X$, let $D^+(X)$ be the derived category of cochain complexes of sheaves of commutative groups which are bounded below up to quasi-isomorphisms.
We thus have the right derived functor $\cR \tilde g_*\colon D^+(\nvGR{T_\bullet}) \to D^+(T)$ of the direct image functor $\tilde g_*$.
We also have the (term-wise application of) inverse image functor $\tilde g^*\colon D^+(T) \to D^+(\nvGR{T_\bullet})$.

Since $\sfZ_{\nvGR{T_\bullet}} =\tilde g^* \sfZ_T$, we have a natural morphism $\sfZ_T \to \cR \tilde g_* \sfZ_{\nvGR{T_\bullet}}$ in $D^+(T)$ induced by the adjunction between $\cR \tilde g_*$ and $\tilde g^*$.

We first claim that $R^n \tilde g_* \sfZ_{\nvGR{T_\bullet}} = 0$ for $0 < n$.
By the properness of $\tilde g$, for any $x \in T$ we have
\[
(R^n \tilde g_* \sfZ_{\nvGR{T_\bullet}})_x = H^n(\tilde g^{-1}(x), \sfZ)
\]
from the proper base change theorem, see~\cite{sga4-vbis}*{Corollaire 4.1.2}.
Here $\tilde g^{-1}(x)$ is homeomorphic to $\Delta^m$ with $m = \absv{g^{-1}(x)}$, hence the $n$-th cohomology group on the right hand side must vanish.
This also shows $\tilde g_* \sfZ_{\nvGR{T_\bullet}} = \sfZ_T$.

Now, we have the Leray spectral sequence
\[
E_2^{p, q} = H^p(T, R^q \tilde g_* \sfZ_{\nvGR{T_\bullet}}) \Rightarrow H^{p+q}(\nvGR{T_\bullet}, \sfZ_{\nvGR{T_\bullet}}).
\]
By the above claim we have $E_2^{p, q} = 0$ for $0 < q$, and $E_2^{p, 0} = H^p(T, \sfZ_T)$.
As the higher differentials are $d_r^{p, q}\colon E_r^{p, q} \to E_r^{p+r,q-r+1}$, we have $E_2^{p, 0} = E_\infty^{p, 0}$ and $E_\infty^{p,q} = 0$ for $q > 0$.
This implies
\[
H^n(\nvGR{T_\bullet}, \sfZ_{\nvGR{T_\bullet}}) \cong E^{n,0}_\infty \cong H^n(T, \sfZ_T),
\]
as claimed.
\end{proof}

\begin{proposition}\label{prop:naive-geom-realiz-K-group-isom}
We have $K^\bullet(\nvGR{T_\bullet}) \cong K^\bullet(T)$ induced by $\tilde g$.
\end{proposition}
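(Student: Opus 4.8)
The plan is to run the same argument as in the proof of Proposition \ref{prop:sheaf-coh-emb-to-skel}, but with the topological $K$-theory spectrum $\KU$ in place of the constant sheaf $\sfZ$. Concretely, I would compute $K^\bullet(T)$ and $K^\bullet(\nvGR{T_\bullet})$ as the hypercohomology of the constant sheaf of $\KU$-spectra and compare the two via the proper pushforward $\cR\tilde g_*$ along $\tilde g$, exactly as the natural morphism $\sfZ_T \to \cR\tilde g_* \sfZ_{\nvGR{T_\bullet}}$ was used before.

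First I would record the input that makes everything go through: by the computation inside the proof of Proposition \ref{prop:sheaf-coh-emb-to-skel}, every fiber $\tilde g^{-1}(x)$ is homeomorphic to a standard simplex $\Delta^m$ with $m = \absv{g^{-1}(x)}$, hence is contractible. Since $\tilde g$ is proper, the proper base change theorem computes the stalks of the derived pushforward of the $\KU$-spectrum sheaf as $K^\bullet(\tilde g^{-1}(x)) = K^\bullet(\Delta^m) = K^\bullet(\mathrm{pt})$; contractibility of the fibers therefore upgrades the vanishing $R^n\tilde g_*\sfZ_{\nvGR{T_\bullet}} = 0$ (for $n>0$) from before to the statement that the unit $\underline{\KU}_T \to \cR\tilde g_*\,\tilde g^* \underline{\KU}_T$ is an equivalence of sheaves of spectra. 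Taking hypercohomology over $T$ then yields $K^\bullet(T) \cong K^\bullet(\nvGR{T_\bullet})$, and by construction this isomorphism is induced by $\tilde g^*$.

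Equivalently, and perhaps more transparently for bookkeeping, I would set up the local-to-global (Atiyah--Hirzebruch type) spectral sequence
\[
E_2^{p,q} = H^p(X, \underline{K^q(\mathrm{pt})}) \Rightarrow K^{p+q}(X)
\]
for $X \in \{T, \nvGR{T_\bullet}\}$; both converge because the cohomological dimension of $T$ is bounded by $N$ and $\nvGR{T_\bullet} = \nvGR{T_\bullet}^{(N)}$ is finite dimensional. The map $\tilde g^*$ gives a morphism between the two spectral sequences. On the $E_2$-page the coefficient sheaf $\underline{K^q(\mathrm{pt})}$ is $\sfZ$ for $q$ even and $0$ for $q$ odd, so Proposition \ref{prop:sheaf-coh-emb-to-skel} shows that $\tilde g^*$ is an isomorphism on every $E_2$-entry. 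By the comparison theorem for bounded (hence convergent) spectral sequences, $\tilde g^*$ is then an isomorphism on the abutments $K^\bullet$.

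The main obstacle is foundational rather than computational: one must make sense of either the $\KU$-spectrum hypercohomology description of operator $K$-theory or the Atiyah--Hirzebruch spectral sequence for the possibly non-CW, merely locally compact spaces $T$ and $\nvGR{T_\bullet}$, together with the naturality of this machinery under $\tilde g^*$ and its convergence. This is precisely the point where a CW-free framework adapted from Segal's work is needed; once the spectral sequence is in place, the contractibility of the fibers $\tilde g^{-1}(x)\cong\Delta^m$ does all the remaining work, just as in the sheaf-cohomological case.
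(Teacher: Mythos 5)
Your second (``equivalently'') formulation is in essence the paper's own proof: the paper likewise invokes Segal's CW-free spectral sequence \cite{MR232393}*{Proposition 5.2}, with $E_2^{p q} = H^p(X, F^q)$ where $F^{2k} = \sfZ$ and $F^{2k+1} = 0$, and compares the $E_2$-sheets for $X = T$ and $X = \nvGR{T_\bullet}$ via Proposition \ref{prop:sheaf-coh-emb-to-skel}. However, there is a genuine gap that your write-up leaves open and that the paper resolves at the outset. In this proposition $K^\bullet(X)$ means operator $K$-theory $K_\bullet(C_0(X))$, i.e.\ $K$-theory with compact supports (this is how the statement is used later, e.g.\ $K^q(G^{(p)} \times_T \nvGR{T_\bullet}) \cong K_q(G^{(p)} \ltimes C_0(\nvGR{T_\bullet}))$), whereas both of your mechanisms --- hypercohomology of the constant sheaf of $\KU$-spectra with $\cR\tilde g_*$, and the spectral sequence $E_2^{p,q} = H^p(X, \underline{K^q(\mathrm{pt})})$ built from \emph{ordinary} sheaf cohomology --- compute \emph{representable} $K$-theory $\RK^n(X) = [X, \KU_n]$. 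These theories differ on non-compact locally compact spaces: for $X = \R$ one has $\RK^0(\R) = \Z$ and $\RK^1(\R) = 0$, while $K_0(C_0(\R)) = 0$ and $K_1(C_0(\R)) = \Z$. So the spectral sequence you wrote down, with ordinary cohomology on the $E_2$-page and compactly supported $K$-theory as abutment, is false for non-compact $T$; this is a mismatch of supports, not the CW-foundational issue you flag, and adapting Segal's framework alone does not repair it.

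The paper's remedy is a short reduction you are missing: pass to one-point compactifications. Since $g$ is proper, finite-to-one and surjective, so is $T_0^+ \to T^+$, and one checks $\nvGR{T_\bullet}^+ = \nvGR{T_\bullet^+}$; hence it suffices to prove the claim for $T_\bullet^+$. For compact spaces $\RK^n(X) \cong K^n(X)$, so after this reduction Segal's spectral sequence has the correct abutment, Proposition \ref{prop:sheaf-coh-emb-to-skel} applies to the compactified data (the fiber over the point at infinity is a single point, the other fibers are simplices as before) and gives the $E_2$-isomorphism, and the comparison of convergent spectral sequences finishes the proof exactly as you describe. With this reduction inserted, your argument is the paper's argument; without it, the statement you prove concerns the wrong $K$-theory.
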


\begin{proof}
First, we may assume that $T$ is compact.
Indeed, denoting the one-point compactification by $T^+$, we have a proper surjective map $T_0^+ \to T^+$.
Then we have $\nvGR{T_\bullet}^+ = \nvGR{T_\bullet^+}$, hence it is enough to prove the claim for $T_\bullet^+$.

Consider the representable $K$-groups $\RK^n(X) = [X, \KU_n]$, where $\KU_n$ is the $K$-theory spectrum, i.e., $\KU_{2k} = BU \times \Z$ and $\KU_{2k+1} = \Omega BU = U$ for the infinite unitary group $U = \varinjlim U_k$.
Then, this is a generalized cohomology as considered in~\cite{MR232393}, while satisfying $\RK^n(X) \cong K^n(X)$ for compact $X$.
We thus need to check $\RK^n(\nvGR{T_\bullet}) \cong \RK^n(T)$.

By~\cite{MR232393}*{Proposition 5.2}, there is a spectral sequence
\[
E_2^{p q} = H^p(X, F^q) \Rightarrow \RK^{p + q}(X).
\]
where $F^{2 k} = \sfZ$ and $F^{2 k + 1} = 0$.
(This is a version of the Atiyah--Hirzebruch spectral sequence that works without any CW-complex structure.)
By Proposition~\ref{prop:sheaf-coh-emb-to-skel}, we have the isomorphism of $E_2$-sheet for $X = T$ and $X = \nvGR{T_\bullet}$.
\end{proof}

Following the scheme in~\cite{MR232393}, we define a spectral sequence converging to $K_\bullet(G \ltimes C_0(\nvGR{T_\bullet}))$.
The filtration $\nvGR{T_\bullet}^{(n)}$ on $\nvGR{T_\bullet}$, combined with Proposition~\ref{prop:naive-geom-realiz-K-group-isom} induces a spectral sequence.
\[
E_1^{p q} = K^{p+q}(\nvGR{T_\bullet}^{(p)} \setminus \nvGR{T_\bullet}^{(p-1)}) \Rightarrow K^{p + q}(T).
\]

When $V$ is a vector space (over $\Q$) with a linear representation of $S_n$, let us denote its isotypic summand for the sign representation by $V_A$.
This is the image of the projection
\[
p_a = \frac1{n!} \sum_{s \in S_n} (-1)^{\absv{s}}s
\]
acting on $V$.

\begin{lemma}\label{lem:triv-alt-for-non-free-action}
Let $X$ be a locally compact Hausdorff space with an action of $S_n$, such that the stabilizer of any point $x \in X$ contains an odd permutation.
We then have $H^\bullet(X, \underline{\Q})_A = 0$.
\end{lemma}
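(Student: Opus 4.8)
The plan is to descend to the quotient by the $S_n$-action and reduce the vanishing of the sign-isotypic summand to a stalkwise representation-theoretic computation. First I would form the quotient map $\pi\colon X \to X/S_n$. Since $S_n$ is finite and $X$ is locally compact Hausdorff, the orbit space $X/S_n$ is again locally compact Hausdorff, the fibres of $\pi$ are finite, and $\pi$ is a proper (hence finite) map that is moreover $S_n$-invariant, so that the natural $S_n$-action lifts to an action on the sheaf $\pi_* \underline{\Q}_X$. The key point is that a finite map has \emph{exact} direct image: the stalk of $\pi_* \mathcal{F}$ at a point $y$ is the finite direct sum $\bigoplus_{x \in \pi^{-1}(y)} \mathcal{F}_x$, and both the stalk functor and finite direct sums are exact. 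Consequently $R^q \pi_* \underline{\Q}_X = 0$ for $q > 0$, and the Leray spectral sequence for $\pi$ degenerates into an $S_n$-equivariant isomorphism $H^\bullet(X, \underline{\Q}) \cong H^\bullet(X/S_n, \pi_* \underline{\Q}_X)$.

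Next I would analyze the sheaf $\pi_* \underline{\Q}_X$ stalkwise. For $y = \pi(x)$ the fibre $\pi^{-1}(y)$ is exactly the orbit $S_n \cdot x$, which as an $S_n$-set is isomorphic to $S_n/H_x$ with $H_x = \mathrm{Stab}(x)$. By the stalk formula above, $(\pi_* \underline{\Q}_X)_y \cong \Q[S_n/H_x]$, which is the induced representation $\Ind_{H_x}^{S_n} \mathbf{1}$. By Frobenius reciprocity the multiplicity of the sign representation in this stalk is
\[
\langle \mathrm{sgn}, \Ind_{H_x}^{S_n} \mathbf{1} \rangle_{S_n} = \langle \Res_{H_x} \mathrm{sgn}, \mathbf{1} \rangle_{H_x} = \langle \mathrm{sgn}|_{H_x}, \mathbf{1} \rangle_{H_x},
\]
which vanishes precisely because $H_x$ contains an odd permutation, so that $\mathrm{sgn}|_{H_x}$ is nontrivial. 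Hence the idempotent $p_a$ annihilates every stalk of $\pi_* \underline{\Q}_X$, and therefore $(\pi_* \underline{\Q}_X)_A = p_a \cdot \pi_* \underline{\Q}_X = 0$ as a sheaf.

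Finally I would conclude by invoking semisimplicity of $\Q[S_n]$: the natural idempotent $p_a$ splits $\pi_* \underline{\Q}_X$ as a direct sum of isotypic summands, and since cohomology commutes with this finite direct-sum decomposition and with the functorial $S_n$-action, the equivariant isomorphism above gives
\[
H^\bullet(X, \underline{\Q})_A \cong p_a\, H^\bullet(X/S_n, \pi_* \underline{\Q}_X) \cong H^\bullet\bigl(X/S_n, (\pi_* \underline{\Q}_X)_A\bigr) = 0,
\]
as desired. I expect the main obstacle to be the topological bookkeeping in the merely locally compact Hausdorff setting, namely verifying that $\pi$ is genuinely a finite map (proper with finite fibres) and that the fiberwise stalk description of $\pi_* \underline{\Q}_X$ holds without any manifold or CW hypothesis; once the exactness of $\pi_*$ is secured, the remainder is the standard Frobenius-reciprocity count.
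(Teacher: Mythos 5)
Your proposal is correct and follows essentially the same route as the paper's proof: both descend along the quotient map $\pi\colon X \to X/S_n$, establish that the higher direct images vanish (the paper via the proper base change theorem, you via exactness of the finite pushforward), and then show that the sign-isotypic part of each stalk $(\pi_*\underline{\Q})_y \cong \Q[S_n/H_x]$ is zero because every stabilizer contains an odd permutation. The only cosmetic difference is that you phrase the stalk vanishing through Frobenius reciprocity, where the paper simply observes $\Gamma(\pi^{-1}(y),\Q)_A = 0$ directly.
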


\begin{proof}
Let $Y = X / S_n$, and consider the projection map $\pi \colon X \to Y$.
In this situation, we claim that $H^\bullet(X, \underline{\Q}) \cong H^\bullet(Y, \pi_* \underline{\Q})$.
Indeed, $\pi$ is a proper map with discrete fiber, hence the proper base change theorem applies and gives $\cR \pi_* \underline{\Q} \simeq \pi_* \underline{\Q}$.
Then the claim is a straightforward consequence of $\cR \pi^Y_* \cR \pi_* \simeq \cR \pi^X_*$ for the canonical maps $\pi^X$ and $\pi^Y$ from $X$ and $Y$ to the singleton space.

Acting by the projection $p_a$, we obtain
\[
H^\bullet(X, \underline{\Q})_A \cong H^\bullet(Y, p_a \pi_* \underline{\Q})
\]
Now, at the level of stalks we have
\[
(p_a \pi_* \underline{\Q})_y = \Gamma(\pi^{-1}(y), \Q)_A
\]
for all $y \in Y$.
This has to be trivial by our assumption on stabilizers.
Indeed, if $x$ is in $\pi^{-1}(y)$, we can choose an odd permutation $s$ from a stabilizer of $x$.
If $\sigma$ is in $\Gamma(\pi^{-1}(y), \Q)_A$, we have $s \sigma = - \sigma$, and evaluating at $x$ gives $\sigma(x) = 0$.
This implies $H^\bullet(Y, p_a \pi_* \underline{\Q}) = 0$, hence the claim.
\end{proof}

\begin{proposition}\label{prop:rat-segal-e1-sheet}
We have
\[
(K^{p+q}(\nvGR{T_\bullet}^{(p)} \setminus \nvGR{T_\bullet}^{(p-1)})) \otimes \Q \cong (K^{q}(T_p) \otimes \Q)_A
\]
with respect to the natural action of $S_{p+1}$ on $T_p$.
\end{proposition}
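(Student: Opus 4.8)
The plan is to analyze the open stratum $\nvGR{T_\bullet}^{(p)}\setminus\nvGR{T_\bullet}^{(p-1)}$ as a free quotient, compute its rational $K$-theory by a suspension isomorphism, and then pass from the free part $T_p^f$ to all of $T_p$ by means of Lemma~\ref{lem:triv-alt-for-non-free-action}. As in Proposition~\ref{prop:naive-geom-realiz-K-group-isom} I would first reduce to compact $T$, so that $T_p$ is compact and representable $K$-theory agrees with $K$-theory, and work throughout with compactly supported $K$-theory. To identify the stratum, I would use that the fibers of $\tilde g\colon\nvGR{T_\bullet}\to T$ are the realizations of the symmetric simplicial sets $n\mapsto(g^{-1}(x))^{n+1}$, i.e.\ the simplices on the finite vertex sets $g^{-1}(x)$, with the skeletal filtration restricting fiberwise to their skeleta. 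An open $p$-cell of the simplex on a set $S$ corresponds to an unordered $(p+1)$-element subset of $S$, which leads to a homeomorphism
\[
\nvGR{T_\bullet}^{(p)}\setminus\nvGR{T_\bullet}^{(p-1)}\;\cong\;(\mathring\Delta^p\times T_p^f)/S_{p+1},
\]
where $S_{p+1}$ acts diagonally, permuting the barycentric coordinates of $\mathring\Delta^p$ and the factors of $T_p$. Since the coordinates of a point of $T_p^f$ are pairwise distinct, this action is free, while points of $T_p$ with a repeated coordinate span a face of dimension $<p$ and hence lie in the lower skeleton.

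Next I would compute $K^{p+q}$ of this free quotient rationally. For the free, hence proper, action of the finite group $S_{p+1}$ on $W=\mathring\Delta^p\times T_p^f$, the transfer for the finite covering $W\to W/S_{p+1}$ yields $K^\bullet(W/S_{p+1})\otimes\Q\cong(K^\bullet(W)\otimes\Q)^{S_{p+1}}$. As $\mathring\Delta^p\cong\R^p$, the suspension (Thom) isomorphism gives $K^{p+q}(\mathring\Delta^p\times T_p^f)\cong K^q(T_p^f)$; but a permutation $s\in S_{p+1}$ reverses the orientation of $\mathring\Delta^p$ precisely by $\mathrm{sgn}(s)$, so rationally this isomorphism is equivariant only after twisting by the sign representation. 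Combining these two facts,
\[
K^{p+q}\!\big((\mathring\Delta^p\times T_p^f)/S_{p+1}\big)\otimes\Q\;\cong\;\big((K^q(T_p^f)\otimes\Q)\otimes\mathrm{sgn}\big)^{S_{p+1}}\;=\;(K^q(T_p^f)\otimes\Q)_A .
\]

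It then remains to replace $T_p^f$ by $T_p$ inside the sign-isotypic component. The complement $D=T_p\setminus T_p^f$ is closed and $S_{p+1}$-invariant, and every point of $D$ has two equal coordinates, hence a stabilizer containing a transposition. The long exact sequence in compactly supported $K$-theory for the open inclusion $T_p^f\hookrightarrow T_p$ with closed complement $D$ is $S_{p+1}$-equivariant; applying the central idempotent $p_a$, so that $(\cdot)_A$ is exact, reduces the claim to $(K^\bullet(D)\otimes\Q)_A=0$. For this I would run the Segal version of the Atiyah--Hirzebruch spectral sequence $E_2^{a,b}=H^a(D,F^b)\Rightarrow\RK^{a+b}(D)$ from \cite{MR232393}*{Proposition 5.2}, which is again equivariant; rationally its $E_2$-page is $H^a(D,\underline\Q)$ in even $b$, whose sign-isotypic part vanishes by Lemma~\ref{lem:triv-alt-for-non-free-action}. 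Hence $(K^\bullet(D)\otimes\Q)_A=0$, so $(K^q(T_p^f)\otimes\Q)_A\cong(K^q(T_p)\otimes\Q)_A$, which together with the previous display gives the statement.

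The main obstacle I anticipate is the stratum identification: pinning down the homeomorphism type as a free $S_{p+1}$-quotient requires care with the degeneracies of the symmetric simplicial structure, since the naive realization does not split as an open simplex times an ordered non-degenerate set, and it is the fiberwise-simplex description that makes the indexing of free orbits by $T_p^f$ transparent. The other delicate point is the sign bookkeeping in the suspension isomorphism, which is exactly what produces the sign-isotypic component $(\cdot)_A$ rather than the plain invariants.
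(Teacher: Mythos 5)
Your proof follows the paper's argument essentially step for step: the same identification of the open stratum as $(\mathring\Delta^p\times T_p^f)/S_{p+1}$, the same rational passage to $S_{p+1}$-invariants (the paper cites \cite{MR2820377}*{Proposition 5.5}, for which your transfer argument is the standard proof in the free case), the same sign-twisted suspension isomorphism, and the same reduction via the equivariant six-term sequence to the vanishing of $(K^\bullet(T_p\setminus T_p^f)\otimes\Q)_A$ via Lemma~\ref{lem:triv-alt-for-non-free-action}. The only cosmetic difference is the final vanishing step, where the paper applies the rational Chern character comparison on the one-point compactification of $T_p\setminus T_p^f$ while you run Segal's spectral sequence on it directly (legitimate after your reduction to compact $T$, which makes that set compact); both routes reduce the claim to the same cohomological lemma.
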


\begin{proof}
By construction, $\nvGR{T_\bullet}^{(p)} \setminus \nvGR{T_\bullet}^{(p-1)}$ is the quotient of $\mathring{\Delta}^p \times T_p^f$ by the diagonal action of $S_{p+1}$.
By~\cite{MR2820377}*{Proposition 5.5}, we have
\[
(K^{p+q}(\nvGR{T_\bullet}^{(p)} \setminus \nvGR{T_\bullet}^{(p-1)})) \otimes \Q \cong (K^{p+q}(\mathring{\Delta}^p \times T_p^f)^{S_{p+1}}) \otimes \Q.
\]

On the right hand side, $\mathring{\Delta}^p$ is homeomorphic to $\R^p$, and the Bott periodicity gives an isomorphism $K^{p+q}(\mathring{\Delta}^p \times T_p^f) \simeq K^q(T_p^f)$.
As the action of odd permutations on $\mathring{\Delta}^p$ flips the orientation, the action of $S_{p+1}$ on $K^{p+q}(\mathring{\Delta}^p \times T_p^f)$ can be identified with the twist of the natural one on $K^q(T_p^f)$ by the sign representation under this isomorphism.
Moreover, rationalization and taking $S_{p+1}$-invariants commute, hence we have $(K^q(T_p^f) \otimes \Q)_A$.

It remains to show $(K^q(T_p^f) \otimes \Q)_A \cong (K^q(T_p) \otimes \Q)_A$.
By the $6$-term exact sequence involving $K^\bullet(T_p^f) \otimes \Q$, $K^\bullet(T_p) \otimes \Q$, and $K^\bullet(T_p \setminus T_p^f) \otimes \Q$, which is $S_{p+1}$-equivariant, the claim is equivalent to $(K^\bullet(T_p \setminus T_p^f) \otimes \Q)_A = 0$.

Let $X$ be the one point compactification of $T_p \setminus T_p^f$.
Then Lemma~\ref{lem:triv-alt-for-non-free-action} implies that
\[
(K^i(X) \otimes \Q)_A \cong \biggl(\bigoplus_{k} H^{i + 2 k}(X, \underline{\Q})\biggr)_A
\]
is trivial for $i = 0, 1$, hence we obtain the claim.
\end{proof}

\begin{corollary}
We have a convergent cohomological spectral sequence
\[
E_1^{p q} = (K^q(T_p) \otimes \Q)_A \Rightarrow K^{p + q}(T) \otimes \Q,
\]
with the $E_1$-differential $E_1^{p q} \to E_1^{(p+1) q}$ induced by the simplicial structure of $T_\bullet$.
\end{corollary}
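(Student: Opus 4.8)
The plan is to assemble the corollary from the skeletal-filtration spectral sequence displayed above together with the rational identification of its $E_1$-page established in Proposition \ref{prop:rat-segal-e1-sheet}, the only genuinely new point being the identification of the $E_1$-differential.

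First I would start from the spectral sequence
\[
E_1^{p q} = K^{p+q}(\nvGR{T_\bullet}^{(p)} \setminus \nvGR{T_\bullet}^{(p-1)}) \Rightarrow K^{p + q}(T)
\]
arising from the filtration $\nvGR{T_\bullet}^{(p)}$ of $\nvGR{T_\bullet}$, where the abutment is $K^\bullet(T)$ rather than $K^\bullet(\nvGR{T_\bullet})$ by the isomorphism of Proposition \ref{prop:naive-geom-realiz-K-group-isom}. Since $g$ is at most $N$-to-one we have $\nvGR{T_\bullet} = \nvGR{T_\bullet}^{(N)}$, so the filtration is finite and convergence is automatic. Tensoring the whole spectral sequence with $\Q$ is exact, as $\Q$ is flat over $\Z$, so it remains a convergent spectral sequence, now abutting to $K^{p+q}(T) \otimes \Q$.

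Next I would rewrite the $E_1$-sheet. For each $p$, Proposition \ref{prop:rat-segal-e1-sheet} supplies the isomorphism
\[
\bigl(K^{p+q}(\nvGR{T_\bullet}^{(p)} \setminus \nvGR{T_\bullet}^{(p-1)})\bigr) \otimes \Q \cong (K^{q}(T_p) \otimes \Q)_A,
\]
which puts the rationalized $E_1$-page into the asserted form $E_1^{p q} = (K^q(T_p) \otimes \Q)_A$.

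The remaining, and main, point is to identify the $E_1$-differential with the coboundary induced by the simplicial structure of $T_\bullet$. Following Segal \cite{MR232393}, the differential $d_1 \colon E_1^{pq} \to E_1^{(p+1)q}$ is the composite of the connecting and restriction homomorphisms in the long exact $K$-theory sequences attached to the inclusions $\nvGR{T_\bullet}^{(p-1)} \subset \nvGR{T_\bullet}^{(p)} \subset \nvGR{T_\bullet}^{(p+1)}$. Under the identification of the preceding paragraph this should correspond to the alternating sum $\sum_{i=0}^{p+1} (-1)^i \delta_i^*$ of the maps induced by the face operators $\delta_i \colon T_{p+1} \to T_p$ of the fiber-product simplicial space, restricted to the sign-isotypic summand. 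The verification amounts to unwinding how the faces of $\mathring\Delta^{p+1} \times T_{p+1}^f$ attach to $\mathring\Delta^{p} \times T_{p}^f$ in the realization and checking that the signs coming from the orientations of the simplex faces match those defining the alternating projection $p_a$. I expect this bookkeeping to be the principal obstacle: in the symmetric setting one must confirm that the connecting maps are compatible with the $S_{\bullet}$-actions so that passing to the sign-isotypic component commutes with the face maps, the rest being formal consequences of the two cited propositions.
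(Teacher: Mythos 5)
Your proposal is correct and takes essentially the same route as the paper, which in fact states this corollary without a separate proof: it is exactly the assembly of the skeletal filtration spectral sequence, the abutment identification from Proposition \ref{prop:naive-geom-realiz-K-group-isom}, the finiteness of the filtration (so that rationalization, being exact, preserves convergence), and the $E_1$-identification of Proposition \ref{prop:rat-segal-e1-sheet}. Your unwinding of the $E_1$-differential as the alternating sum of face maps, with the sign and equivariance bookkeeping, is precisely what the paper implicitly delegates to Segal \cite{MR232393}.
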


\subsection{Equivariant \texorpdfstring{$K$}{K}-theory spectral sequence}
\label{sec:equiv-k-th-spec-seq}

We now consider the case where $G$ is an étale groupoid with torsion-free stabilizers satisfying the strong Baum--Connes conjecture~\cite{val:kthpgrp}, with base $T = G^{(0)}$.
Suppose that $T_0$ is a $G$-space, mapping onto $T$ by a surjective map which is at most $N$-to-one.
Then $\nvGR{T_\bullet}$ is also a locally compact $G$-space, and we have an equivariant $*$-homomorphism $C_0(T) \to C_0(\nvGR{T_\bullet})$.

\begin{proposition}\label{prop:k-grp-isom-for-naive-geom-real-cross-prod}
Under the above setting, the inclusion $C^*_r G \to G \ltimes C_0(\nvGR{T_\bullet})$ induces an isomorphism $K_\bullet(G \ltimes C_0(\nvGR{T_\bullet})) \cong K_\bullet(C^*_r G)$.
\end{proposition}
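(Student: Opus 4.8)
The plan is to build the Meyer-type ``ABC'' spectral sequence of Section \ref{sec:triang-cat-from-groupoid-equiv} for both coefficient algebras $C_0(T)$ and $C_0(\nvGR{T_\bullet})$, and to show that the equivariant inclusion induces an isomorphism already on the $E^1$-page. Concretely, take the open subgroupoid to be $H = G^{(0)}$, so that $L = \Ind_{G^{(0)}}^G \Res_{G^{(0)}}^G$ and, as recorded in Remark \ref{rem:spec-seq-gen-special-cases}, the resulting spectral sequence for a $G$-C$^*$-algebra $A$ has
\[
E^1_{p q} = K_q\bigl(C_0(G^{(p)}) \otimes_{C_0(G^{(0)})} A\bigr).
\]
Because $G$ has torsion-free stabilizers and satisfies the strong Baum--Connes conjecture, the projective approximation $P$ of $A$ captures all of $K_\bullet(G \ltimes A)$, so this spectral sequence converges to $K_{p+q}(G \ltimes A)$. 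Since $\tilde g^* \colon C_0(T) \to C_0(\nvGR{T_\bullet})$ is a morphism in $\KK^G$, functoriality of the bar construction $P_\bullet = L^{\bullet+1}(-)$ makes it induce a morphism between the two spectral sequences.

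First I would identify the $E^1$-terms. For $A = C_0(T) = C_0(G^{(0)})$ one has $E^1_{p q} = K_q(C_0(G^{(p)}))$, so the abutment is $K_\bullet(G \ltimes C_0(T)) = K_\bullet(C^*_r G)$. For $A = C_0(\nvGR{T_\bullet})$, the balanced tensor product over $C_0(G^{(0)}) = C_0(T)$ gives
\[
C_0(G^{(p)}) \otimes_{C_0(T)} C_0(\nvGR{T_\bullet}) \cong C_0\bigl(G^{(p)} \times_T \nvGR{T_\bullet}\bigr),
\]
where $G^{(p)}$ is viewed over $T$ through (say) the source of the last arrow, and $\nvGR{T_\bullet}$ through $\tilde g$. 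As the naive realization is a quotient of a colimit on which $S_{n+1}$ acts only through the $T_\bullet$-factor, it commutes with the pullback along $G^{(p)} \to T$, so this space is the naive realization $\nvGR{G^{(p)} \times_T T_\bullet}$ of the simplicial space obtained by base-changing $T_\bullet$ to $G^{(p)}$.

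The key point is then that, over the new base $G^{(p)}$, the base-changed map $g^{(p)} \colon G^{(p)} \times_T T_0 \to G^{(p)}$ is again proper, surjective, and at most $N$-to-one, being a pullback of $g$. Therefore Proposition \ref{prop:naive-geom-realiz-K-group-isom}, applied with $G^{(p)}$ in place of $T$, shows that the projection $\nvGR{G^{(p)} \times_T T_\bullet} \to G^{(p)}$ induces an isomorphism on $K$-theory. Tracking the identifications above, this isomorphism is precisely the map induced by $\tilde g^*$ on $E^1_{p q}$, for every $p$ and $q$; hence the morphism of spectral sequences is an isomorphism on the entire $E^1$-page.

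Finally I would invoke the comparison theorem for (conditionally, and here strongly) convergent spectral sequences: a morphism that is an isomorphism on $E^1$ is an isomorphism on every subsequent page and on the abutments. Since the two abutments are $K_\bullet(C^*_r G)$ and $K_\bullet(G \ltimes C_0(\nvGR{T_\bullet}))$, and the comparison map is exactly the one induced by the inclusion $C^*_r G \to G \ltimes C_0(\nvGR{T_\bullet})$, the claim follows. The main obstacle I anticipate is not the $E^1$-comparison, which reduces cleanly to Proposition \ref{prop:naive-geom-realiz-K-group-isom} on each nerve space $G^{(p)}$, but rather the verification that both spectral sequences genuinely converge to the crossed-product $K$-theory; this is precisely where the torsion-free and strong Baum--Connes hypotheses enter, guaranteeing that the approximation $P$ of Section \ref{sec:triang-cat-from-groupoid-equiv} satisfies $K_\bullet(G \ltimes P) \cong K_\bullet(G \ltimes A)$.
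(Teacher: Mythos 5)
Your proposal is correct and follows essentially the same route as the paper's own proof: both apply the spectral sequence of Theorem \ref{thm:spec-seq-gen} (with $H = G^{(0)}$, as in Remark \ref{rem:spec-seq-gen-special-cases}) to the coefficient algebras $C_0(T)$ and $C_0(\nvGR{T_\bullet})$, identify the $E^1$-terms via the base-change isomorphism $G^{(p)} \times_T \nvGR{T_\bullet} \cong \nvGR{G^{(p)} \times_T T_\bullet}$ together with Proposition \ref{prop:naive-geom-realiz-K-group-isom} applied over $G^{(p)}$, and conclude by comparing the two spectral sequences at the $E^1$-sheet. Your write-up is somewhat more explicit than the paper's about the balanced tensor product identification, naturality of the comparison map, and the role of the torsion-free/strong Baum--Connes hypotheses in the convergence, but the underlying argument is identical.
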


\begin{proof}
By Theorem~\ref{thm:spec-seq-gen} and Remark~\ref{rem:spec-seq-gen-special-cases}, we have a spectral sequence
\[
E^1_{p q} = K^q(G^{(p)} \times_T \nvGR{T_\bullet}) \cong K_q(G^{(p)} \ltimes C_0(\nvGR{T_\bullet})) \Rightarrow K_{p + q}(G \ltimes C_0(\nvGR{T_\bullet})),
\]
and a similar one for $C^*_r G$.
Now, we observe that $G^{(p)} \times_T T_0$ maps onto $G^{(p)}$ by a finite-to-one map, and
\[
G^{(p)} \times_T \nvGR{T_\bullet} \cong \nvGR{G^{(p)} \times_T T_\bullet}.
\]
Combining this with Proposition~\ref{prop:naive-geom-realiz-K-group-isom}, we have
\[
K^q(G^{(p)} \times_T \nvGR{T_\bullet}) \cong K^q(G^{(p)}).
\]
Thus, the inclusion $C^*_r G \to G \ltimes C_0(\nvGR{T_\bullet})$ induces an isomorphism of spectral sequences at the $E^1$-sheet.
\end{proof}

This time we obtain the convergent spectral sequence
\begin{equation}\label{eq:spec-seq-for-K-of-G-crossed}
E_1^{p q} = K_{p+q}(G \ltimes C_0(\nvGR{T_\bullet}^{(p)} \smallsetminus \nvGR{T_\bullet}^{(p-1)})) \Rightarrow K_{p + q}(C^*_r G).
\end{equation}

\subsection{Soft resolution from simplicial totally disconnected space}
\label{sec:sof-res-from-u-bij-map}

Now, suppose $T_0$ is a totally disconnected space with a surjective proper continuous map $g\colon T_0 \to T$.
We denote the $(n+1)$-fold fiber product of $T_0$ over $T$ by $T_n$, and by $g_n$ the induced map $T_n \to T$.

\begin{proposition}\label{prop:dir-img-soft}
Let $F$ be a sheaf over $T_n$.
Then $(g_n)_* F$ is a (c-)soft sheaf on $T$.
\end{proposition}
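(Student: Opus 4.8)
The plan is to reduce the statement to two facts: that $T_n$ is a totally disconnected, locally compact Hausdorff space, and that on such a space every sheaf of abelian groups is c-soft; the conclusion then follows by transferring softness along $g_n$. Total disconnectedness of $T_n$ is immediate, since $T_0$ is totally disconnected and $T_n$ is a subspace of the product $T_0^{n+1}$; it is moreover closed there (the defining conditions $g(x_i) = g(x_j)$ are closed because $T$ is Hausdorff), hence again locally compact Hausdorff. I would also record at the outset that $g_n$ is proper: it is built from $g$ by iterated fiber products, and both base change and composition preserve properness, so in particular $g_n$ is a closed map with compact fibers.

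Next I would establish the key local fact: if $X$ is a totally disconnected, locally compact Hausdorff space, then every sheaf $F$ of abelian groups on $X$ is c-soft. Here one uses that such an $X$ has a basis of compact open sets. Given a compact $K \subseteq X$ and a section $\sigma \in \Gamma(K, F)$, the section extends to some open $U \supseteq K$; covering $K$ by finitely many compact open subsets of $U$ produces a compact open $W$ with $K \subseteq W \subseteq U$. Since $W$ is clopen, $X = W \sqcup (X \setminus W)$ is a decomposition into disjoint open sets, so I can glue $\sigma|_W$ on $W$ with the zero section on $X \setminus W$ to obtain a global section $\tilde\sigma \in \Gamma(X, F)$ restricting to $\sigma$ on $K$. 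The purely soft case, with $K$ merely closed, is identical once one separates $K$ from $X \setminus U$ by a clopen set, which is possible because a second countable totally disconnected locally compact Hausdorff space is strongly zero-dimensional.

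With these in hand, the transfer along $g_n$ is the heart of the argument. Fix a compact $S \subseteq T$ and a section $\tau \in \Gamma(S, (g_n)_* F)$; by definition of the direct image and of sections over a closed set, $\tau$ is represented by some $\sigma \in F(g_n^{-1}(U_0))$ for an open $U_0 \supseteq S$. The preimage $g_n^{-1}(S)$ is compact, and restricting $\sigma$ gives a section of $F$ over it; applying the c-softness of $F$ on $T_n$ yields a global section $\tilde\sigma \in F(T_n) = \Gamma(T, (g_n)_* F)$ agreeing with $\sigma$ on $g_n^{-1}(S)$. It then remains to check that $\tilde\sigma$ and $\sigma$ define the same element of $\Gamma(S, (g_n)_* F) = \varinjlim_{U \supseteq S} F(g_n^{-1}(U))$. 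For this I would use that the locus where two sections of a sheaf agree is open: $\tilde\sigma$ and $\sigma$ coincide on an open $V \supseteq g_n^{-1}(S)$, and since $g_n$ is closed the tube lemma provides an open $U \supseteq S$ with $g_n^{-1}(U) \subseteq V$. Hence $\tilde\sigma$ and $\sigma$ agree on $g_n^{-1}(U)$, so $\tilde\sigma$ restricts to $\tau$ over $S$, proving c-softness of $(g_n)_* F$.

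The one point requiring care — and the place where the hypotheses combine — is precisely this last matching step: c-softness on $T_n$ only delivers agreement on the compact set $g_n^{-1}(S)$, and one must propagate this to a full saturated neighborhood $g_n^{-1}(U)$ in order to conclude equality of germs over $S$. This is exactly what the properness (closedness) of $g_n$ buys, via the tube lemma, while total disconnectedness of $T_0$ is what makes every sheaf c-soft upstairs on $T_n$. I note that no finiteness of the fibers of $g$ is needed for this particular statement; properness alone suffices.
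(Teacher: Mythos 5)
Your proof is correct and takes essentially the same route as the paper's: both arguments rest on the (c-)softness of every sheaf on the totally disconnected space $T_n$ together with properness/closedness of $g_n$ to identify sections of $(g_n)_*F$ over a compact $S \subseteq T$ with sections of $F$ over $g_n^{-1}(S)$, and your tube-lemma step is literally the paper's construction $V = U \setminus g_n(W)$ with $W = g_n^{-1}(\bar U) \setminus V'$ compact. The only cosmetic differences are that you prove the upstairs softness by hand via clopen gluing (the paper cites it from second countability and total disconnectedness) and that you extend to a global section of $F$ on $T_n$ and then match germs, rather than working with the local extension criterion over relatively compact neighborhoods.
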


\begin{proof}
To check the c-softness of $(g_n)_* F$, we want to show the following property (see~\cite{MR0345092}*{Théorème II.3.4.1}): for any point $x \in T$, there is a neighborhood $U$ of $x$ such that, for any closed set $S$ of $T$ contained in $U$ and $s \in \Gamma(S, (g_n)_* F)$, there is an extension $\tilde s \in \Gamma(U, (g_n)_* F)$ of $s$.
We take $U$ to be a relatively compact open neighborhood of $x$.

By the second countability and total disconnectedness of $T_n$, $F$ is soft.
Then, if $S$ is a closed set as above (hence compact), any section of $F$ on the compact set $g_n^{-1}(S)$ extends to a section on $g_n^{-1}(U)$.
We thus need to show that the sections of $(g_n)_* F$ on $S$ can be identified with those of $F$ on $g_n^{-1}(S)$.

We have (\cite{MR0345092}*{II.3.3.1})
\begin{align*}
\Gamma(S, (g_n)_* F) &= \varinjlim_{V \supset S} \Gamma(g_n^{-1}(V), F),&
\Gamma(g_n^{-1}(S), F) &= \varinjlim_{V' \supset g_n^{-1}(S)} \Gamma(V', F),
\end{align*}
where $V$ runs over open subsets of $U$ containing $S$, while $V'$ runs over those containing $g_n^{-1}(S)$.
We thus have the equality of these spaces if the sets $g_n^{-1}(V)$ form a fundamental system of neighborhoods of $g_n^{-1}(S)$.

Let $V'$ be an open subset of $g_n^{-1}(U)$ containing $g_n^{-1}(S)$.
Then $W = g_n^{-1}(\bar U) \smallsetminus V'$ is a compact subset of $T_n$ by the properness of $g_n$.
Then $V = U \smallsetminus g_n(W)$ is an open neighborhood of $S$ such that $g_n^{-1}(V) \subset V'$.
\end{proof}

We also have the following correspondence for compactly supported sections.

\begin{proposition}\label{prop:cpt-section-dir-img}
Let $F$ be a sheaf of commutative groups on $T_n$.
Then $\Gamma_c(S, (g_n)_* F) = \Gamma_c(g_n^{-1}(S), F)$ for any open set $S \subset T$.
\end{proposition}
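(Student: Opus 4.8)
The plan is to reduce the statement to a comparison of supports. For an \emph{open} set $S \subset T$, the definition of the direct image functor gives $\Gamma(S, (g_n)_* F) = (g_n)_*F(S) = F(g_n^{-1}(S)) = \Gamma(g_n^{-1}(S), F)$, so the two groups of sections appearing in the statement already sit inside canonically identified groups. Writing $\tilde s \in \Gamma(g_n^{-1}(S), F)$ for the section corresponding to $s \in \Gamma(S, (g_n)_* F)$, the entire content of the proposition is that $s$ has compact support if and only if $\tilde s$ does.

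The heart of the matter is therefore the set-theoretic identity $\supp s = g_n(\supp \tilde s)$. Since the locus on which a section of a sheaf vanishes is open, both supports are closed in their respective domains, and it suffices to compare complements. If $x \notin g_n(\supp \tilde s)$, then $\tilde s_y = 0$ for every $y$ in the (finite) fiber $g_n^{-1}(x)$; choosing for each such $y$ an open neighborhood on which $\tilde s$ vanishes and taking their union $W \supset g_n^{-1}(x)$, the properness of $g_n$ (hence the fact that $g_n$ is a closed map) produces an open set $V = T \smallsetminus g_n(T_n \smallsetminus W)$ containing $x$ with $g_n^{-1}(V) \subset W$; then $\tilde s|_{g_n^{-1}(V)} = 0$, so $s|_V = 0$ and $x \notin \supp s$. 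The reverse inclusion is immediate, as $s_x = 0$ forces $\tilde s_y = 0$ for all $y \in g_n^{-1}(x)$. This is the same mechanism already exploited in the proof of Proposition \ref{prop:dir-img-soft}, namely that the sets $g_n^{-1}(V)$ form a fundamental system of neighborhoods of $g_n^{-1}(x)$, and I expect this step to be the main (though routine) obstacle.

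Granting the identity $\supp s = g_n(\supp \tilde s)$, the equivalence of compactness is purely topological. If $\supp \tilde s$ is compact, then $\supp s = g_n(\supp \tilde s)$ is compact as the continuous image of a compact set. Conversely, if $\supp s$ is compact, then $g_n^{-1}(\supp s)$ is compact by the properness of $g_n$, and since $\supp \tilde s$ is closed in $g_n^{-1}(S)$ and contained in $g_n^{-1}(\supp s) \subset g_n^{-1}(S)$, it is a closed subset of a compact set and hence compact. Combining the two directions shows that the identification $s \leftrightarrow \tilde s$ restricts to the asserted equality $\Gamma_c(S, (g_n)_* F) = \Gamma_c(g_n^{-1}(S), F)$.
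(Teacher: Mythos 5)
Your proof is correct and takes essentially the same approach as the paper: the paper's entire proof is the single sentence ``This follows from the properness of $g_n$,'' and your argument is exactly the detailed unpacking of that fact. Specifically, you use properness twice in the standard way — closedness of $g_n$ to establish the support identity $\supp s = g_n(\supp \tilde s)$, and compactness of preimages of compact sets for the converse direction — which is precisely what the paper leaves implicit.
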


\begin{proof}
This follows from the properness of $g_n$.
\end{proof}

Now, let $F$ be a sheaf of commutative groups over $T$, and put $F^n = (g_n)_*(g_n^* F)$.
This is a soft sheaf on $T$ by Proposition~\ref{prop:dir-img-soft}.
Moreover, pullback along the structure maps of $(T_n)_n$ as a simplicial space induces a structure of cosimplicial object on $(F^n)_{n=0}^\infty$ in the category of sheaves over $T$.
Namely, let $d^n_i\colon T_n \to T_{n-1}$ and $s^n_i\colon T_n \to T_{n+1}$ be the face and degeneracy maps of $T_\bullet$.
Then for any open set $U \subset T$,
\[
\delta^n_i|_U\colon \Gamma(g_{n-1}^{-1}(U), g_{n-1}^* F) = \Gamma(U, F^{n-1}) \to \Gamma(g_{n}^{-1}(U), g_n^* F) = \Gamma(U, F^n), \quad \sigma \mapsto \sigma \circ d^n_i,
\]
defines a morphism $\delta^n_i \colon F^{n-1} \to F^n$.
Similarly we get $\sigma^n_i \colon F^{n+1} \to F^n$ from $s_i^n$, and we obtain a cosimplicial structure on $(F^n)_{n=0}^\infty$.

\begin{proposition}\label{prop:cosimp-res}
The cochain complex of $(F^n)_{n=0}^\infty$ is a resolution of $F$.
\end{proposition}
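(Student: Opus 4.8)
The plan is to prove exactness of the augmented complex
\[
0 \to F \to F^0 \to F^1 \to F^2 \to \cdots
\]
stalkwise, since a sequence of sheaves on $T$ is exact if and only if it is exact on every stalk; the $G$-equivariant structure plays no role in this verification. So I fix a point $x \in T$ and compute $(F^n)_x$ together with the cofaces $\delta^n_i$. First I would describe the fibers: because $g$ is proper, surjective, and at most $N$-to-one, the fiber $S = g^{-1}(x)$ is a finite \emph{nonempty} set, and hence $g_n^{-1}(x) = S^{n+1}$ is finite and nonempty as well. Since $T_n$ is second countable, locally compact, Hausdorff, and totally disconnected, the finitely many points of $g_n^{-1}(x)$ are separated by clopen sets, so for a sufficiently small neighborhood $U$ of $x$ the preimage $g_n^{-1}(U)$ is a disjoint union of clopen neighborhoods of these points. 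Taking the colimit over such $U$ (equivalently, invoking the proper base change theorem in the degree-zero form already used in the proof of Proposition \ref{prop:sheaf-coh-emb-to-skel}, and using the local section description from the proof of Proposition \ref{prop:dir-img-soft}) yields a natural isomorphism
\[
(F^n)_x = \bigl((g_n)_* g_n^* F\bigr)_x \cong \bigoplus_{y \in g_n^{-1}(x)} (g_n^* F)_y \cong \bigoplus_{y \in S^{n+1}} F_x.
\]

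Next I would identify the resulting stalk complex with a standard combinatorial object. Writing $A = F_x$, under the isomorphism above the coface $\delta^n_i$ becomes precomposition with the face map $S^{n+1} \to S^n$ that deletes the $i$-th coordinate, so the stalk complex is exactly the cochain complex $\operatorname{Map}(S^{\bullet + 1}, A)$ of the simplicial set $ES$ whose set of $n$-simplices is $S^{n+1}$ (faces delete, degeneracies repeat coordinates), with coefficients in $A$. The augmentation $F \to F^0$ becomes, at the stalk, the map $A \to \operatorname{Map}(S, A)$ sending an element to the corresponding constant function. Thus the proposition reduces to the acyclicity of this augmented complex.

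This last point is where the only genuine content lies, and it is the contractibility of $ES$. Since $S \neq \emptyset$ I may fix a base point $\ast \in S$; the extra degeneracy $(y_0, \dots, y_n) \mapsto (\ast, y_0, \dots, y_n)$ exhibits a simplicial contraction of $ES$, which dualizes to an explicit contracting cochain homotopy $h^n(\tau)(y_1, \dots, y_n) = \tau(\ast, y_1, \dots, y_n)$ satisfying the usual relation $\delta\, h + h\, \delta = \mathrm{id}$ together with the augmentation in low degrees. Consequently the stalk complex has cohomology $A$ concentrated in degree zero, with the augmentation identifying $A$ with $\ker\bigl((F^0)_x \to (F^1)_x\bigr)$. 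As $x$ was arbitrary, the augmented complex $0 \to F \to F^\bullet$ is exact, i.e.\ $(F^n)_{n \geq 0}$ is a resolution of $F$.

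The main obstacle is securing the stalk computation of the direct image in the first paragraph: one must check carefully that total disconnectedness and properness really produce the clopen splitting of $g_n^{-1}(U)$, and, crucially, that the identification $(F^n)_x \cong \bigoplus_{y \in S^{n+1}} F_x$ is natural in the cosimplicial variable, so that the differentials genuinely coincide with the coface alternating sums of $ES$. Once this naturality is in place, the acyclicity argument via the extra degeneracy is purely formal.
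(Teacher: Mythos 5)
Your proof is correct and takes essentially the same approach as the paper's: both verify exactness stalkwise, identify the stalk of $F^n$ at $t$ with $C(Z_t^{n+1}, F_t)$ for the finite fiber $Z_t = g^{-1}(t)$, and conclude from the standard acyclicity of the augmented simplicial cochain complex on $(Z_t^{n+1})_{n\ge 0}$. The only difference is one of detail: the paper asserts the stalk identification outright and calls the exactness ``well-known,'' whereas you justify the identification (via properness, finite fibers, and the clopen splitting / proper base change) and exhibit the explicit extra-degeneracy contracting homotopy, which is exactly what the paper's ``well-known'' fact amounts to.
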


\begin{proof}
We need to check that, for each $t \in T$, the sequence of stalks
\[
0 \to F_t \to F^0_t \to F^1_t \to \dots
\]
is exact.
Put $Z_t = g_0^{-1}(t)$, which is a nonempty finite set by our assumption on $g_0 = g$.
The structure of $T_\bullet$ restricts to the standard simplicial structure on the direct products $(Z_t^{n+1})_{n=0}^\infty$, which in turn gives the above complex via the identification $F_t^n = C(Z_t^{n + 1}, F_t)$.
This simplicial set is contractible: for example, by fixing a basepoint $z \in Z_t$, we obtain an `extra degeneracy' map
\[
s_{-1} \colon Z_t^{n + 1} \to Z_t^{n + 2}, \quad (z_0, \dots, z_n) \mapsto (z, z_0, \ldots, z_n),
\]
in the sense of~\cite{MR2840650}*{Section III.5}, which implements a homotopy to the singleton.
From this we obtain the exactness of the above complex.
\end{proof}

Now, we further assume that $g$ is at most $N$-to-one.
Let us describe a soft resolution of bounded length, given by the ``alternating'' variation of the above construction.
The natural action of $S_{n+1}$ on $T_n$ induces an action on $F^n$.
We define a subsheaf $F^{n,a} \subset F^n$ as follows:
\[
\Gamma(U, F^{n,a}) = \{ x \in \Gamma(U, F^n) \mid \supp x \subset T_n^f, \forall s \in S_{n+1}\colon s x = (-1)^{\absv{s}} x \}.
\]

\begin{proposition}
The sheaf $F^{n,a}$ is again c-soft.
\end{proposition}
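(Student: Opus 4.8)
The plan is to realize $F^{n,a}$ as a proper direct image of a sheaf living on a totally disconnected space, and then to conclude by the very same mechanism used in Proposition~\ref{prop:dir-img-soft}. The obstruction to a naive argument is integrality: since we work with sheaves of commutative groups, there is no antisymmetrizing idempotent $\frac{1}{(n+1)!}\sum_{s}(-1)^{\absv{s}}s$ available over $\Z$, so one cannot split off the alternating part as a direct summand of $F^n$ and simply inherit c-softness. The key observation is that the support condition $\supp x \subset T_n^f$ built into the definition of $F^{n,a}$ is exactly what repairs this: on the free locus $T_n^f$ the $S_{n+1}$-action is free, so alternating sections supported there descend to honest sections on the quotient, with no division required.

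Concretely, I would first form the quotient $\bar T_n = T_n / S_{n+1}$ with quotient map $p\colon T_n \to \bar T_n$. Since $T_n$ is second countable, locally compact, Hausdorff and totally disconnected and $S_{n+1}$ is finite, $\bar T_n$ inherits all of these properties, and the $S_{n+1}$-invariant map $g_n$ factors as $g_n = \hat g_n \circ p$ through a proper, finite-to-one map $\hat g_n\colon \bar T_n \to T$. Next, writing $j\colon T_n^f \hookrightarrow T_n$ for the open inclusion and setting $\mathcal{H} = g_n^* F|_{T_n^f}$, I would form the extension by zero $j_! \mathcal{H}$; this is an $S_{n+1}$-equivariant sheaf on $T_n$ because $T_n^f$ and $\mathcal{H}$ are $S_{n+1}$-equivariant. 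Pushing forward along the invariant map $p$ yields an $S_{n+1}$-action on $p_*(j_!\mathcal{H})$ by automorphisms over $\bar T_n$, and I would let $\mathcal{K} \subset p_*(j_! \mathcal{H})$ be the alternating subsheaf cut out by the relations $s\cdot y = (-1)^{\absv{s}} y$.

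The heart of the argument is the identification $F^{n,a} \cong (\hat g_n)_* \mathcal{K}$ as sheaves on $T$. Unwinding the definitions over an open set $U \subset T$, sections of $(\hat g_n)_*\mathcal{K}$ over $U$ are the $S_{n+1}$-alternating sections of $j_! \mathcal{H}$ over the invariant open set $g_n^{-1}(U) = p^{-1}(\hat g_n^{-1}(U))$; by the defining property of $j_!$ these are precisely the sections of $g_n^* F$ over $g_n^{-1}(U)$ whose support lies in $T_n^f$ and which are alternating, that is, $\Gamma(U, F^{n,a})$. Here the support condition is crucial, as it is what makes $j_!$ (rather than $j_*$ or the full pushforward) the correct functor and keeps the construction $\Z$-integral.

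Once this identification is in place the conclusion is immediate: $\bar T_n$ is second countable, locally compact, Hausdorff and totally disconnected, so $\mathcal{K}$ is soft by the same reasoning as in Proposition~\ref{prop:dir-img-soft}; and $\hat g_n$ is proper, so the argument of Proposition~\ref{prop:dir-img-soft} applies verbatim to show that $(\hat g_n)_* \mathcal{K} = F^{n,a}$ is c-soft. I expect the main obstacle to be conceptual rather than computational, namely recognizing that the alternating-plus-support condition is encoded by the pair consisting of extension by zero from $T_n^f$ and passage to the quotient $\bar T_n$, which converts an operation ill-behaved over $\Z$ into an ordinary proper pushforward to which the earlier proposition directly applies.
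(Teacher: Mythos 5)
Your proof is correct, but it takes a genuinely different route from the paper's. The paper argues directly on $T$: given a section of $F^{n,a}$ over a compact set $S \subset U$, it first extends it to a section $\tilde x_0$ of $F^n$ on a neighborhood of $S$ using the already established c-softness of $F^n$ (Proposition~\ref{prop:dir-img-soft}), and then shrinks by a germ argument --- since the alternating condition and the support condition hold at every stalk over $g_n^{-1}(S)$, they persist on an $S_{n+1}$-invariant open neighborhood $V'$ of $g_n^{-1}(S)$, and properness produces a neighborhood $V$ of $S$ with $g_n^{-1}(V) \subset V'$ on which $\tilde x_0$ restricts to a section of $F^{n,a}$. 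You instead re-realize $F^{n,a}$ itself as a proper direct image from a totally disconnected space: your identification $F^{n,a} \cong (\hat g_n)_* \mathcal{K}$ is correct, since the extension by zero $j_!\mathcal{H} \subset g_n^* F$ is exactly the subsheaf of sections supported in $T_n^f$, and $\Gamma(\hat g_n^{-1}(U), \mathcal{K})$ unwinds precisely to the definition of $\Gamma(U, F^{n,a})$; c-softness then follows by running the proof of Proposition~\ref{prop:dir-img-soft} verbatim for $\hat g_n \colon \bar T_n \to T$. What your approach buys: it subsumes the statement under the earlier proposition with no new pointwise extension argument, avoids the delicate step of arranging an invariant neighborhood on which the closed conditions hold, and the description of $F^{n,a}$ as $(\hat g_n)_*\mathcal{K}$ is reusable (for instance it immediately yields the analogue of Proposition~\ref{prop:cpt-section-dir-img} for $F^{n,a}$, which is what feeds into the definition of the homology). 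What it costs: the point-set facts you assert in passing do need verification --- that $\bar T_n = T_n/S_{n+1}$ is again locally compact, Hausdorff, second countable and totally disconnected (invariant clopen subsets of $T_n$ have clopen images under the open proper map $p$, and these separate orbits), that $\hat g_n$ is proper, and that the alternating condition cuts out a genuine subsheaf (an intersection of kernels of the sheaf endomorphisms $y \mapsto s\cdot y - (-1)^{\absv{s}} y$); all are true and routine. One point you handle correctly and that deserves emphasis: softness of $\mathcal{K}$ comes from the fact that \emph{every} sheaf on the totally disconnected space $\bar T_n$ is soft, not from $\mathcal{K}$ being a subsheaf of a soft sheaf --- subsheaves of soft sheaves need not be soft, so this is exactly the right place to invoke total disconnectedness.
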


\begin{proof}
As in the proof of Proposition~\ref{prop:dir-img-soft}, let $U$ be a relatively compact open set of $T$, and $S$ be a compact subset of $U$.
Given a section $x \in \Gamma(S, F^{n,a})$, we need to find an extension $\tilde x \in \Gamma(V, F^{n,a})$ of $x$ to some open neighborhood $V$ of $S$.

By Proposition~\ref{prop:dir-img-soft}, we can find an open neighborhood $V_0$ of $S$ and a section $\tilde x_0 \in \Gamma(V_0, F^n)$ restricting to $x$.
Let $\bar x_0 \in \Gamma(g_n^{-1}(V_0), g_n^* F)$ be the section corresponding to $\tilde x_0$.
It is enough to find a $S_{n+1}$-invariant open neighborhood $V'$ of $g_n^{-1}(S)$ in $g_n^{-1}(V_0)$ such that the restriction $\bar x_0|_{V'}$ satisfies $s(\bar x_0|_{V'}) = (-1)^{\absv{s}} \bar x_0|_{V'}$ for $s \in S_{n+1}$.
Indeed, the argument of the proof of Proposition~\ref{prop:dir-img-soft} gives a neighborhood $V$ of $S$ satisfying $g_n^{-1}(V) \subset V'$, and the restriction of $\tilde x_0$ to $V$ is a section of $F^{n,a}$.

For each $u \in g_n^{-1}(S)$, find an open neighborhood $V_u$ such that $s^* \tilde x_0 = (-1)^{\absv{s}} \tilde x_0$ holds on $V_u$ for any $s \in S_{n+1}$.
Moreover, if $u$ is not in $T_n^f$, we ask $\tilde x_0$ to be trivial on $V_u$.
This is possible since the stalks $s^* \tilde x_0(u)$ are the inductive limits of $s^* \tilde x_0$ around open neighborhoods of $u$.
By compactness there are finitely many $u_1, \dots, u_k$ such that $V' = \bigcup_i V_{u_i}$ covers $g_n^{-1}(S)$.
\end{proof}

A standard argument shows that $(F^{n,a})_n$ is a subcomplex of $(F^n)_n$.
By assumption about the support and $T_n^f$, this is a bounded complex.

\begin{proposition}
The complex $(F^{n,a})_n$ is still a resolution of $F$.
\end{proposition}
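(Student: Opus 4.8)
The plan is to verify exactness stalk-by-stalk, exactly as in the proof of Proposition~\ref{prop:cosimp-res}, and then to identify the resulting complex of stalks with the augmented simplicial cochain complex of a standard simplex, whose exactness is the contractibility of a simplex.

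Fix $t \in T$ and set $Z_t = g^{-1}(t)$, a finite set by properness of $g$. As in Proposition~\ref{prop:cosimp-res}, properness together with the finiteness of the fiber $g_n^{-1}(t) = Z_t^{n+1}$ gives $F^n_t = C(Z_t^{n+1}, F_t) = \bigoplus_{w \in Z_t^{n+1}} F_t$. The first step is to compute the subgroup $(F^{n,a})_t$ inside this. Since $T_n^f$ is open in $T_n$ and meets the finite discrete fiber $Z_t^{n+1}$ exactly in the tuples with pairwise distinct entries, the support condition $\supp x \subseteq T_n^f$ descends at the stalk to the vanishing of a function on all tuples with a repeated entry, while continuity of the $S_{n+1}$-action lets the alternating condition $s x = (-1)^{\absv{s}} x$ pass to the stalk unchanged. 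Hence $(F^{n,a})_t$ is the group of alternating functions $f \in C(Z_t^{n+1}, F_t)$ supported on injective $(n+1)$-tuples from $Z_t$. Note that, because we have restricted to free $S_{n+1}$-orbits, no rationalization is needed here (in contrast to Lemma~\ref{lem:triv-alt-for-non-free-action}): choosing the value on one representative of a free orbit determines it on the whole orbit with no consistency constraint, as the stabilizer is trivial.

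Next I identify this complex with a familiar one. Choosing a total order on $Z_t$, such an alternating function is freely determined by its values on increasing tuples, so $(F^{n,a})_t \cong \bigoplus_S F_t$ where $S$ ranges over the $(n+1)$-element subsets of $Z_t$; in particular it vanishes once $n+1 > \absv{Z_t}$, consistent with boundedness. These are precisely the $n$-cochains of the full simplex $\Delta(Z_t)$ on vertex set $Z_t$ with coefficients in $F_t$, and under the face maps the differential $\delta = \sum_{i=0}^{n} (-1)^i \delta^n_i$ becomes the simplicial coboundary
\[
(\delta f)(x_0, \dots, x_n) = \sum_{i=0}^{n} (-1)^i f(x_0, \dots, \widehat{x_i}, \dots, x_n).
\]
Deleting a coordinate of an injective tuple again yields an injective tuple, so $\delta$ preserves the support condition. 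Together with the augmentation $F_t \to (F^{0,a})_t = C(Z_t, F_t)$ sending $a$ to the constant function, the complex $0 \to F_t \to (F^{0,a})_t \to (F^{1,a})_t \to \cdots$ is thus exactly the augmented simplicial cochain complex of $\Delta(Z_t)$.

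Finally, since $\Delta(Z_t)$ is a non-empty simplex, hence contractible, its reduced cohomology $\widetilde H^\bullet(\Delta(Z_t), F_t)$ vanishes; equivalently the augmented complex above is exact (the injectivity of the augmentation uses that $Z_t$ is non-empty, i.e.~the same surjectivity hypothesis underlying Proposition~\ref{prop:cosimp-res}). As $t$ was arbitrary, $(F^{n,a})_n$ is a resolution of $F$. The only genuinely delicate point is the computation of the stalk $(F^{n,a})_t$: one must check that taking the stalk, imposing $\supp \subseteq T_n^f$, and extracting the sign-alternating part are mutually compatible, which rests on properness (finiteness of the fibers) and on $T_n^f$ being open with clopen trace on each fiber. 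Once this identification is in place, the exactness is the standard contractibility of a simplex and requires no further computation.
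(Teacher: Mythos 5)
Your proof is correct and follows essentially the same route as the paper's: both verify exactness stalkwise, both identify the stalk of $F^{n,a}$ at $t$ with the alternating functions on $Z_t^{n+1}$ supported on injective tuples, and both conclude by a contractibility argument. Where the paper cites the normalization theorem (Eilenberg--Steenrod VI.6.9) together with contractibility of the simplicial set $(Z_t^{n+1})_{n}$, you instead order $Z_t$ and identify the stalk complex with the oriented cochain complex $C^\bullet(\Delta(Z_t); F_t)$ of the full simplex; this is a hands-on unpacking of that same normalization step rather than a genuinely different argument.

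One point needs correcting. Your justification that $\delta$ preserves the support condition (``deleting a coordinate of an injective tuple again yields an injective tuple'') is not the relevant check: what must be shown is that $\delta f$ vanishes on tuples with a repeated entry, and faces of such a tuple \emph{can} be injective. The correct reason is a cancellation: if $x_j = x_k$ with $j < k$, then all faces other than the $j$-th and $k$-th still have a repeated entry, so $f$ vanishes on them by the support condition, while the $j$-th and $k$-th faces differ by a cycle of sign $(-1)^{k-j-1}$, so by alternation their contributions $(-1)^j f(\cdot)$ and $(-1)^k(-1)^{k-j-1}f(\cdot)$ cancel. (What your observation actually shows is that restriction to increasing tuples intertwines the two differentials, i.e.\ that your identification with $C^\bullet(\Delta(Z_t); F_t)$ is a chain map.) This slip does not invalidate your exactness argument, since the subcomplex property is exactly the ``standard argument'' the paper records separately just before the proposition, but as written your proof conflates the two statements.
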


\begin{proof}
Again we need to check the exactness of the augmented complex of stalks,
\[
0 \to F_t \to F^{0,a}_t \to F^{1,a}_t \to \dots \to F^{N,a}_t \to 0.
\]
The stalk of $F^{n,a}$ at $t$ is given by
\[
F^{n,a}_t = \{f \in C(Z_t^{n+1}, F_t) \mid \supp f \subset Z_t^{n+1} \smallsetminus D_{n+1}, \forall s \in S_{n+1} \colon s f = (-1)^{\absv{s}} f \}
\]
for $Z_t = g_0^{-1}(t)$, where $D_{n+1}$ is the degenerate part (the union of non-free orbits).
A usual normalization argument for simplicial complexes (for example~\cite{eilstee:algtop}*{Theorem VI.6.9}), together with the fact that $(Z_t^{n+1})_{n=0}^\infty$ is a contractible simplicial set, shows that the above augmented complex is indeed exact.
\end{proof}

Now, suppose that $G$ is an étale groupoid, and $T_0$ is a totally disconnected $G$-space with a proper structure map $g\colon T_0 \to T$ which is at most $N$-to-one.
If $F$ is a $G$-sheaf of commutative groups on $T$, the above construction $F^{\bullet, a}$ gives a c-soft resolution of $G$-sheaves.
Thus, the groupoid homology for $G$-sheaves are definable.
Moreover, we have
\begin{equation}\label{eq:cpt-sec-on-G-k}
\Gamma_c((G \ltimes T_n)^{(k)}, s^* g_n^* F) \cong \Gamma_c(G^{(k)}, s^* F^n)
\end{equation}
by Proposition~\ref{prop:cpt-section-dir-img}.
This allows us to calculate $H_\bullet(G, F)$, as follows.

\begin{proposition}\label{prop:grpd-homology-from-ubij-map}
Let $F$ be a $G$-sheaf of commutative groups.
Then the groupoid homology $H_\bullet(G, F)$ is well-defined as the homology of the double complex with components 
\[
\Gamma_c(G^{(k)}, s^* F^{n,a}),\qquad n,k \ge 0
\]
with total degree $k - n$.
\end{proposition}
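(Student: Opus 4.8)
The plan is to recognise the double complex in the statement as the defining complex of Crainic--Moerdijk homology $H_\bullet(G,F)$ evaluated against the specific c-soft resolution $(F^{n,a})_n$, so that the proposition reduces to the existence of this resolution together with the independence of the homology from the chosen resolution. The first task is therefore to check that $(F^{n,a})_n$ is a resolution of $F$ \emph{by $G$-sheaves}. Since $T_0$ is a $G$-space, each fibre product $T_n$ carries the diagonal $G$-action, under which an arrow of $G$ acts by fibrewise bijections; this action commutes with the permutation action of $S_{n+1}$ and preserves the free locus $T_n^f$, while the structure map $g_n\colon T_n \to T$ is $G$-equivariant. Hence $g_n^* F$ is a $(G\ltimes T_n)$-sheaf, its pushforward $F^n = (g_n)_*(g_n^* F)$ is a $G$-sheaf, and the sign and support conditions cutting out $F^{n,a}\subset F^n$ are $G$-invariant, so each $F^{n,a}$ is a $G$-subsheaf.

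With the equivariance in place, I would invoke the preceding propositions directly: each $F^{n,a}$ is c-soft and the augmented stalk complex $0\to F\to F^{0,a}\to\dots\to F^{N,a}\to 0$ is exact, so $(F^{n,a})_{n=0}^N$ is a \emph{bounded} c-soft resolution of $F$ by $G$-sheaves. Applying the nerve functor $\Gamma_c(G^{(k)}, s^* -)$ termwise to this resolution reproduces precisely the double complex of the statement, in which the resolution degree $n$ contributes with the opposite sign to the nerve degree $k$; this yields total degree $k-n$, in agreement with the convention $C_i^j = \Gamma_c(G^{(i)}, s^* F^j)$ of homological degree $i-j$ used to define $H_\bullet(G,F)$.

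It remains to identify the homology of this double complex with $H_\bullet(G,F)$, and here the crux is the independence of Crainic--Moerdijk homology from the chosen c-soft resolution. This is exactly the well-definedness of the hyperhomology $\bH_\bullet(G,-)$ up to quasi-isomorphism recorded in the derived functor formalism of \cite{cramo:hom} (the case of $\cL\phi_!$ for $\phi$ the unique homomorphism from $G$ to the trivial groupoid): since $F$ is quasi-isomorphic to the complex $(F^{n,a})_n$, and since each c-soft $F^{n,a}$ serves as its own c-soft resolution so that the auxiliary sheaf direction is trivial, the homology of the double complex computes $H_\bullet(G,F) = \bH_\bullet(G,F)$. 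The substantive content of the statement thus lies entirely in the construction of the bounded c-soft $G$-sheaf resolution carried out in the preceding propositions; within the present proof the only points demanding care are the degree bookkeeping in collapsing the triple complex to the double complex, and the compatibility of the alternating projection with the $G$-action, both of which are routine once $T_n^f$ is seen to be $G$-invariant.
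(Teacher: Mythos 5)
Your proposal is correct and follows essentially the same route as the paper, which in fact states this proposition with no separate proof, treating it as an immediate consequence of the preceding propositions (that $(F^{n,a})_n$ is a bounded c-soft resolution of $F$) combined with the Crainic--Moerdijk definition of $H_\bullet(G,F)$ via c-soft resolutions and its quasi-isomorphism invariance. Your additional verification that each $F^{n,a}$ is genuinely a $G$-subsheaf (diagonal $G$-action on $T_n$ commuting with $S_{n+1}$ and preserving $T_n^f$, equivariance of $g_n$ and of the pushforward) is exactly the point the paper leaves implicit via its remark that $g_*$ exists for groupoid-equivariant sheaves, and your degree bookkeeping matches the paper's convention $C_i^j = \Gamma_c(G^{(i)}, s^*F^j)$ of homological degree $i-j$.
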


\section{Comparison of homologies}
\label{sec:comp-hom}

Now we are ready to apply the above machinery to Smale spaces.
Let us start with some preliminaries.
With definitions as in Section~\ref{sec:homolog-Sm-sp}, we fix an $s/u$-bijective pair $(Y, \psi, f, Z, \zeta, g)$ over a Smale space $(X, \phi)$.
Consider a subspace $T \subseteq X$ and suppose it is a transversal for the unstable equivalence relation.
By Proposition~\ref{prop:grpd-homology-from-ubij-map}, the étale groupoid $G = R^u(X, \phi)|_T$ indeed satisfies the assumption about $G$-sheaves to define homology.

\begin{proposition}\label{prop:lift-transv}
We can choose $T$ so that $T_n = g_n^{-1}(T)$ is a transversal for the unstable equivalence relation on $Z_n$ for all $n$.
\end{proposition}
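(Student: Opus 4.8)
The plan is to take for $T$ a transversal of the concrete form used in \cite{put:spiel}, namely a finite union of local stable sets $X^s(x_j,\epsilon_j)$ (the local stable set of $x_j$) that meets every unstable equivalence class, and then to verify that its preimage under each $g_n$ is again of this form inside $Z_n$. Since $(Z_n,\zeta_n)$ has totally disconnected stable sets and $g_n\colon Z_n\to X$ is $u$-bijective, the whole question is local in the unstable direction, so I would reduce to computing the preimage $g_n^{-1}(\text{slice})$ of a single local stable slice through a point of $X$. The point of insisting on this concrete form of $T$ (rather than an arbitrary abstract transversal) is precisely that it makes such a local computation available.

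The key local step uses that factor maps of Smale spaces commute with the bracket, so $g_n([a,b])=[g_n(a),g_n(b)]$. Fix $z_0\in Z_n$ with $x_0=g_n(z_0)$ and pass to local product coordinates, identifying a neighborhood of $z_0$ with $\{[a,b]\mid a\in Z_n^s(z_0,\epsilon),\ b\in Z_n^u(z_0,\epsilon)\}$ and similarly around $x_0$. Because $g_n$ is $u$-bijective it restricts to a homeomorphism $Z_n^u(z_0,\epsilon)\to X^u(x_0,\epsilon')$ and carries $Z_n^s(z_0,\epsilon)$ into $X^s(x_0,\epsilon')$; thus in these coordinates $g_n$ has the product form $g^s\times g^u$ with $g^u$ invertible. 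A local stable slice of $T$ near $x_0$ has a fixed unstable coordinate $b_0$, so its preimage is cut out by the single equation $g^u(b)=b_0$, whose unique solution is $b=(g^u)^{-1}(b_0)$, \emph{independently of} $a$. Hence, near $z_0$, the set $g_n^{-1}(T)$ is exactly the full local stable slice $\{[a,(g^u)^{-1}(b_0)]\mid a\in Z_n^s(z_0,\epsilon)\}$, again transverse to the unstable direction; note that this needs nothing about injectivity or surjectivity of $g^s$.

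Assembling, since the stable sets of $Z_n$ are totally disconnected these slices are totally disconnected, so $T_n=g_n^{-1}(T)$ is a totally disconnected subspace of $Z_n$ which is locally a local stable set and hence transverse to $R^u(Z_n,\zeta_n)$. It is complete: $g_n$ is surjective and restricts to a homeomorphism on every unstable set, so each unstable set of $Z_n$ maps onto an unstable set of $X$, which meets $T$; hence that set meets $T_n$. Therefore $R^u(Z_n,\zeta_n)|_{T_n}$ carries an ample étale topology and is Morita equivalent to $R^u(Z_n,\zeta_n)$, i.e.\ $T_n$ is a transversal, by the general criterion of \cite{put:spiel}. As the argument is uniform in $n$, the single choice of $T$ works for all $n$ simultaneously.

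The main obstacle I expect is not the leaf-wise bijection—which is immediate from $u$-bijectivity—but the global bookkeeping needed to promote the pointwise statement to a genuine transversal: one must take the defining local stable sets of $T$ small enough and invoke properness of $g_n$ (so that $T_n$ is compact and the local-product charts patch) to guarantee that the local stable slices comprising $g_n^{-1}(T)$ assemble into a Hausdorff, totally disconnected space on which $R^u(Z_n,\zeta_n)|_{T_n}$ is étale and the Morita equivalence of \cite{put:spiel} applies. A secondary point is simply that $T$ must be fixed of the concrete local-stable-set form, since the preimage computation is meaningless for an unstructured abstract transversal.
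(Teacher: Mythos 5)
Your proposal is correct and takes essentially the same approach as the paper: choose $T$ of local-stable-set form, observe that $g_n^{-1}(T)$ is again a union of local stable sets, and get completeness by lifting unstable equivalence through the $u$-bijective map $g_n$ — the last step being word-for-word the paper's argument. The only differences are cosmetic: the paper cites \cite{put:notes}*{Lemma 5.2.10} for the local structure of the preimage (which you re-derive directly from bracket-compatibility and the product form $g^s\times g^u$), and it takes $T$ to be a single local stable set around a periodic point, handling non-mixing systems via Smale's decomposition theorem, whereas you start from the finite-union form of \cite{put:spiel} which builds that reduction in from the outset.
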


\begin{proof}
Suppose first that $(X,\phi)$ is mixing (in particular any stable or unstable orbit is dense). We choose a periodic point $x_0 \in X$, and $T$ to be a local stable set around $x_0$.
Then $T_n$ is a union of local stable sets around the points of $g_n^{-1}(x_0)$~\cite{put:notes}*{Lemma 5.2.10}.
Now, let $z \in Z_n$.
Since $T$ is a transversal, we can find $x_1 \in T$ such that $g_n(z) \sim_u x_1$.
As $g_n$ is $u$-bijective, we can find $z_1 \sim_u z$ such that $g_n(z_1) = x_1$.

If the system is not mixing, we use Smale's decomposition theorem, which can be found in~\cite{put:funct}*{Theorem 2.1 and the following discussion}, and apply the argument above to each mixing factor resulting from the decomposition.
\end{proof}

In the following we will always use $T$ satisfying the claim of Proposition~\ref{prop:lift-transv}.
Put $G = R^u(X, \phi)|_T$.

\begin{proposition}
The transversal $T_n \subset Z_n$ has an action of $G$ with anchor map $g_n\colon T_n \to T$.
We have $G \ltimes T_n \cong R^u(Z_n, \zeta_n)|_{T_n}$.
\end{proposition}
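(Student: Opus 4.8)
The plan is to establish the two claims separately, beginning with the groupoid structure on $T_n$. First I would recall that the $G$-action on $T$ comes from the unstable equivalence relation groupoid $R^u(X,\phi)$ restricted to the transversal. The anchor map $g_n \colon T_n \to T$ is the restriction of the factor map, and since $g_n$ is $u$-bijective, for each arrow $\gamma \in G$ from $g_n(z)$ to some point $x' \in T$ (i.e.\ an unstable equivalence $g_n(z) \sim_u x'$) there is a \emph{unique} lift to an unstable equivalence $z \sim_u z'$ with $g_n(z') = x'$. I would use this unique-lifting property, which is exactly the content of $u$-bijectivity, to define the action $z \cdot \gamma = z'$, and check it is continuous and free of ambiguity. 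The fact that $T_n$ is a transversal for the unstable equivalence relation on $Z_n$ (established in Proposition \ref{prop:lift-transv}) guarantees that $z'$ again lands in $T_n$, so the action is well-defined on the nose.

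Next I would identify the transformation groupoid $G \ltimes T_n$ with $R^u(Z_n, \zeta_n)|_{T_n}$. The natural map sends a pair $(z, \gamma) \in T_n \times_T G$ to the arrow in $R^u(Z_n, \zeta_n)$ going from $z$ to the lift $z \cdot \gamma$. I would verify this is a groupoid homomorphism: composition of arrows in $G \ltimes T_n$ matches composition of unstable equivalences, because the lifts compose uniquely. For bijectivity, injectivity follows from the uniqueness of lifts (two pairs giving the same $R^u(Z_n)$-arrow must have the same source, target, and hence the same $\gamma = g_n(\text{arrow})$), while surjectivity uses that any arrow $z \sim_u w$ in $R^u(Z_n, \zeta_n)|_{T_n}$ projects under $g_n$ to an arrow $g_n(z) \sim_u g_n(w)$ in $G$, and $w$ is necessarily the unique lift. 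Finally I would check this bijection is a homeomorphism, which amounts to comparing the étale topologies on both sides; this is where the local homeomorphism property of $g_n$ (coming from properness and finite-to-one-ness) ensures the lift depends continuously on $(z, \gamma)$.

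The main obstacle I expect is the topological identification rather than the algebraic one. Establishing the set-theoretic isomorphism is essentially a formal consequence of $u$-bijectivity, but confirming it respects the étale structures requires care: one must know that the restriction of $R^u(Z_n, \zeta_n)$ to the transversal $T_n$ inherits the correct étale topology (as guaranteed by the Putnam–Spielberg construction referenced earlier), and that the unique lift $z \mapsto z \cdot \gamma$ varies as a local homeomorphism. I would handle this by working locally, using that $g_n$ restricted to a small unstable neighborhood is a homeomorphism onto its image, so the lifting map is locally a composition of $g_n^{-1}$ with the $G$-action, both of which are continuous and open. Once the bicontinuity is secured, the isomorphism of étale groupoids follows, completing the proof.
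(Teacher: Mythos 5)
Your proposal follows essentially the same route as the paper's proof: the action is defined by the unique lifting of unstable equivalences provided by $u$-bijectivity of $g_n$, and conversely every arrow of $R^u(Z_n, \zeta_n)|_{T_n}$ is recognized as the lift of its image arrow $(g_n(z'), g_n(z)) \in G$, which is exactly all the paper records (the paper does not even spell out the topological verification you attempt).

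Two corrections to your justifications, however. First, a minor one: the lift $z'$ lands in $T_n$ not because $T_n$ is a transversal, but simply because $T_n = g_n^{-1}(T)$ by construction and $g_n(z') = x' \in T$. Second, and more substantively: your claim that $g_n \colon T_n \to T$ is a local homeomorphism ``coming from properness and finite-to-one-ness'' is false. A proper finite-to-one surjection need not be a local homeomorphism, and in this setting $g_n|_{T_n}$ essentially never is one: $T_n$ is totally disconnected (it is a union of local stable sets of $Z_n$, whose stable sets are totally disconnected), while $T$ is a local stable set of a general Smale space $X$ and need not be totally disconnected --- for a hyperbolic toral automorphism $T$ is locally an arc, and a totally disconnected space admits no local homeomorphism onto an arc (compare the binary-expansion map from a Cantor set onto an interval, which is proper and at most two-to-one). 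What actually rescues continuity is precisely the statement you make in your final paragraph: $u$-bijectivity makes $g_n$ a homeomorphism from local \emph{unstable} sets onto their images, so the lift $z' = (g_n|_{Z^u(z,\epsilon)})^{-1}(x')$ depends continuously on $(z, \gamma)$, and the local inverse is taken along the unstable direction, not on the transversal $T_n$ itself. That local statement is correct and sufficient; the global local-homeomorphism claim should be deleted, as it is both wrong and unnecessary.
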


\begin{proof}
Let $z \in T_n$, $x = g_n(z)$, and $(x', x) \in G$.
Since $g_n$ is $u$-bijective, there is a unique point $z' \in Z_n$ such that $z' \sim_u z$ and $g(z') = x'$.
We define the action of $(x', x)$ on $z$ to be this $z'$.
Conversely, if $z, z' \in T_n$ are unstably equivalent, $z'$ is the image of the action of $(g(z'), g(z))$ on $z$.
This gives the identification of groupoids.
\end{proof}

Recall that $g_n$ is proper on stable sets~\cite{put:notes}*{Theorem 5.2.5}.
In particular, $g_n$ is proper as a map $T_n \to T$.

Let us establish a generalization of Theorem~\ref{thm:grpd-hmlg-compar-tot-disconn}.

\begin{lemma}\label{lem:Sigma-L-M-is-fib-prod-of-Sigma-0-M}
For each $M$, the natural map $\Sigma_{0,M} \to Z_M$ is $s$-bijective.
Moreover, we have
\[
\Sigma_{L, M} = \underbrace{\Sigma_{0,M} \times_{Z_M} \dots \times_{Z_M} \Sigma_{0,M}}_{(L+1) \times}.
\]
\end{lemma}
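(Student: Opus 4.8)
The plan is to treat the two assertions separately: first identifying $\Sigma_{0,M}\to Z_M$ as a base change of the $s$-bijective map $f$, and then unwinding both sides of the claimed equality as explicit sets of tuples.

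For the first assertion, I would observe that $\Sigma_{0,M}=Y\times_X Z_M$ is the fiber product of $f\colon Y\to X$ (which is $s$-bijective) with $g_M\colon Z_M\to X$ (which is $u$-bijective, hence in particular a factor map). The natural map in question is the projection $\pi\colon \Sigma_{0,M}\to Z_M$, $(y,z)\mapsto z$, which manifestly intertwines $\sigma_{0,M}$ and $\zeta_M$ and is surjective. To see that it is $s$-bijective, I would check that $\pi$ restricts to a bijection between stable sets. Given $(y,z)$ and a point $z'\sim_s z$ in $Z_M$, the factor map $g_M$ yields $g_M(z')\sim_s g_M(z)=f(y)$; since $f$ is $s$-bijective it restricts to a bijection on stable sets, so there is a \emph{unique} $y'\sim_s y$ with $f(y')=g_M(z')$, giving the unique preimage $(y',z')\sim_s (y,z)$ of $z'$ under $\pi$. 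This is precisely the stability of $s$-bijectivity under base change along a factor map, which can alternatively be quoted directly from Putnam's foundational results on $s/u$-bijective pairs (in the same circle of ideas as \cite{put:notes}*{Theorem 5.2.15}).

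For the second assertion, I would unwind both sides as sets of tuples. The left-hand side $\Sigma_{L,M}=Y_L\times_X Z_M$ consists of tuples $(y_0,\dots,y_L,z)$ with $f(y_i)=g_M(z)$ for all $i$, while the $(L+1)$-fold fiber product of $\Sigma_{0,M}=Y\times_X Z_M$ over $Z_M$ (with respect to the projection $\pi$ above) consists of tuples $\bigl((y_0,z_0),\dots,(y_L,z_L)\bigr)$ with $f(y_i)=g_M(z_i)$, the fiber product over $Z_M$ forcing $z_0=\dots=z_L=z$. The assignment $(y_0,\dots,y_L,z)\mapsto \bigl((y_0,z),\dots,(y_L,z)\bigr)$ is then a homeomorphism with the obvious inverse (forgetting the redundant copies of $z$), and it intertwines the two dynamics since both are restrictions of the relevant products of $\psi$ and $\zeta_M$.

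I expect the second part to be routine fiber-product bookkeeping, the only care being that the identification respects the Smale space structure — which is automatic, as all the homeomorphisms involved are restrictions of products. The genuine content lies in the first part: either the explicit verification that $\pi$ is a bijection on stable sets via the unique lifting afforded by $f$, or, more economically, the citation of base-change stability of $s$-bijective maps, which is the crux of the argument.
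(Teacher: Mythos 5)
Your proposal is correct and follows essentially the same route as the paper: the first claim is reduced to the unique lifting of stable equivalences afforded by the $s$-bijectivity of $f$ (after pushing the equivalence $z' \sim_s z$ forward through the factor map to $X$), and the second claim is handled as routine fiber-product bookkeeping, which the paper likewise dismisses as obvious. The only cosmetic difference is that the paper writes points of $Z_M$ as tuples and notes that stable equivalence there is component-wise, whereas you work with $g_M$ abstractly; the substance is identical.
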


\begin{proof}
Let $z = (z_0, \dots, z_M)$ and $z' = (z'_0, \dots, z'_M)$ be points of $Z_M$ which are stably equivalent.
This condition is equivalent to component-wise equivalences
\[
z_0 \sim_s z'_0, \quad \dots, \quad z_M \sim_s z'_M.
\]

Suppose that we are given a point in the fiber $(\Sigma_{0,M})_{z}$, that is, a tuple of the form
\[
(y_0, z_0, \dots, z_M) \quad (f(y_0) = g(z_0) = \dots = g(z_M)).
\]
Write $x = f(y_0) = g(z_i)$ and $x' = g(z'_i)$.
Then there is a unique point $y'_0$ in the stable equivalence class of $y$ which maps to $x'$.
Then $(y'_0, z'_0, \dots, z'_M)$ is the unique point of $\Sigma_{L,M}$ stably equivalent to $(y_0, z_0, \dots, z_M)$ and maps to $z'$.
The relation between $\Sigma_{L, M}$ and the fibered product of $\Sigma_{0,M}$ over $Z_M$ is obvious.
\end{proof}

Now, take a transversal $T \subset X$ as in Proposition~\ref{prop:lift-transv}, and consider the étale groupoid $G = R^u(X, \phi)|_T$.
On the one hand, $T_i  = g_i^{-1}(T) \subset Z_i$ is a totally disconnected $G$-space.
On the other, by virtue of Proposition~\ref{prop:put35} the factor map $Y_j \to X$ defines an open subgroupoid $H_j < G$ for each $j$. Similarly, $\Sigma_{j, i} \to Z_i$ defines an open subgroupoid $H_{j, i} < 
G \ltimes T_i$.

\begin{proposition}
The groupoid $H_{j}$ is Morita equivalent to $H_0^{\times_G (j+1)}$.
Moreover, $H_{j, i}$ is the transformation groupoid $H_j \ltimes T_i$, which is also Morita equivalent to the multiple groupoid pullback of $(j+1)$ copies of $H_{0, i}$ over $G \ltimes T_i$.
\end{proposition}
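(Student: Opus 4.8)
The plan is to treat the three assertions in order and to deduce the last one formally from the first two. Throughout I regard $H_0$ as the open subgroupoid of $G$ associated to $Y = Y_0$ via the $s$-bijective map $f$, and I exploit that, by definition, $Y_j = Y \times_X \dots \times_X Y$ is a strict fiber product of $(j+1)$ copies of $Y$ over $X$.

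For the first assertion the key observation is that unstable equivalence in a fiber product of Smale spaces is computed componentwise: two points $(y_0, \dots, y_j)$ and $(y'_0, \dots, y'_j)$ of $Y_j$ are unstably equivalent if and only if $y_k \sim_u y'_k$ for every $k$. Hence $R^u(Y_j, \psi_j)$ is the $(j+1)$-fold fiber product of $R^u(Y, \psi)$ over $R^u(X, \phi)$, and restricting to an appropriate transversal $T_j^Y$ over $T$ exhibits $H_j$ as a fiber product of $(j+1)$ copies of $H_0$ over $G$. To match this with the multiple groupoid pullback $H_0^{\times_G (j+1)}$ I would write down explicit Morita equivalence homomorphisms in both directions, exactly as in Proposition \ref{prop:triv-mult-pllback}: a point of the base of $H_0^{\times_G (j+1)}$ is a tuple $(w_0, g_1, \dots, g_j, w_j)$ with $w_k$ in the transversal and $g_k \in G$ linking their images, whereas a point of $H_j$ records the $w_k$ with images literally equal in $X$, and the two are intertwined by the maps sending a tuple to its final (respectively diagonal) component. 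The one point that must be checked, but that comes for free, is that the commuting-square condition defining arrows of the multiple pullback is automatic here, since $G = R^u(X, \phi)|_T$ is an equivalence relation and an arrow is determined by its two endpoints.

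For the second assertion I would use $\Sigma_{j,i} = Y_j \times_X Z_i$ together with the identification $G \ltimes T_i \cong R^u(Z_i, \zeta_i)|_{T_i}$ established above. The projection $\Sigma_{j,i} \to Z_i$ is $s$-bijective (the analogue for $\Sigma_{j,i}$ of the $s$-bijectivity in Lemma \ref{lem:Sigma-L-M-is-fib-prod-of-Sigma-0-M}), so it produces $H_{j,i}$ as an open subgroupoid of $R^u(Z_i)|_{T_i} = G \ltimes T_i$ in the same way that $f_j$ produces $H_j$ inside $G$. Because the $Y_j$-factor of $\Sigma_{j,i}$ contributes precisely the $H_j$-relation while the $Z_i$-factor contributes the $G$-action on $T_i$ through $g_i$, unwinding the definitions identifies this open subgroupoid with the transformation groupoid $H_j \ltimes T_i$; specializing to $j = 0$ gives in particular $H_{0,i} = H_0 \ltimes T_i$.

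The third assertion then follows formally. By the first assertion and the fact that $(-)\ltimes T_i$ preserves Morita equivalence, $H_{j,i} = H_j \ltimes T_i \sim (H_0^{\times_G (j+1)}) \ltimes T_i$, and it remains to commute the transformation-groupoid construction past the multiple pullback, i.e.\ to produce a Morita equivalence $(H_0^{\times_G (j+1)}) \ltimes T_i \sim (H_0 \ltimes T_i)^{\times_{G \ltimes T_i}(j+1)} = H_{0,i}^{\times_{G \ltimes T_i}(j+1)}$. I expect the main obstacle to lie in the first assertion, specifically in the transversal bookkeeping needed to turn the componentwise description of $R^u(Y_j)$ into honest Morita equivalence homomorphisms with $H_0^{\times_G (j+1)}$; the compatibility of $(-)\ltimes T_i$ with multiple pullbacks in the last step is of the same flavour and should be handled by the same explicit-homomorphism technique.
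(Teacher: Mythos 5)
Your proposal founders on the first assertion, at exactly the step you claim ``comes for free.'' The componentwise description of unstable equivalence is correct, and it does identify $R^u(Y_j,\psi_j)$, restricted to a transversal, with the \emph{strict} fiber product of $(j+1)$ copies of $R^u(Y,\psi)|_{T^Y_0}$ over $G$. But $H_0^{\times_G (j+1)}$ is a weak (multiple) pullback: its objects carry arbitrary connecting arrows $g_i \in G$, while its morphisms are tuples of arrows of $H_0$ only. In Proposition \ref{prop:triv-mult-pllback} every object $(x_0,g_1,\dots,x_n)$ can be straightened to a diagonal one because the straightening arrows (composites of the $g_i$) are again tuples of $G$-arrows; here the analogous straightening requires joining each $x_i$ to a common point by arrows of $H_0$, i.e.\ it requires lifting the unstable equivalences $g_i$ in $X$ to unstable equivalences in $Y$. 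Principality of $G$ only makes the commuting squares automatic (the pullback half of the definition of Morita equivalence); it does nothing for the étale-surjection half. Indeed, the claim is false for a general open subgroupoid with full unit space: if $G$ is the pair groupoid on two points and $H_0 = G^{(0)}$, then $H_0^{\times_G 2}$ is a discrete space with four points and the strict fiber product is $H_0$, with two points, so they are not Morita equivalent. (Note also that, by principality, the strict fiber product of copies of the subgroupoid $H_0 \subset G$ over $G$ is literally the diagonal $H_0$, which is visibly not $H_j$.) What rules out this behaviour in the dynamical setting is precisely what the paper names as the main ingredient: the transversality lemma \cite{valmak:groupoidtwo}*{Proposition 2.9} --- given $y_0,\dots,y_j \in Y$ with the $f(y_i)$ mutually unstably equivalent, there exist $y_i' \sim_u y_i$ with $f(y_i') = f(y_0')$ --- from which the first statement is \cite{valmak:groupoidtwo}*{Theorem 2.8}. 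Your proof never invokes, or substitutes for, this lifting property of $s$-bijective maps.

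The same issue sinks the explicit homomorphisms you propose. Projection to the final component sends arrows of $H_0^{\times_G(j+1)}$ into $H_0$, not onto $H_j$, so the pullback-square condition fails as soon as $H_0 \subsetneq H_j$: two diagonal objects joined by an arrow of $H_j \smallsetminus H_0$ admit no arrow upstairs. Conversely, the ``diagonal'' is not even well defined on $H_j$ viewed as an open subgroupoid of $G$, since an arrow of $H_j$ need not lie in $H_0$; it is only defined on the upstairs groupoid $R^u(Y_j,\psi_j)|_{T^Y_j}$, and comparing that groupoid with its (generally non-isomorphic) image $H_j \subset G$ again needs the same lifting property, as $f_j$ is finite-to-one rather than injective on transversals. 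Your outline of the second and third assertions (the transformation-groupoid description of $H_{j,i}$, and commuting $-\ltimes T_i$ past the pullback) matches in shape what the paper dismisses with ``the rest can be proved in similar ways,'' but those steps inherit, rather than avoid, the missing transversality ingredient.
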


\begin{proof}
The main ingredient of the proof is the following transversality~\cite{valmak:groupoidtwo}*{Proposition 2.9}: given $y_0, \dots y_j$ such that $f(y_i)$ are mutually unstably equivalent in $X$, we can find $y'_0, \dots, y'_j$ such that $y'_i$ is unstably equivalent to $y_i$ in $Y$, and additionally $f(y'_i) = f(y'_0)$, for each $i$.
The first statement is proved in~\cite{valmak:groupoidtwo}*{Theorem 2.8}.
The rest can be proved in similar ways.
\end{proof}

As before, let us denote by $\sfZ$ the constant sheaf of integers over $X$, which is $G$-equivariant.

\begin{theorem}\label{thm:compar-homologies}
We have $H_k(G, \sfZ) \cong H^s_k(X, \phi)$ for all $k \in \Z$.
\end{theorem}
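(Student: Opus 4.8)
The plan is to realize both $H_\bullet(G,\sfZ)$ and $H^s_\bullet(X,\phi)$ as iterated homologies of a single bounded triple complex and to compute it in two orders. By Proposition \ref{prop:grpd-homology-from-ubij-map}, the left-hand side is the homology of the double complex $\Gamma_c(G^{(k)}, s^* F^{n,a})$ in total degree $k-n$, where $F=\sfZ$ and the $n$-direction is the cosimplicial c-soft resolution attached to $T_\bullet = g_\bullet^{-1}(T)$. I would resolve the nerve direction further using the $Y$-side: for each $n$, the $s$-bijective maps $\Sigma_{j,n}\to Z_n$ from shifts of finite type furnish the open subgroupoids $H_{j,n}=H_j\ltimes T_n$ of $G\ltimes T_n\cong R^u(Z_n,\zeta_n)|_{T_n}$ described in the preceding Proposition, and these assemble into a simplicial direction indexed by $j$. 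Adjoining the nerve degree $k$ of $H_{j,n}$ then produces a triple complex indexed by $(j,n,k)$, carrying the $S_{n+1}$-alternating normalization in the $n$-direction as in the definition of $F^{n,a}$.

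First I would collapse the nerve direction $k$ for each fixed $(j,n)$. Since $\Sigma_{j,n}$ is a shift of finite type, the groupoid $H_{j,n}$ is Morita equivalent to an AF groupoid, so Proposition \ref{prop:af-grpd-triv-homol} forces $H_p(H_{j,n},\sfZ)=0$ for $p>0$, while $H_0(H_{j,n},\sfZ)=D^s(\Sigma_{j,n})$ by the identification of $H_0$ of an unstable AF groupoid with its stable dimension group. Hence the $k$-direction collapses and the triple complex degenerates to the double complex $(j,n)\mapsto D^s(\Sigma_{j,n})$ with the $S_{n+1}$-alternating normalization in $n$, which is exactly Putnam's normalized double complex, with homology $H^s_\bullet(X,\phi)$. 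Matching the simplicial $j$-direction and cosimplicial $n$-direction (together with the symmetric group actions) to Putnam's face and degeneracy structure on $\Sigma_{L,M}=Y_L\times_X Z_M$ is where Lemma \ref{lem:Sigma-L-M-is-fib-prod-of-Sigma-0-M} and the Morita identifications of the preceding Proposition (via Proposition \ref{prop:triv-mult-pllback}) enter.

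Computing the same triple complex in the other order, I would collapse the $Y$-side direction $j$ for each fixed $n$. For the Smale space $(Z_n,\zeta_n)$ with totally disconnected stable sets and transversal $T_n$ supplied by Proposition \ref{prop:lift-transv}, the mechanism behind Theorem \ref{thm:grpd-hmlg-compar-tot-disconn} says that the simplicial system $H_{j,n}$ resolves $\sfZ$ over $G\ltimes T_n$, so collapsing $j$ replaces the triple complex by $\Gamma_c(G^{(k)}, s^* F^{n,a})$, using the identification \eqref{eq:cpt-sec-on-G-k} together with $G\ltimes T_n\cong R^u(Z_n,\zeta_n)|_{T_n}$. The homology of the latter is $H_\bullet(G,\sfZ)$. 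As both are iterated homologies of one bounded triple complex — boundedness in $n$ coming from $g$ being at most $N$-to-one, in $k$ from the finite cohomological dimension, and in $j$ from the AF collapse — the two spectral sequences converge to the same total homology, giving $H_k(G,\sfZ)\cong H^s_k(X,\phi)$.

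The main obstacle I anticipate is the honest construction of the triple complex itself: assembling the $Z$-side soft resolution $F^{n,a}$ and the $Y$-side subgroupoid resolution into one complex whose partial homologies may be taken in either order. The delicate points are (i) verifying that collapsing $j$ at each fixed $n$ reproduces the soft-resolution double complex at the level of complexes, not merely of homology, so that the abutments and not only the $E^2$-pages agree, and (ii) matching the $S_{n+1}$-equivariant simplicial structure of the $H_{j,n}$ with Putnam's normalization, for which I would rely on the Morita equivalences of the preceding Proposition and the fiber-product description of Lemma \ref{lem:Sigma-L-M-is-fib-prod-of-Sigma-0-M}. Once the triple complex is in place, the conclusion is immediate from the two collapses.
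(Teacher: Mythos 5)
Your overall architecture coincides with the paper's: a bounded multicomplex interpolating between Putnam's normalized double complex and the double complex $\Gamma_c(G^{(k)}, s^* F^{n,a})$ of Proposition \ref{prop:grpd-homology-from-ubij-map}, computed in two orders, with Proposition \ref{prop:af-grpd-triv-homol} plus the dimension-group identification collapsing one direction and a resolution statement collapsing the other. The genuine gap is exactly the point you flag as the ``main obstacle'' and then defer: the triple complex indexed by $(j,n,k)$ does not exist as described, and Morita equivalences cannot be used to build it. If $H_{j,n}$ is taken to be a subgroupoid of $G \ltimes T_n$ (so that its nerve maps to that of $G\ltimes T_n$ and the $n$- and $k$-differentials make sense), then its unit space is $T_n$ for every $j$, and the lifting condition defining it is component-wise in the $Y$-coordinates, so $H_{j,n}$ is essentially independent of $j$: all coordinate-deletion faces $\Sigma_{j,n} \to \Sigma_{j-1,n}$ induce one and the same inclusion, the alternating-sum differentials in $j$ vanish or telescope, and the $k$-collapse cannot produce Putnam's complex, whose terms $D^s(\Sigma_{j,n})_{,A}$ genuinely depend on $j$. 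If instead you take $H_{j,n}$ to be the honest groupoid of $\Sigma_{j,n}$ restricted to a transversal (so that $H_0(H_{j,n},\sfZ) \cong D^s(\Sigma_{j,n})$ and the $k$-collapse works), then the unit spaces grow with $j$, the nerves no longer sit over those of $G \ltimes T_n$, and the face maps act on chains only through Putnam's $s$-bijective functoriality of dimension groups; these are maps defined after passing to homology, not chain maps on $\Gamma_c(H_{j,n}^{(k)},\sfZ)$, and the Morita equivalences of the preceding Proposition identify homologies, not complexes. Relatedly, your $j$-collapse fails at the augmentation step: the natural map $\Gamma_c(H_{0,n}^{(k)},\sfZ) \to \Gamma_c((G\ltimes T_n)^{(k)},\sfZ)$, extension by zero from an open subgroupoid, is injective but not surjective, so the augmented $j$-direction complex cannot be exact in degree $0$ as claimed.

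This is precisely the problem the paper's proof is designed to solve, and solving it costs a fourth index. The paper applies the Crainic--Moerdijk derived functor $\cL (f_j)_!$ for the inclusion $f_j \colon H^{\times_G (j+1)} \to G^{\times_G (j+1)}$ of multiple pullbacks, where the simplicial structure in $j$ genuinely lives, evaluated on the $Z$-side soft resolution $F^{i,a}$. The bar construction $B_n(f_j, F^{i,a})$ produces honest $G$-sheaves with functorial, commuting differentials in all directions, and applying $\Gamma_c(G^{(k)}, s^*(-))$ yields a quadruple complex. Exactness in the bar direction is then a formal contracting homotopy (using that $H$ has full unit space in $G$), which is what actually yields $H_\bullet(G,\sfZ)$; the AF-plus-dimension-group collapse is carried out for fixed $Y$- and $Z$-indices, where one also needs that the quotient $H_{j,i}/S_{i+1}$ is still AF and Putnam's description of the alternating part $D^s(\Sigma_{j,i})_{,A}$ via compact open sets in the free part --- two points your sketch glosses over. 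In short, your two collapses are the right ones, but the object they are to be applied to is missing, and constructing it is the actual content of the paper's proof.
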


\begin{proof}
Consider the inclusion map $f_j\colon H^{\times_G (j+1)} \to G^{\times_G (j+1)}$.
We obtain a bicomplex of $G^{\times_G (j+1)}$-sheaves $F_{j \bullet}^\bullet = \cL (f_j)_! \sfZ$~\cite{cramo:hom}*{Section 4.2}.
This is defined through a choice of c-soft resolution $S^\bullet$ of $\sfZ$ as $H^{\times_G (j+1)}$-sheaves, then by taking the bicomplex $B_\bullet(f_j, S^\bullet)$, where $B_n(f_j, F)$ is defined as $(\beta_n)_! \alpha_n^* F$ for
\[
\begin{tikzcd}
G^{\prime(0)} & K^{(n)} \times_{G^{\prime(0)}} G' \arrow[r, "\alpha_n = r \pi_1"] \arrow[l, "\beta_n = s \pi_2"'] & K^{(0)}
\end{tikzcd}
\]
with $K = H^{\times_G (j+1)}$, $G' = G^{\times_G (j+1)}$, and $\pi_i$ denoting the projections to $K^{(n)}$ and $G'$.

By the Morita equivalence of Proposition~\ref{prop:triv-mult-pllback}, we can regard $F_{j \bullet}^\bullet$ as a complex of $G$-sheaves.
Suppose that $S^i$ is given by a bounded c-soft resolution of $\sfZ$ as a $G^{\times_G (j+1)}$-sheaf.
Then the stalk of $F_{j n}^i$ at $x \in G^{(0)}$ is given by
\begin{equation}\label{eq:F-j-i-x}
(F_{j n}^i)_x = F_{j n}(S^i)_x = \Gamma_c((H^{\times_G (j+1)})^{(n)} \times_{G^{(j)}} (G^{\times_G (j+1)})_x, S^i_x),
\end{equation}
where we identified the base of $G^{\times_G (j+1)}$ with $G^{(j)}$, and $x$ with $(x, \id_x, \dots, x)$ to make sense of $(G^{\times_G (j+1)})_x$.

Recall that we have a bounded c-soft resolution $F^{\bullet,a}$ of the $G$-sheaf $\sfZ$ from the simplicial totally disconnected $G$-space $T_\bullet$.
We will use the corresponding $G^{\times_G (j+1)}$-sheaves $S^\bullet$ in the above scheme. Moreover, we get a homological differential in the $j$-direction from the simplicial structure, hence we obtain a triple complex $F_{\bullet \bullet}^\bullet$ of c-soft $G$-sheaves.
We compute the groupoid hyperhomology $\bH_\bullet(G, F_{\bullet \bullet}^\bullet)$ in two ways to obtain the claim of the theorem.

To relate this to the groupoid homology $H_\bullet(G, \sfZ)$, it is enough to see that the complex $F_\bullet^\bullet$ is quasi-isomorphic to $\sfZ$ (at degree $0$), or equivalently, to the complex $F^{\bullet,a}$.

Given a $G$-sheaf $S$ and any $n$, we claim that the complex of $G$-sheaves $F_{\bullet n}(S)$, defined analogously to $F_{j n}^i$ above but with $S$ instead of $S^i$, form a homological resolution of $S$.
This is a stalk-wise computation, and we just need to check that the complex
\[
\cdots \to F_{1 n}(S)_x \to F_{0 n}(S)_x \to S_x \to 0
\]
is exact.
From \eqref{eq:F-j-i-x}, we see that $F_j(S)_x$ is the space of finitely supported maps from $A_x^{j+1}$ to $S_x$, where
\[
A_x = \{ (h, g) \mid h \in H^{(n)}, g \in G_x, s(h) = r(g) \}.
\]
Then we can construct a contracting homotopy $h\colon F_{j n}(S)_x \to F_{j+1, n}(S)_x$ for the above complex, by fixing $a \in A_x$ and setting
\[
h(c)(a_0, \dots, a_{j+1}) = \delta_{a, a_0} c(a_1, \dots, a_{j+1}).
\]

Next, if we treat $F_{\bullet \bullet}(S)$ as a bicomplex, this is still a resolution of $S$.
Indeed the above already shows that this is quasi-isomorphic to a complex with terms $S$ at each nonnegative degree.
Taking into account the differential in the $n$-direction, we see that it is
\[
\begin{tikzcd}
 \dots \arrow[r,"0"] & S \arrow[r, "\id"] & S \arrow[r, "0"] & S \arrow[r] & 0.
\end{tikzcd}
\]
Using the above argument for $S = F^{i,a}$, we get that $F^\bullet_{\bullet \bullet} = (F^i_{j n})_{i, j, n}$ is a resolution of $F^{\bullet,a}$, hence we get the isomorphism $\bH_\bullet(G, F_{\bullet \bullet}^\bullet) \cong H_\bullet(G, \sfZ)$.

It remains to show that our hyperhomology also computes $H^s_\bullet(X, \phi)$.
By definition $\bH(G, F^\bullet_{\bullet \bullet})$ is the homology of the quadruple complex with terms
\[
E_{j n k}^i = \Gamma_c(G^{(k)}, s^* F_{j n}^i)
\]
with total degree $j + n + k - i$.
The claim follows if we can show that the homology in the $(n,k)$-direction gives $D(\Sigma_{j,i})_{,A}$ concentrated at degree $0$ when we fix $j$ and $i$.

By the Morita equivalence between $H_j$ and $H^{\times_G (j+1)}$, this homology can be interpreted as $\bH_\bullet(G, \cL(f_j)_! F^{i,a})$ if we regard $F^{i,a}$ as an $H_j$-sheaf.
Then by $\bH_\bullet(G, \cL(f_j)_! F^{i,a}) \cong H_\bullet(H_j, F^{i,a})$, our claim reduces to the computation of $H_\bullet(H_j, F^{i,a})$.

Let $C_n$ denote the subgroup of $\Gamma_c(H_{j,i}^{(k)}, \sfZ)$ consisting of the sections $x$ supported on the free orbits and such that $s x = (-1)^{\absv{s}} x$ for $s \in S_{i + 1}$.
Then $H_\bullet(H_j, F^{i,a})$ is the homology of $C_\bullet$ by the c-softness of $F^{i,a}$.
Hence we want to show
\begin{align*}
H_0(C_\bullet) &\cong D^s(\Sigma_{j, i})_{,A},&
H_k(C_\bullet) &= 0 \quad (k > 0).
\end{align*}

Let $H'_{j, i}$ be the quotient groupoid $H_{j, i} / S_{i+1}$.
On the one hand, $H'_{j, i}$ is AF because $H_{j, i}$, which is Morita equivalent to $R^u(\Sigma_{j, i}, \sigma_{j, i})$, is AF (notice that the quotient by the action of $S_{i + 1}$ does not create any stabilizers).
On the other, $H_\bullet(H_j, F^{i,a})$ can be regarded as the groupoid homology of $H'_{j, i}$ with the induced action of $F^{i,a}$. Then we can conclude that the higher homology groups vanish by virtue of Proposition~\ref{prop:af-grpd-triv-homol}.

Finally, the $0$-th homology is the group of $H_j$-coinvariants of $C_0 = \Gamma_c(T, F^{i,a})$.
By the concrete description of $D^s(\Sigma_{j, i})_{,A}$ given by~\cite{put:HoSmale}*{Theorems 3.3.3 and 4.2.8}, we have $H_0(C_\bullet) \cong D^s(\Sigma_{j, i})_{,A}$.
More precisely, any element of the alternating subgroup of $D^s(\Sigma_{j, i})$ (with respect to the action of $S_{i+1}$) is represented by a linear combination of compact open subsets of a stable set with alternating coefficients.
Such compact open sets must belong to the free part $T_i^f \subset T_i$.
\end{proof}

Combining Theorem~\ref{thm:compar-homologies} with the main results in~\cite{valmak:kpd} we obtain the following results. The first is a Künneth formula for the product of two Smale spaces, a result that generalizes~\cite{valmak:groupoidtwo}*{Theorem 5.2} and~\cite{dkw:dyn}*{Theorem 6.5}. 
\begin{theorem}\label{thm:kunses}
Let $(Y_1, \psi_1)$ and $(Y_2, \psi_2)$ be non-wandering Smale spaces.
Then we have a split exact sequence
\begin{multline*}
0 \to {\bigoplus_{a + b = k}} H^s_a(Y_1, \psi_1) \otimes H^s_b(Y_2, \psi_2) \to H^s_k(Y_1 \times Y_2, \psi_1 \times \psi_2)\\
\quad \to {\bigoplus_{a + b = k-1}} \Tor(H^s_a(Y_1, \psi_1), H^s_b(Y_2, \psi_2)) \to 0.
\end{multline*}
\end{theorem}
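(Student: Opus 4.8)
The plan is to transport the statement from Putnam's stable homology to Crainic--Moerdijk groupoid homology via the comparison isomorphism of Theorem \ref{thm:compar-homologies}, and then to feed the product structure into a Künneth short exact sequence for groupoid homology with integer coefficients, which is among the main results of \cite{valmak:kpd}. The genuine work of the present proof is therefore to identify the étale groupoid attached to $(Y_1 \times Y_2, \psi_1 \times \psi_2)$ as the product of the groupoids attached to the factors, and to arrange the auxiliary data (the $s/u$-bijective pairs, the transversals, and the totally disconnected resolving spaces) compatibly with this product.

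First I would record that the product of two non-wandering Smale spaces is again a non-wandering Smale space: the bracket is $[(x_1,x_2),(y_1,y_2)] = ([x_1,y_1],[x_2,y_2])$ for the product metric, the topological dimension is finite (dimensions add, so the standing cohomological-dimension hypothesis persists), and periodic points of the product, being exactly products of periodic points, are dense, whence non-wandering. Moreover two points of the product are unstably equivalent if and only if they are unstably equivalent componentwise, since $d(\phi^{-n}\cdot,\phi^{-n}\cdot)\to 0$ in the product metric precisely when each coordinate converges. This yields a canonical isomorphism of groupoids $R^u(Y_1 \times Y_2, \psi_1 \times \psi_2) \cong R^u(Y_1,\psi_1) \times R^u(Y_2,\psi_2)$.

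Next I would choose the homology data multiplicatively. Given $s/u$-bijective pairs $(Y_i', \psi_i', f_i, Z_i, \zeta_i, g_i)$ over each $(Y_i,\psi_i)$, their products form an $s/u$-bijective pair over the product, with $Z = Z_1 \times Z_2$ (totally disconnected stable sets) and $g = g_1 \times g_2$ again $u$-bijective and at most $(N_1 N_2)$-to-one. Taking transversals $T_i \subset Y_i$ as in Proposition \ref{prop:lift-transv} (local stable sets about periodic points), I would verify that $T = T_1 \times T_2$ is again such a transversal: it is a local stable set about a periodic point of the product, and its preimages $g_n^{-1}(T) = T_{1,n} \times T_{2,n}$ are transversals for the unstable relation on $Z_n = Z_{1,n} \times Z_{2,n}$. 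Consequently $G := R^u(Y_1 \times Y_2)|_T \cong G_1 \times G_2$ with $G_i = R^u(Y_i)|_{T_i}$, the nerves satisfy $G^{(k)} = G_1^{(k)} \times G_2^{(k)}$, and the totally disconnected resolving $G$-space is $T_0 = T_{0,1} \times T_{0,2}$. With these identifications the Künneth formula for groupoid homology of \cite{valmak:kpd} applies to $G = G_1 \times G_2$ with the constant sheaf $\sfZ$, producing a split short exact sequence with outer terms $\bigoplus_{a+b=k} H_a(G_1, \sfZ) \otimes H_b(G_2, \sfZ)$ and $\bigoplus_{a+b=k-1} \Tor(H_a(G_1,\sfZ), H_b(G_2,\sfZ))$; substituting the comparison isomorphisms $H^s_\bullet(Y_i,\psi_i) \cong H_\bullet(G_i,\sfZ)$ and $H^s_\bullet(Y_1 \times Y_2) \cong H_\bullet(G,\sfZ)$ of Theorem \ref{thm:compar-homologies} then gives the stated sequence.

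I expect the main obstacle to sit not in this final substitution but inside the groupoid Künneth theorem, i.e.\ in showing that the chain complex $\Gamma_c(G^{(k)}, s^* F^{n,a})$ computing $H_\bullet(G,\sfZ)$ decomposes, up to quasi-isomorphism, as the tensor product of the corresponding complexes for $G_1$ and $G_2$. The factorizations $G^{(k)} = G_1^{(k)} \times G_2^{(k)}$ and $T_n = T_{1,n} \times T_{2,n}$ make this plausible, but the alternating soft resolution $F^{\bullet,a}$ carries a diagonal $S_{n+1}$-action together with the sign-isotypic condition, so the comparison is really an Eilenberg--Zilber/shuffle argument in the presence of symmetric-group equivariance rather than a naive product of complexes. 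Since the relevant chain groups are torsion-free, hence flat over $\Z$, once this tensor-product decomposition is secured the algebraic Künneth theorem over the PID $\Z$ supplies precisely the $\Tor$ term and the (noncanonical) splitting; this is exactly the content I would cite from \cite{valmak:kpd} rather than reprove here.
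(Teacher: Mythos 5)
Your proposal takes essentially the same route as the paper: the paper's proof is exactly the combination of Theorem \ref{thm:compar-homologies} with the K\"unneth theorem for groupoid homology, \cite{valmak:kpd}*{Theorem A}, which is cited rather than reproved. The extra details you supply --- that the product of non-wandering Smale spaces is non-wandering, that $R^u(Y_1 \times Y_2, \psi_1 \times \psi_2) \cong R^u(Y_1,\psi_1) \times R^u(Y_2,\psi_2)$, and that transversals can be chosen multiplicatively so that $G \cong G_1 \times G_2$ --- are correct and constitute precisely the glue the paper leaves implicit.
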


\begin{proof}
This is obtained by combining Theorem~\ref{thm:compar-homologies} with a general Künneth type theorem for groupoid homology~\cite{valmak:kpd}*{Theorem A}.
\end{proof}

As usual, the morphisms in the short exact sequence of Theorem~\ref{thm:kunses} above are natural in
any reasonable sense, however the splitting is not.

The second result can be interpreted as a Poincaré duality-type theorem in the setting of Smale spaces. It reduces the computation of the homology groups to the standard (compactly supported) cohomology groups for the topological space underlying the dynamical system.

Suppose that $(X, \phi)$ is a Smale space such that the unstable sets $X^u(x)$ are homeomorphic to $\R^n$, for some fixed $n$.
Then we can consider a local system $\underline{o}$ for the group $\Z/2\Z$ corresponding to the leafwise orientation on unstable sets.
Formally speaking, its stalk at $x \in X$ is given by
\[
\underline{o}_x = (\medwedge^n T_x X^u(x, \epsilon) \setminus \{0\}) / \R_{ > 0}
\]
for a small enough $\epsilon > 0$.

\begin{theorem}\label{thm:poindualsmale}
Let $(X, \phi)$ be as above.
Then we have an isomorphism
\[
H^s_k(X, \phi) \cong H^{n-k}(X, \sfZ \times_{\Z/2\Z} \underline{o}).
\]
\end{theorem}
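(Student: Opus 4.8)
The plan is to combine the comparison theorem with a Poincaré duality for groupoid homology along the unstable leaves. By Theorem~\ref{thm:compar-homologies} we have $H^s_k(X,\phi)\cong H_k(G,\sfZ)$ for $G=R^u(X,\phi)|_T$, so it suffices to produce a natural isomorphism
\[
H_k(G,\sfZ)\cong H^{n-k}(X,\sfZ\times_{\Z/2\Z}\sfo).
\]
Geometrically, the compact space $X$ is foliated by its unstable sets $X^u(x)\cong\R^n$, the relation $R^u(X,\phi)$ is the associated (non-étale) leaf groupoid, and $G$ is its étale reduction to the transversal $T$; by the Morita equivalence $G\sim R^u(X,\phi)$ recalled in Section~\ref{sec:prelim}, the left-hand side is the Crainic--Moerdijk homology of this foliated structure. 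Since $X$ is compact, the right-hand side equally computes $H^{n-k}_c$, which is the form in which the duality naturally appears.

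First I would set up the leafwise duality. Each leaf is homeomorphic to $\R^n$, hence is an oriented manifold once the local system $\sfo$ is taken into account, and its leafwise compactly supported cohomology is $\Z$ concentrated in top degree $n$, carrying precisely the orientation twist $\sfo$. The key is therefore a quasi-isomorphism, along the source fibres of $R^u(X,\phi)$, between the c-soft complex computing leafwise cohomology and the orientation sheaf placed in degree $n$; equivalently, cap product with the leafwise fundamental class. Feeding this into the double complex of compactly supported sections over the nerve (resolved by c-soft $G$-sheaves) as in Section~\ref{sec:prelim} trades homological degree $k$ for cohomological degree $n-k$ and replaces the constant coefficient $\sfZ$ by $\sfZ\times_{\Z/2\Z}\sfo$. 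This produces a natural transformation
\[
H^{n-k}(X,\sfZ\times_{\Z/2\Z}\sfo)\to H_k(G,\sfZ),
\]
and the remaining task is to prove it is an isomorphism.

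To do so I would argue by a local-to-global comparison. Stalkwise the statement reduces to Poincaré duality for $\R^n$, which holds by the concentration of $H^\bullet_c(\R^n,\sfo)$ in degree $n$; one then assembles these local isomorphisms across the transverse direction using a spectral sequence separating the leaf and transverse variables (a foliation spectral sequence), which degenerates because the duality map is an isomorphism on each $E_1$-term. The hypotheses to verify are mild: the standing cohomological dimension bound gives $N\ge n$, and the properness of $g_n$ on stable sets guarantees the base-change statements used to compute the relevant direct images. The main obstacle is the careful bookkeeping of the orientation system $\sfo$: the individual leaves are orientable, but not coherently so across $T$, so $\sfo$ has nontrivial transverse monodromy, and one must check that the fundamental-class map is $G$-equivariant with exactly this twist, so that the coefficient emerging on all of $X$ is $\sfZ\times_{\Z/2\Z}\sfo$ rather than an untwisted or transversal-only variant. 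A secondary point is ensuring that ordinary cohomology of $X$---rather than a compactly supported longitudinal version---results: this is where compactness of $X$ together with the non-compactness of the leaves $\R^n$ enter, the top leafwise cohomology being the compactly supported one while the transverse assembly yields $H^\bullet(X,-)$.
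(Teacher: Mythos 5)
Your overall decomposition is exactly the paper's: the proof given there consists of two citations, combining Theorem \ref{thm:compar-homologies} with \cite{valmak:kpd}*{Theorem B}, and your first step --- invoking the comparison theorem so that it suffices to prove $H_k(G,\sfZ)\cong H^{n-k}(X,\sfZ\times_{\Z/2\Z}\sfo)$ --- isolates precisely the statement that the paper imports from the companion work. Where you differ is that, instead of citing this Poincar\'e duality for groupoid homology, you sketch a proof of it: leafwise compactly supported cohomology of the $\R^n$ unstable leaves concentrated in degree $n$ with the orientation twist, cap product with a leafwise fundamental class, and assembly over the transversal via a foliation-type spectral sequence. This is the right shape of argument, and your two flagged subtleties --- equivariance of the fundamental class with exactly the twist $\sfo$, and why ordinary cohomology of the compact space $X$ rather than a longitudinally compactly supported variant appears --- are genuinely the delicate points. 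However, as written this part is a plan rather than a proof: the duality morphism is never constructed as a map of (complexes of) $G$-sheaves, its $G$-equivariance is not verified, and the degeneration and convergence of your assembly spectral sequence are asserted from stalkwise duality without argument. Two smaller inaccuracies: the claim that the standing cohomological-dimension bound ``gives $N\ge n$'' conflates the bound on the transversal $T=G^{(0)}$ with the leaf dimension $n$, and the properness of the maps $g_n$ from the $u$-bijective resolution plays no role in the duality itself. In short, you reproduce the paper's reduction faithfully, but you replace its citation of \cite{valmak:kpd}*{Theorem B} with an in-line sketch whose completion would amount to reproving that theorem.
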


\begin{proof}
This is obtained by combining Theorem~\ref{thm:compar-homologies} with~\cite{valmak:kpd}*{Theorem B}.
\end{proof}

\begin{remark}
In~\cite{put:HoSmale}*{Question 8.3.2} Putnam asked if his homology groups of $(X, \phi)$ are isomorphic, up to a degree shift, to the usual cohomology of the underlying space $X$ whenever the (un)stable sets are contractible.
The above result gives a precise form of this conjectural isomorphism.
\end{remark}

Let $(X, \phi)$ be a compact Riemannian manifold with an Anosov diffeomorphism.
If the non-wandering set of this system is $X$ itself, we obtain a Smale space which satisfies the assumption of the above theorem, see for example~\cite{bs:dynbook}*{Section 5.6}.

\begin{example}\label{exa:rotalg}
As a concrete example of the above, let us consider the $m$-dimensional torus $\bT^m$ together with the automorphism $\phi_A$ induced by a hyperbolic matrix $A \in \mathrm{SL}_m(\Z)$.
Our convention here is that $A$ is diagonalizable as a real matrix, with eigenvalues $(\lambda_i)_{i = 1}^m$ such that $\absv{\lambda_i} \neq 1$ for all $i$.
Let us assume that they are ordered so that we have $\absv{\lambda_i} < 1$ for $i = 1, \ldots, n$, while $\absv{\lambda_i} > 1$ for $i = n + 1, \ldots, m$, and let us denote an eigenvector for $\lambda_i$ by $v_i$.

Identifying the tangent spaces of $\bT^m$ with $\R^m$, the stable direction of $\phi_A$ is spanned by the vectors $(v_i)_{i = 1}^n$ , while the unstable direction is spanned by the $(v_i)_{i = n + 1}^m$.
As a transversal $T \subset \bT^m$ for $R^u(\bT^m, \phi_A)$, we take the span (based at the origin $0 \in \bT^m$) of a subset $\{ e_1, \dots, e_n \}$ of the standard basis of $\R^m$ such that $\{ e_1, \dots, e_n, v_{n + 1}, \dots, v_m \}$ is again a basis.
Then the action of the étale groupoid $G = R^u(\bT^m, \phi_A)|_T$ on $T$ is given by translation, hence preserves the standard orientation on $T$.

By Theorem~\ref{thm:poindualsmale}, we see that $H^s_k(\bT^m, \phi_A)$ is isomorphic to the ordinary cohomology groups of $\bT^m$ up to degree shift by $n$, that is, we have
\begin{equation}\label{eq:torus-ex}
H^s_{k}(\bT^m, \phi_A) \cong \Z^r, \quad r = \binom{m}{n-k}.
\end{equation}
\end{example}

\section{Smale space with totally disconnected unstable sets}
\label{sec:smale-sp-tot-disc-unst-sets}

Now, let $(X, \phi)$ be a Smale space with totally disconnected unstable sets, and let us fix a $u$-bijective map $f \colon (\Sigma, \sigma) \to (X, \phi)$ from a shift of finite type.
Let us take the constructions from Section~\ref{sec:sof-res-from-u-bij-map}, with $(Z, \zeta) = (\Sigma, \sigma)$.
In particular, we have an étale groupoid $G = R^u(X, \phi)|_T$ for a transversal $T$ as in Proposition~\ref{prop:lift-transv}, and the sheaves $(F^{n,a})_n$ giving a resolution of $\sfZ_T$.

Recall also that the Smale spaces $(\Sigma_n, \sigma_n)$ admit presentation by certain graphs~\cite{put:HoSmale}.
One starts from a graph $\cG$ presenting $(\Sigma, \sigma)$ with vertex set $\cG^0$ and edge set $\cG^1$.
Then one defines graphs $\cG_n$ representing $(\Sigma_n, \sigma_n)$, whose vertex sets are subsets of $(\cG^0)^{n+1}$, and the edge sets are subsets of $(\cG^1)^{n+1}$.
There are natural actions of $S_{n+1}$ on $(\cG^i)^{n+1}$ inducing the one on $\Sigma_n$.

We keep using the notation introduced in Sections~\ref{sec:adapt-segal} and~\ref{sec:sof-res-from-u-bij-map}.

\begin{proposition}
We have
\[
K_{i}(G \ltimes C_0(\nvGR{T_\bullet}^{(p)} \smallsetminus \nvGR{T_\bullet}^{(p-1)})) \cong H_0(G, F^{p,a})
\]
when $i \equiv p \bmod 2$ and $K_{i}(G \ltimes C_0(\nvGR{T_\bullet}^{(p)} \smallsetminus \nvGR{T_\bullet}^{(p-1)})) = 0$ otherwise.
\end{proposition}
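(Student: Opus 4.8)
The plan is to compute the stratum's $K$-theory by exhibiting $G \ltimes C_0(\nvGR{T_\bullet}^{(p)} \smallsetminus \nvGR{T_\bullet}^{(p-1)})$ as a Thom construction over an AF groupoid and reading off the answer from groupoid homology. As in the proof of Proposition \ref{prop:rat-segal-e1-sheet}, the stratum is the quotient $(\mathring\Delta^p \times T_p^f)/S_{p+1}$ of the product of the open simplex with the free part $T_p^f$, by the free diagonal $S_{p+1}$-action. I would set $B_p = T_p^f/S_{p+1}$, which is again totally disconnected, and view the stratum as the total space of a bundle $W_p = T_p^f \times_{S_{p+1}} \mathring\Delta^p \to B_p$ with fiber $\mathring\Delta^p \cong \R^p$. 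Using the barycenter as zero section makes $W_p \to B_p$ a genuine rank-$p$ real vector bundle whose fiberwise $S_{p+1}$-action is the standard (reduced permutation) representation, of determinant the sign character; hence its orientation local system, pulled back along the covering $T_p^f \to B_p$, is the sign character of $S_{p+1}$. The bundle is $G$-equivariant for the residual action, and since $G \ltimes T_p \cong R^u(\Sigma_p,\sigma_p)|_{T_p}$ is AF, its restriction to the open invariant set $T_p^f$ is AF, so the quotient $\tilde G_p := (G \ltimes T_p^f)/S_{p+1} = G \ltimes B_p$ by the free $S_{p+1}$-action is again an AF groupoid (exactly as for $H'_{j,i}$ in the proof of Theorem \ref{thm:compar-homologies}).

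The heart of the argument is a $G$-equivariant Thom isomorphism along the fibers of $W_p \to B_p$. Because $B_p$ is totally disconnected, the obstructions to a $\mathrm{Spin}^c$ structure over $B_p$ lie in cohomology in positive degree and vanish, so such a structure exists; the only surviving datum is the $G$-action on fiber orientations, which by the previous paragraph is the sign/orientation $\tilde G_p$-sheaf of integers, denote it $\ell_p$. I would thus obtain
\[
K_i\bigl(G \ltimes C_0(\nvGR{T_\bullet}^{(p)} \smallsetminus \nvGR{T_\bullet}^{(p-1)})\bigr) \cong K_{i-p}\bigl(\tilde G_p \ltimes C_0(B_p,\ell_p)\bigr).
\]
Since $\tilde G_p$ is AF, the spectral sequence of Theorem \ref{thm:spec-seq-gen} (with $H = G^{(0)}$, and Remark \ref{rem:spec-seq-gen-special-cases}) collapses: Proposition \ref{prop:af-grpd-triv-homol} kills all higher homology with any coefficient sheaf, and $K_1$ of an AF algebra is trivial. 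Hence the right-hand side is concentrated in the parity $i \equiv p \pmod 2$, where it equals $H_0(\tilde G_p, \ell_p)$, and vanishes otherwise.

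Finally I would match coefficients. The group $H_0(\tilde G_p, \ell_p)$ is the module of $\tilde G_p$-coinvariants of $\Gamma_c(B_p, \ell_p)$; since $\tilde G_p = G \ltimes B_p$, these coincide with the $G$-coinvariants. Pulling back along $T_p^f \to B_p$ identifies $\Gamma_c(B_p, \ell_p)$ with the sign-equivariant compactly supported sections $\{x \in \Gamma_c(T_p^f, \sfZ) \mid s x = (-1)^{\absv{s}} x\}$, which is precisely $\Gamma_c(T, F^{p,a})$ by the stalkwise description of $F^{p,a}$. Therefore $H_0(\tilde G_p, \ell_p) \cong \Gamma_c(T, F^{p,a})_G = H_0(G, F^{p,a})$, as desired. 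The main obstacle is the Thom isomorphism step: one must set up the fiberwise $\mathrm{Spin}^c$/orientation data $G$-equivariantly over the totally disconnected base and check that the induced twist on the Bott class is exactly the sign local system $\ell_p$, so that the argument produces the integral statement rather than merely the rational one of Proposition \ref{prop:rat-segal-e1-sheet}, with no torsion or extension ambiguity. A useful sanity check is the case where $G$ is trivial and $T_p^f$ is a single free $S_{p+1}$-orbit: there $W_p \cong \mathring\Delta^p$, and both sides equal $\Z$ in parity $p$, confirming the bookkeeping of the sign twist.
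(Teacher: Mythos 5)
Your overall skeleton (identify the stratum with $(\mathring\Delta^p \times T_p^f)/S_{p+1}$, use AF-ness of the quotient groupoid $\tilde G_p$ to collapse the spectral sequence of Theorem \ref{thm:spec-seq-gen} via Proposition \ref{prop:af-grpd-triv-homol}, then match the degree-zero coefficients with $\Gamma_c(T,F^{p,a})$) agrees with the paper's, and your final coefficient matching is correct. The gap is exactly in the step you flag as the main obstacle: the $G$-equivariant Thom isomorphism. Your justification for the existence of the required $\mathrm{Spin}^c$-data is that the obstructions ``lie in cohomology in positive degree'' of the totally disconnected base $B_p$ and hence vanish. That disposes only of the \emph{non-equivariant} obstruction, which is vacuous here: over a zero-dimensional base $W_p$ is even trivializable and $\ell_p$ is a trivial local system, so non-equivariantly there is nothing to prove. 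The genuine obstruction is equivariant. The bundle $W_p \to B_p$ together with its $\tilde G_p$-action has structure group the finite group $S_{p+1}$ acting through the reduced permutation representation $\rho \colon S_{p+1} \to O(p)$, and an equivariant Thom class requires lifting $\rho$ (or its pullback along the cocycle $\tilde G_p \to S_{p+1}$ classifying the equivariant principal bundle $T_p^f \to B_p$) to $\mathrm{Pin}^c(p)$. For $p \ge 3$ no lift of $\rho$ exists: the pullback of the extension $U(1) \to \mathrm{Pin}^c(p) \to O(p)$ along $\rho$ is one of Schur's spin double covers of $S_{p+1}$, whose class in $H^2(S_{p+1}, U(1)) \cong \Z/2$ is nonzero. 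So there is no fiberwise $S_{p+1}$-equivariant $\mathrm{Spin}^c$-structure, and the claim that ``the only surviving datum is the sign/orientation sheaf $\ell_p$'' is unjustified: besides $\ell_p$ there is a $U(1)$-valued $2$-cocycle twist on $\tilde G_p$. Your sanity check (a single free orbit, $G$ trivial) cannot detect this, precisely because the obstruction is invisible non-equivariantly.

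To salvage your route you would need to prove that this $2$-cocycle twist becomes trivial over the AF groupoid $\tilde G_p$, and to develop the equivariant Thom isomorphism in the twisted/graded setting — which is also needed to make sense of the undefined object $\tilde G_p \ltimes C_0(B_p,\ell_p)$, since $\ell_p$ is a $\Z$-local system rather than a C$^*$-coefficient (one must pass through Clifford algebra bundles or graded $K$-theory). That is substantially heavier machinery than the statement requires. The paper avoids the Thom isomorphism altogether: after the same AF collapse (performed over $A = T_p/S_{p+1}$), it computes the coefficient module $K_q(C_0(B))$ by exhibiting $C_0(T^{\prime f})$ as an increasing union of $S_{p+1}$-invariant finite-dimensional subalgebras $C(K_n)$ with $K_n$ a \emph{free finite} $S_{p+1}$-set; over each finite stage the simplex bundle $(\mathring\Delta^p \times K_n)/S_{p+1}$ is honestly a disjoint union of copies of $\mathring\Delta^p$, so its $K$-theory is computed by hand and the sign twist is recorded by the maps in the inductive limit. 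In other words, the paper trivializes the problematic equivariant structure stage by stage instead of invoking an equivariant index-theoretic isomorphism, and this is what makes the integral statement come out with no twisting ambiguity.
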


\begin{proof}
Put $A = T_p / S_{p + 1}$ and $B = \nvGR{T_\bullet}^{(p)} \smallsetminus \nvGR{T_\bullet}^{(p-1)} \simeq (\mathring{\Delta}^p \times T_p^f) / S_{p + 1}$.
As $A$ is a totally disconnected $G$-space, $G \ltimes A$ is an ample groupoid.
Moreover, the map $B \to A$ induced by the projection of $\mathring{\Delta}^p \times T_p^f$ to the second factor is $G$-equivariant, hence $C_0(B)$ has a structure of $G \ltimes A$-C$^*$-algebra.
By Theorem~\ref{thm:spec-seq-gen} and Remark~\ref{rem:spec-seq-gen-special-cases}, we have a spectral sequence
\[
E^2_{p q} = H_p(G \ltimes A, K_q(C_0(B))) \Rightarrow K_{p + q}(G \ltimes C_0(B)).
\]

Moreover, as in the proof of Theorem~\ref{thm:compar-homologies}, $G \ltimes A$ is an AF groupoid.
By Proposition~\ref{prop:af-grpd-triv-homol} we have $E^2_{p q} = 0$ for $p > 0$ for the above spectral sequence, hence it collapses at the $E^2$-sheet.
We are left with establishing the identification of $H_0(G \ltimes A, K_q(C_0(B)))$ with either $H_0(G, F^{p,a})$ or $0$ depending on the parity of $q$. We will achieve this by comparing the corresponding $G$-modules up to replacing the transversal, namely $K_q(C_0(B))$ and $\Gamma_c(T'', F^{p, a})$ for some $T'' \subset X$, as the $0$-th homology groups are groups of their coinvariants.

By Morita invariance of groupoid homology, we may replace the transversal $T_p \subset \Sigma_p$ by another $S_{p+1}$-invariant transversal.
For each $z \in \cG_p^0 \subset (\cG^0)^{p+1}$, let $A_z$ be the space of one-sided infinite paths representing a local stable set in $\Sigma_p$.
As a new transversal, take  one that can be identified with the union of the $A_z$, and call it $T'$.
Then its free part $T^{\prime f}$ is the subset of paths that pass through a vertex $w \in \cG_p^0$ which is in a free orbit of the $S_{p+1}$ action.

Now, let $B'$ be the quotient $(\mathring{\Delta}^p \times T^{\prime f}) / S_{p + 1}$, which corresponds to $B$ under the Morita equivalence induced by the change of transversal.
Consider the homomorphism
\[
j\colon K_q(C_0(B')) \to K_{q-p}(C_0(T^{\prime f}))
\]
induced by the projection $\mathring{\Delta}^p  \times T^{\prime f} \to B'$ and the Bott periodicity isomorphism
\[
K_q(C_0(\mathring{\Delta}^p  \times T^{\prime f})) \simeq K_{q - p}(C_0(T^{\prime f})).
\]
The right hand side can be identified with $\Gamma_c(T^{\prime f}, \sfZ)$, or $0$, depending on $(q-p) \bmod~2$.
It is enough to check that $j$ is injective, and identify its image with the alternating subspace for the action of $S_{p + 1}$. To do this, we look at the structure of the $S_{p+1}$-C$^*$-algebra $C_0(T^{\prime f})$.

By the above presentation of $T^{\prime f}$, $C_0(T^{\prime f})$ is the inductive limit of an increasing sequence of $S_{p+1}$-invariant finite-dimensional subalgebras of the form $C(K_n)$, where $K_n$ is a free $S_{p+1}$-set.
Concretely, $K_n$ is the set of paths of length $n$ in $\cG_p$ passing through a free orbit.

Let $B_n$ be the quotient $(\mathring{\Delta}^p \times K_n) / S_{p + 1}$.
Then $C_0(B)$ is again the union of $C(B_n)$, which implies $K_q(C_0(B)) = \varinjlim K_q(C(B_n))$.
It remains to show that $j$ is injective on $K_q(C_0(B_n))$ and its image is the alternating part of $\Gamma_c(K_n, \sfZ) = C(K_n, \Z)$.
Since $K_n$ is a finite free $S_{p + 1}$-set, we can replace $K_n$ by $S_{p + 1}$.

Now, $(\mathring{\Delta}^p \times S_{p + 1}) / S_{p + 1} \simeq \mathring{\Delta}^p$ implies
\begin{equation}\label{eq:K-grp-Dp-S-p-plus-1-mod-S-p-plus-1}
K_p(C((\mathring{\Delta}^p \times S_{p + 1}) / S_{p + 1})) \simeq \Z.
\end{equation}
Looking at the quotient map $\mathring{\Delta}^p \times S_{p + 1} \to (\mathring{\Delta}^p \times S_{p + 1}) / S_{p + 1}$, we see that the induced homomorphism
\[
K_p(C((\mathring{\Delta}^p \times S_{p + 1}) / S_{p + 1})) \to K_p(C(\mathring{\Delta}^p \times S_{p + 1})) \simeq C(S_{p + 1}, \Z)
\]
sends a generator of~\eqref{eq:K-grp-Dp-S-p-plus-1-mod-S-p-plus-1} to the element $\sum_{s \in S_{p + 1}} (-1)^{\absv{s}} s$.
As this element is a generator of the alternating part of $C(S_{p + 1}, \Z)$, we obtain the claim.
\end{proof}

We now have a dual analogue of the homological spectral sequence in Theorem~\ref{thm:spec-seq-Smale-tot-discon-st-set}.

\begin{corollary}\label{cor:spec-seq-Smale-tot-disconn-stable-sets}
Let $(X, \phi)$ be as above.
There is a cohomological spectral sequence abutting to $K_\bullet(C^* R^u(X, \phi))$, with $E_2$-sheet given by $H^s_\bullet(X, \phi)$.
More precisely:
\[
E_2^{pq}= H^s_p(X,\phi)\otimes K_q(\bC)\Rightarrow K_{p+q}(C^* R^u(X, \phi)).
\]
\end{corollary}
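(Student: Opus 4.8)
The plan is to apply the equivariant $K$-theory spectral sequence \eqref{eq:spec-seq-for-K-of-G-crossed} of Section \ref{sec:equiv-k-th-spec-seq} to the groupoid $G = R^u(X,\phi)|_T$ together with the totally disconnected $G$-space $T_0$ and its proper, at most $N$-to-one structure map to $T$ arising from the $u$-bijective map $f$. First I would check that the hypotheses there are met: $G$ is a principal étale groupoid, so its stabilizers are trivial and hence torsion-free, and it is amenable, whence it satisfies the strong Baum--Connes conjecture and $C^*_r G = C^* G$. This yields the convergent cohomological spectral sequence
\[
E_1^{pq} = K_{p+q}(G \ltimes C_0(\nvGR{T_\bullet}^{(p)} \smallsetminus \nvGR{T_\bullet}^{(p-1)})) \Rightarrow K_{p+q}(C^*_r G),
\]
and since $G$ is Morita equivalent to $R^u(X,\phi)$ through the transversal $T$, the abutment is $K_{p+q}(C^* R^u(X,\phi))$.

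Next I would feed in the preceding Proposition, which computes the $E_1$-terms as $E_1^{pq} \cong H_0(G, F^{p,a})$ when $q$ is even and $0$ when $q$ is odd; equivalently $E_1^{pq} \cong H_0(G, F^{p,a}) \otimes K_q(\bC)$. Since both the $d_1$-differential of the Segal-type spectral sequence and the differential on the resolution $(F^{n,a})_n$ are induced by the simplicial structure of $T_\bullet$, I would argue that these isomorphisms are compatible, so that under them $d_1$ is carried to the cosimplicial differential of $(F^{\bullet,a})$. Consequently the $E_2$-sheet is the cohomology of the complex $\bigl(H_0(G, F^{\bullet,a}),\, d\bigr)$, tensored with $K_q(\bC)$.

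The decisive step is to identify this cohomology with $H_p(G, \sfZ)$. For this I would revisit the double complex $\Gamma_c(G^{(k)}, s^* F^{n,a})$ of Proposition \ref{prop:grpd-homology-from-ubij-map}, which computes $H_\bullet(G, \sfZ)$ because $F^{\bullet,a}$ is a c-soft resolution of $\sfZ$. Running the spectral sequence of this double complex that takes groupoid homology in the $k$-direction first, the key point is that $H_k(G, F^{n,a}) = 0$ for $k > 0$: exactly as in the proof of Theorem \ref{thm:compar-homologies} and of the Proposition above, $F^{n,a}$ is the alternating part attached to the totally disconnected $G$-space $T_n$, and passing to the quotient groupoid $(G \ltimes T_n)/S_{n+1}$---which is AF since $R^u(\Sigma_n, \sigma_n)$ is AF---Proposition \ref{prop:af-grpd-triv-homol} forces the higher homology to vanish. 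Hence the double-complex spectral sequence degenerates onto the line $k = 0$, and $H_p(G,\sfZ)$ is precisely the cohomology of $\bigl(H_0(G, F^{\bullet,a}),\, d\bigr)$. Finally Theorem \ref{thm:compar-homologies} gives $H_p(G,\sfZ) \cong H^s_p(X,\phi)$, yielding $E_2^{pq} \cong H^s_p(X,\phi) \otimes K_q(\bC)$ as claimed. I expect the main obstacle to be the careful matching of the $d_1$-differential with the cosimplicial differential, and the bookkeeping ensuring that the vanishing of higher groupoid homology makes the two complexes agree on the nose rather than merely up to a further spectral-sequence comparison.
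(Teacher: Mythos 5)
Your proposal is correct and follows essentially the same route as the paper's proof: apply the spectral sequence \eqref{eq:spec-seq-for-K-of-G-crossed}, identify the $E_1$-terms via the preceding proposition, show the $E_2$-sheet is the cohomology of $H_0(G, F^{\bullet,a})$ by using the vanishing $H_k(G, F^{p,a}) = 0$ for $k>0$ (through the AF quotient groupoid and Proposition \ref{prop:af-grpd-triv-homol}), and conclude with Theorem \ref{thm:compar-homologies}. The extra care you flag about matching the $d_1$-differential with the cosimplicial differential is a point the paper passes over silently, but it is not a divergence in method.
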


\begin{proof}
The $E_2$-sheet of the spectral sequence \eqref{eq:spec-seq-for-K-of-G-crossed} is given by the cohomology of $H_0(G, F^{p,a})$ for even $q$, and is trivial for odd $q$.

We first claim that, up to inverting the degree, this cohomology is equal to $H_\bullet(G, \sfZ)$.
Indeed, as $(F^{p,a})_p$ is a resolution of $\sfZ$, the groupoid homology is computed from the double complex with terms $\Gamma_c(G^{(q)}, F^{p,a})$.
When we fix $p$, the resulting homological complex has homology groups
\[
H_q(G, F^{p,a}) \cong H_q(G \ltimes A, F')
\]
for $A = T_p/S_{p+1}$ as before, and the $G \ltimes A$-sheaf $F'$ corresponding to $F^{p,a}$.
Again by Proposition~\ref{prop:af-grpd-triv-homol}, this homology is trivial for $q > 0$.
Then the homology of the total complex is the same as the cohomology of the cochain complex $H_0(G, F^{\bullet,a})$.

Thus, we have
\begin{align*}
E_2^{p q} &\cong H_{-p}(G, \sfZ) \quad (\text{$q$ even}), &
E_2^{p q} &\cong 0 \quad (\text{$q$ odd}),
\end{align*}
which implies the claim.
\end{proof}

This corollary, combined with the dual result from~\cite{valmak:groupoidtwo} (summarized in Theorem~\ref{thm:spec-seq-Smale-tot-discon-st-set} above), provides a partial solution to a conjecture in~\cite{put:HoSmale}*{Question 8.4.1}, asking about the possibility of computing $K$-groups of C$^*$-algebras attached to Smale spaces from the corresponding homology groups. We can consider the conjecture settled in the case where either the stable or unstable foliations are totally disconnected.

\section{Finiteness for general Smale spaces}
\label{sec:finite-gen-smale-sp}

Although we cannot directly relate the $K$-groups of $C^* R^u(X, \phi)$ to the groupoid homology of $R^u(X, \phi)|_T$, the considerations from Section~\ref{sec:symm-simp-sp} still lead to the following finiteness result.

\begin{theorem}\label{thm:ftrk}
Let $(X, \phi)$ be a non-wandering Smale space.
Then $K_\bullet(C^* R^u(X, \phi))$ is of finite rank.
\end{theorem}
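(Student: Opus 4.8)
The plan is to feed the $u$-bijective resolution into the Segal-type spectral sequence of Section~\ref{sec:equiv-k-th-spec-seq} and to control each of its finitely many columns by means of the totally disconnected stable case already at our disposal. First I would fix an $s/u$-bijective pair $(Y,\psi,f,Z,\zeta,g)$ over $(X,\phi)$ and a transversal $T$ as in Proposition~\ref{prop:lift-transv}, and set $G = R^u(X,\phi)|_T$. Since $g$ is $u$-bijective and $Z$ has totally disconnected stable sets, $T_0 = g^{-1}(T)$ is a totally disconnected $G$-space which is proper and at most $N$-to-one over $T$, so the hypotheses of Section~\ref{sec:equiv-k-th-spec-seq} are met: $R^u(X,\phi)$ is principal, hence has torsion-free (indeed trivial) stabilizers, and it is amenable, hence satisfies the strong Baum--Connes conjecture. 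By Morita invariance $K_\bullet(C^*R^u(X,\phi)) \cong K_\bullet(C^*_r G)$, and Proposition~\ref{prop:k-grp-isom-for-naive-geom-real-cross-prod} together with the skeletal filtration of $\nvGR{T_\bullet}$ yields the convergent spectral sequence~\eqref{eq:spec-seq-for-K-of-G-crossed}. As $g$ is at most $N$-to-one we have $\nvGR{T_\bullet} = \nvGR{T_\bullet}^{(N)}$, so only the columns $p = 0, \dots, N$ occur; since $K$-theory is $2$-periodic, only finitely many bidegrees are relevant. Because finite rank is inherited by subquotients and closed under finite extensions, it suffices to prove that each $E_1^{pq} = K_{p+q}(G \ltimes C_0(\nvGR{T_\bullet}^{(p)} \smallsetminus \nvGR{T_\bullet}^{(p-1)}))$ has finite rank.

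The key reduction is that $K_\bullet(G \ltimes C_0(T_m))$ has finite rank for every $m \ge 0$. Indeed, by the identification $G \ltimes T_m \cong R^u(Z_m, \zeta_m)|_{T_m}$ established in Section~\ref{sec:comp-hom} and Morita invariance, $K_\bullet(G \ltimes C_0(T_m)) \cong K_\bullet(C^* R^u(Z_m, \zeta_m))$. Now $(Z_m, \zeta_m)$ is a non-wandering Smale space with totally disconnected stable sets, so Theorem~\ref{thm:spec-seq-Smale-tot-discon-st-set} applies and presents these $K$-groups by a spectral sequence with $E^2$-sheet $H^s_\bullet(Z_m) \otimes K_\bullet(\bC)$; in particular their rank is bounded by that of $H^s_\bullet(Z_m)$. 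It therefore remains to see that Putnam's stable homology $H^s_\bullet(Z_m)$ has finite rank: it is the homology of the normalised double complex built from the stable dimension groups $D^s(\Sigma_{L,M})$ of shifts of finite type, each of which is a stationary inductive limit of finitely generated free abelian groups and hence of finite rank, while the finite-to-one hypotheses on the two factor maps bound the complex in both directions, leaving only finitely many terms.

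It remains to bound a single column of~\eqref{eq:spec-seq-for-K-of-G-crossed} using this reduction. As in the proof of Proposition~\ref{prop:rat-segal-e1-sheet}, the stratum $\nvGR{T_\bullet}^{(p)} \smallsetminus \nvGR{T_\bullet}^{(p-1)}$ is the quotient of $\mathring\Delta^p \times T_p^f$ by the free diagonal action of $S_{p+1}$. Rationalising and applying \cite{MR2820377}*{Proposition 5.5} in its $G$-equivariant form identifies $K_\bullet(G \ltimes C_0(\nvGR{T_\bullet}^{(p)} \smallsetminus \nvGR{T_\bullet}^{(p-1)})) \otimes \Q$ with the sign-isotypic summand of $K_{\bullet-p}(G \ltimes C_0(T_p^f)) \otimes \Q$, so that it is in particular a subspace of the latter. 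The open $G$-invariant inclusion $T_p^f \subset T_p$, whose closed complement is the non-free locus, yields a $G$-equivariant six-term exact sequence; the non-free locus is a finite union of diagonal strata, each $G$-equivariantly homeomorphic to some $T_{p'} = g_{p'}^{-1}(T)$ with $p' < p$. Hence finite rank of $K_\bullet(G \ltimes C_0(T_{p'}))$ for all $p' \le p$, which the previous paragraph supplies, forces $K_\bullet(G \ltimes C_0(T_p^f)) \otimes \Q$ and therefore $E_1^{pq} \otimes \Q$ to be finite-dimensional. Summing over the finitely many columns gives the theorem.

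The main obstacle I anticipate lies in making the two descent steps rigorous. First, one must check that \cite{MR2820377}*{Proposition 5.5} and the sign-isotypic computation of Proposition~\ref{prop:rat-segal-e1-sheet} genuinely descend from the underlying spaces to the crossed products by $G$, and that the stratification of the non-free locus is controlled purely by the lower fibre products $Z_{p'}$, so that no new contributions appear. Second, and more delicately, the finite rank of $H^s_\bullet(Z_m)$ hinges on the boundedness of Putnam's defining complex in the $Y$-direction as well as the $Z$-direction; while the latter is recorded in Section~\ref{sec:homolog-Sm-sp} through the vanishing of the alternating part for $M > N$, the former must be extracted from the finite-to-one-ness of the $s$-bijective factor map together with the corresponding normalisation in the $Y$-variable.
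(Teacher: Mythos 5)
Your overall skeleton coincides with the paper's: the same spectral sequence \eqref{eq:spec-seq-for-K-of-G-crossed}, the same observation that only the columns $p \le N$ survive, and the same reduction of each column to the $K$-theory of $C^* R^u(Z_p, \zeta_p)$, which has finite rank by the totally disconnected stable case (this is precisely \cite{valmak:groupoidtwo}*{Corollary 3.10}; your re-derivation of it via Theorem~\ref{thm:spec-seq-Smale-tot-discon-st-set} and the finite rank of Putnam's homology is fine, and your second flagged ``obstacle'' is not one: Putnam's alternating complex is bounded in both directions because both legs of an $s/u$-bijective pair are finite-to-one). The genuine gap is the step you yourself flag and defer: identifying $K_\bullet(G \ltimes C_0(\nvGR{T_\bullet}^{(p)} \smallsetminus \nvGR{T_\bullet}^{(p-1)})) \otimes \Q$ with a sign-isotypic summand of $K_\bullet(G \ltimes C_0(T_p^f)) \otimes \Q$. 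There is no off-the-shelf ``$G$-equivariant form'' of \cite{MR2820377}*{Proposition 5.5}, and the naive statement ``$K$-theory of a crossed product by a finite group is rationally the invariants'' is simply false without freeness (for $\Z/2$ acting trivially on $\bC$ one gets $K_0(\bC \rtimes \Z/2) \otimes \Q \cong \Q^2 \neq \Q$). So the freeness of the $S_{p+1}$-action on $\mathring\Delta^p \times T_p^f$ must be transported through the crossed product by $G$, and that is exactly what requires proof. Since this identification is the crux of the whole argument, asserting it leaves the proof incomplete.

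The paper fills this hole without any equivariant Echterhoff--Emerson statement: for fixed $p$ it runs the homological spectral sequence of Theorem~\ref{thm:spec-seq-gen} with $H = G^{(0)}$ (Remark~\ref{rem:spec-seq-gen-special-cases}) for the coefficient algebra $C_0(\nvGR{T_\bullet}^{(p)} \smallsetminus \nvGR{T_\bullet}^{(p-1)})$, compatibly with the $S_{p+1}$-action. The $E^1$-sheet of that inner spectral sequence consists of \emph{non-equivariant} $K$-groups of the nerve spaces $G^{(p')} \times_T (\nvGR{T_\bullet}^{(p)} \smallsetminus \nvGR{T_\bullet}^{(p-1)})$, which are again quotients of $\mathring\Delta^p \times (G^{(p')} \times_T T_p^f)$ by a free $S_{p+1}$-action; hence the space-level Proposition~\ref{prop:rat-segal-e1-sheet} applies and identifies the rationalized $E^1$-sheet with $(K_{q'+p}(C_0(G^{(p')} \ltimes T_p)) \otimes \Q)_A$. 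Comparing abutments then yields $E_1^{pq} \otimes \Q \cong (K_q(C^*(G \ltimes T_p)) \otimes \Q)_A$, a subspace of $K_q(C^* R^u(Z_p, \zeta_p)) \otimes \Q$. Note that this device also makes your six-term/stratification detour unnecessary: Proposition~\ref{prop:rat-segal-e1-sheet} compares directly with $T_p$ rather than $T_p^f$, the non-free locus being killed rationally by Lemma~\ref{lem:triv-alt-for-non-free-action}, which is cleaner than finite-rank bookkeeping; and your stratification claim is slightly off as stated, since the non-free locus is a non-disjoint union of diagonals whose orbit-type strata are free parts of lower fiber products rather than full copies of $T_{p'}$.
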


\begin{proof}
Let us fix a $u$-bijective map $g \colon (Z, \zeta) \to (X, \phi)$ from another Smale space with totally disconnected stable sets, with $g$ at most $N$-to-one.
Take a transversal $T \subset X$ as in Proposition~\ref{prop:lift-transv}, and consider the étale groupoid $G = R^u(X, \phi)|_T$ and the totally disconnected $G$-space $T_0 = g^{-1}(T)$.
Let us take the constructions from Sections~\ref{sec:equiv-k-th-spec-seq} and~\ref{sec:sof-res-from-u-bij-map}.
Then the rationalization of the cohomological spectral sequence \eqref{eq:spec-seq-for-K-of-G-crossed} becomes
\begin{equation}\label{eq:rat-cohom-sp-seq}
E_1^{p q} = K_{p+q}(G \ltimes C_0(\nvGR{T_\bullet}^{(p)} \setminus \nvGR{T_\bullet}^{(p-1)})) \otimes \Q \Rightarrow K_{p + q}(C^*_r G) \otimes \Q.
\end{equation}

Let us fix $p$.
Again by Theorem~\ref{thm:spec-seq-gen}, there is a homological spectral sequence
\[
E^1_{p' q'} = K_{q'}(C_0(G^{(p')} \ltimes (\nvGR{T_\bullet}^{(p)} \setminus \nvGR{T_\bullet}^{(p-1)}))) \Rightarrow  K_{p'+q'}(G \ltimes C_0(\nvGR{T_\bullet}^{(p)} \setminus \nvGR{T_\bullet}^{(p-1)}))
\]
compatible with the actions of $S_{p+1}$.
By Proposition~\ref{prop:rat-segal-e1-sheet}, rationalization at the $E^1$-sheet agrees with
\[
(K_{q' + p}(C_0(G^{(p')} \ltimes T_p)) \otimes \Q)_A,
\]
which converges to $(K_q(C^*(G \ltimes T_p)) \otimes \Q)_A$.
This implies that the spectral sequence \eqref{eq:rat-cohom-sp-seq} satisfies
\[
E_1^{p q} \cong (K_q(C^*(G \ltimes T_p)) \otimes \Q)_A.
\]
The right hand side can be considered as a submodule of $K_q(C^*(R^u(Z_p, \zeta_p))) \otimes \Q$.
On one hand, we know that $K_q(C^*(R^u(Z_p, \zeta_p)))$ is of finite rank by~\cite{valmak:groupoidtwo}*{Corollary 3.10}.
On the other, we have $E_1^{p q} = 0$ for $p > N$, hence the assertion.
\end{proof}

Recall that taking the inverse homeomorphism we get the stable equivalence relation:
\[
R^s(X, \phi) = R^u(X, \phi^{-1}).
\]
In particular, $K_\bullet(C^* R^s(X, \phi))$ is also of finite rank by the above theorem.

\begin{remark}\label{rem:kpwiso}
In~\cite{MR3692021}, Kaminker, Putnam, and Whittaker considered $K$-theoretic duality for the \emph{Ruelle algebras}
\begin{align*}
\cR_s(X, \phi) &= C^*(R^s(X, \phi)) \rtimes_\phi \Z,&
\cR_u(X, \phi) &= C^*(R^u(X, \phi)) \rtimes_\phi \Z.
\end{align*}
They showed that these algebras are odd Spanier--Whitehead dual to each other~\cite{val:shi}, and that this duality implies $\cR_s(X, \phi) \cong \cR_u(X, \phi)$ under the assumption that $K_\bullet(C^* R^s(X, \phi))$ and $K_\bullet(C^* R^u(X, \phi))$ are of finite rank.
They have also conjectured that this assumption is unnecessary, and our result above implies that this is indeed the case.
\end{remark}

\begin{remark}
In view of the classification program of C$^*$-algebras and the results in~\citelist{\cite{deeley:strung}\cite{dgy:rrzero}}, it is natural to ask whether the range of $K$-theory on the class of (simple) classifiable, real rank zero C$^*$-algebras is exhausted by the (un)stable algebras of Smale spaces up to restricting to a transversal and passing to hereditary subalgebras.

However, the above finiteness implies that this is not possible.
Indeed, there are algebras as above whose $K$-groups are not of finite rank.
For example, the simple AF algebra associated to the following graph
\[
\begin{tikzpicture}[->,>=stealth',auto,node distance=1.5cm,
                    thick,main node/.style={circle,draw},
					every loop/.style={min distance=10mm}]
\node[main node] (1) {};
\node[main node] (2) [right of= 1] {};
\node[main node] (3) [right of= 2] {};
\node (4) [right of= 3] {$\cdots$,};
\path[]
  (1)   edge [loop left,in=150,out=210,looseness=30] node [yshift=2pt] {} (1)
  (1)   edge [bend left=35] node {} (2)
  (2)   edge [bend left=35] node {} (1)
  (2)   edge [bend left=35] node {} (3)
  (3)   edge [bend left=35] node {} (2)
  (3)   edge [bend left=35] node {} (4)
  (4)   edge [bend left=35] node {} (3) ;
\end{tikzpicture}
\]
or $A^{\otimes \infty}$ for a classifiable algebra $A$ whose $K$-groups have rank more than $1$ will do.
\end{remark}

% *****************************************************************
% Ending
%******************************************************************

\raggedright

\begin{bibdiv}
\begin{biblist}

\bib{MR1631708}{article}{
      author={Anderson, Jared~E.},
      author={Putnam, Ian~F.},
       title={Topological invariants for substitution tilings and their
  associated {$C^*$}-algebras},
        date={1998},
        ISSN={0143-3857},
     journal={Ergodic Theory Dynam. Systems},
      volume={18},
      number={3},
       pages={509\ndash 537},
         url={http://dx.doi.org/10.1017/S0143385798100457},
         doi={10.1017/S0143385798100457},
      review={\MR{1631708}},
}

%\bib{black:kth}{book}{
%      author={Blackadar, Bruce},
%       title={{$K$}-theory for operator algebras},
%      series={Mathematical Sciences Research Institute Publications},
%   publisher={Springer-Verlag, New York},
%        date={1986},
%      volume={5},
%        ISBN={0-387-96391-X},
%         url={http://dx.doi.org/10.1007/978-1-4613-9572-0},
%         doi={10.1007/978-1-4613-9572-0},
%      review={\MR{859867}},
%}

\bib{bdgw:matui}{article}{
author = {Bönicke, Christian},
author = {Dell'Aiera, Clément},
author = {Gabe, James},
author = {Willett, Rufus},
title = {Dynamic asymptotic dimension and Matui's HK conjecture},
journal = {Proceedings of the London Mathematical Society},
volume = {126},
number = {4},
pages = {1182-1253},
doi = {https://doi.org/10.1112/plms.12510},
url = {https://londmathsoc.onlinelibrary.wiley.com/doi/abs/10.1112/plms.12510},
eprint = {https://londmathsoc.onlinelibrary.wiley.com/doi/pdf/10.1112/plms.12510},
year = {2023}
}

\bib{val:kthpgrp}{article}{
      author={B\"{o}nicke, Christian},
      author={Proietti, Valerio},
      title={Categorical approach to the {B}aum--{C}onnes conjecture for
  \'{e}tale groupoids},
      date={2024-01-02},
      journal={Journal of the Institute of Mathematics of Jussieu},
      url={https://www.cambridge.org/core/journals/journal-of-the-institute-of-mathematics-of-jussieu/article/categorical-approach-to-the-baumconnes-conjecture-for-etale-groupoids/809AEC9077F4253A7D75BA472D26D2CD},
      doi={10.1017/S1474748023000531},
      note= {Online First},
}

\bib{bs:dynbook}{book}{
      author={Brin, Michael},
      author={Stuck, Garrett},
       title={Introduction to dynamical systems},
   publisher={Cambridge University Press, Cambridge},
        date={2002},
        ISBN={0-521-80841-3},
         url={https://doi.org/10.1017/CBO9780511755316},
         doi={10.1017/CBO9780511755316},
      review={\MR{1963683}},
}

\bib{cramo:hom}{article}{
      author={Crainic, Marius},
      author={Moerdijk, Ieke},
       title={A homology theory for {\'e}tale groupoids},
        date={2000},
        ISSN={0075-4102},
     journal={J. Reine Angew. Math.},
      volume={521},
       pages={25\ndash 46},
  eprint={\href{http://arxiv.org/abs/math/9905011}{\texttt{arXiv:math/9905011
  [math.KT]}}},
         url={http://dx.doi.org/10.1515/crll.2000.029},
         doi={10.1515/crll.2000.029},
      review={\MR{1752294 (2001f:58039)}},
}

\bib{deeley:hk}{article}{
      author={Deeley, Robin~J.},
       title={A counterexample to the {HK}-conjecture that is principal},
        date={2023},
        ISSN={0143-3857},
     journal={Ergodic Theory Dynam. Systems},
      volume={43},
      number={6},
       pages={1829\ndash 1846},
      eprint={\href{http://arxiv.org/abs/2106.01527}{\texttt{arXiv:2106.01527
  [math.KT]}}},
         url={https://doi.org/10.1017/etds.2022.25},
         doi={10.1017/etds.2022.25},
      review={\MR{4583796}},
}

\bib{dgy:rrzero}{article}{
      author={Deeley, Robin~J.},
      author={Goffeng, Magnus},
      author={Yashinski, Allan},
       title={Smale space {$C^*$}-algebras have nonzero projections},
        date={2020},
        ISSN={0002-9939},
     journal={Proc. Amer. Math. Soc.},
      volume={148},
      number={4},
       pages={1625\ndash 1639},
      eprint={\href{http://arxiv.org/abs/1901.10324}{\texttt{arXiv:1901.10324
  [math.OA]}}},
         url={https://doi.org/10.1090/proc/14837},
         doi={10.1090/proc/14837},
      review={\MR{4069199}},
}

\bib{dkw:dyn}{article}{
      author={Deeley, Robin~J.},
      author={Killough, D.~Brady},
      author={Whittaker, Michael~F.},
       title={Dynamical correspondences for {S}male spaces},
        date={2016},
        ISSN={1076-9803},
     journal={New York J. Math.},
      volume={22},
       pages={943\ndash 988},
      eprint={\href{http://arxiv.org/abs/1505.05558}{\texttt{arXiv:1505.05558
  [math.DS]}}},
         url={http://nyjm.albany.edu:8000/j/2016/22_943.html},
      review={\MR{3576278}},
}

\bib{dkw:func}{article}{
      author={Deeley, Robin~J.},
      author={Killough, D.~Brady},
      author={Whittaker, Michael~F.},
       title={Functorial properties of {P}utnam's homology theory for {S}male
  spaces},
        date={2016},
        ISSN={0143-3857},
     journal={Ergodic Theory Dynam. Systems},
      volume={36},
      number={5},
       pages={1411\ndash 1440},
      eprint={\href{http://arxiv.org/abs/1407.2992}{\texttt{arXiv:1407.2992
  [math.DS]}}},
         url={https://doi.org/10.1017/etds.2014.134},
      review={\MR{3519418}},
}

\bib{deeley:strung}{article}{
      author={Deeley, Robin~J.},
      author={Strung, Karen~R.},
       title={Nuclear dimension and classification of {$\rm C^*$}-algebras
  associated to {S}male spaces},
        date={2018},
        ISSN={0002-9947},
     journal={Trans. Amer. Math. Soc.},
      volume={370},
      number={5},
       pages={3467\ndash 3485},
      eprint={\href{http://arxiv.org/abs/1601.02432}{\texttt{arXiv:1601.02432
  [math.OA]}}},
         url={https://doi.org/10.1090/tran/7046},
         doi={10.1090/tran/7046},
      review={\MR{3766855}},
}

\bib{MR2820377}{article}{
      author={Echterhoff, Siegfried},
      author={Emerson, Heath},
       title={Structure and {$K$}-theory of crossed products by proper
  actions},
        date={2011},
        ISSN={0723-0869},
     journal={Expo. Math.},
      volume={29},
      number={3},
       pages={300\ndash 344},
      eprint={\href{http://arxiv.org/abs/1012.5214}{\texttt{arXiv:1012.5214
  [math.KT]}}},
         url={https://doi.org/10.1016/j.exmath.2011.05.001},
         doi={10.1016/j.exmath.2011.05.001},
      review={\MR{2820377}},
}

\bib{eilstee:algtop}{book}{
      author={Eilenberg, Samuel},
      author={Steenrod, Norman},
       title={Foundations of algebraic topology},
   publisher={Princeton University Press, Princeton, New Jersey},
        date={1952},
      review={\MR{0050886}},
}

\bib{MR0482697}{book}{
      author={Engelking, Ryszard},
       title={Dimension theory},
      series={North-Holland Mathematical Library},
   publisher={North-Holland Publishing Co., Amsterdam-Oxford-New York;
  PWN---Polish Scientific Publishers, Warsaw},
        date={1978},
      volume={19},
        ISBN={0-444-85176-3},
         url={https://mathscinet.ams.org/mathscinet-getitem?mr=0482697},
        note={Translated from the Polish and revised by the author},
      review={\MR{0482697}},
}

\bib{simsfarsi:hk}{article}{
      author={Farsi, Carla},
      author={Kumjian, Alex},
      author={Pask, David},
      author={Sims, Aidan},
       title={Ample groupoids: equivalence, homology, and {M}atui's {HK}
  conjecture},
        date={2019},
        ISSN={1867-5778},
     journal={M\"{u}nster J. Math.},
      volume={12},
      number={2},
       pages={411\ndash 451},
      eprint={\href{http://arxiv.org/abs/1808.07807}{\texttt{arXiv:1808.07807
  [math.OA]}}},
         url={https://doi-org.ezproxy.uio.no/10.17879/53149724091},
         doi={10.17879/53149724091},
      review={\MR{4030921}},
}

\bib{MR998125}{article}{
      author={Fiedorowicz, Zbigniew},
      author={Loday, Jean-Louis},
       title={Crossed simplicial groups and their associated homology},
        date={1991},
        ISSN={0002-9947},
     journal={Trans. Amer. Math. Soc.},
      volume={326},
      number={1},
       pages={57\ndash 87},
         url={http://dx.doi.org/10.2307/2001855},
         doi={10.2307/2001855},
      review={\MR{998125 (91j:18018)}},
}

\bib{MR0345092}{book}{
      author={Godement, Roger},
       title={Topologie alg\'{e}brique et th\'{e}orie des faisceaux},
   publisher={Hermann, Paris},
        date={1973},
        note={Troisi\`eme \'{e}dition revue et corrig\'{e}e, Publications de
  l'Institut de Math\'{e}matique de l'Universit\'{e} de Strasbourg, XIII,
  Actualit\'{e}s Scientifiques et Industrielles, No. 1252},
      review={\MR{0345092}},
}

\bib{MR2840650}{book}{
      author={Goerss, Paul~G.},
      author={Jardine, John~F.},
       title={Simplicial homotopy theory},
      series={Modern Birkh\"auser Classics},
   publisher={Birkh\"auser Verlag, Basel},
        date={2009},
        ISBN={978-3-0346-0188-7},
         url={https://doi.org/10.1007/978-3-0346-0189-4},
        note={Reprint of the 1999 edition [MR1711612]},
      review={\MR{2840650}},
}

\bib{MR3692021}{article}{
      author={Kaminker, Jerome},
      author={Putnam, Ian~F.},
      author={Whittaker, Michael~F.},
       title={K-theoretic duality for hyperbolic dynamical systems},
        date={2017},
        ISSN={0075-4102},
     journal={J. Reine Angew. Math.},
      volume={730},
       pages={263\ndash 299},
      eprint={\href{http://arxiv.org/abs/1009.4999}{\texttt{arXiv:1009.4999
  [math.KT]}}},
         url={https://doi.org/10.1515/crelle-2014-0126},
      review={\MR{3692021}},
}

\bib{kp:clas}{article}{
      author={Kirchberg, Eberhard},
      author={Phillips, N.~Christopher},
       title={Embedding of exact {$C^*$}-algebras in the {C}untz algebra {$\scr
  O_2$}},
        date={2000},
        ISSN={0075-4102,1435-5345},
     journal={J. Reine Angew. Math.},
      volume={525},
       pages={17\ndash 53},
         url={https://doi.org/10.1515/crll.2000.065},
         doi={10.1515/crll.2000.065},
      review={\MR{1780426}},
}

\bib{krieger:inv}{article}{
      author={Krieger, Wolfgang},
       title={On dimension functions and topological {M}arkov chains},
        date={1980},
        ISSN={0020-9910},
     journal={Invent. Math.},
      volume={56},
      number={3},
       pages={239\ndash 250},
         url={https://doi.org/10.1007/BF01390047},
         doi={10.1007/BF01390047},
      review={\MR{561973}},
}

\bib{gall:kk}{article}{
      author={Le~Gall, Pierre-Yves},
       title={Th\'eorie de {K}asparov \'equivariante et groupo\"\i des. {I}},
        date={1999},
        ISSN={0920-3036},
     journal={$K$-Theory},
      volume={16},
      number={4},
       pages={361\ndash 390},
         url={https://doi.org/10.1023/A:1007707525423},
         doi={10.1023/A:1007707525423},
      review={\MR{1686846}},
}

\bib{mats:ruellemarkov}{article}{
      author={Matsumoto, Kengo},
       title={Topological conjugacy of topological {M}arkov shifts and {R}uelle
  algebras},
        date={2019},
        ISSN={0379-4024},
     journal={J. Operator Theory},
      volume={82},
      number={2},
       pages={253\ndash 284},
      eprint={\href{http://arxiv.org/abs/1706.07155}{\texttt{arXiv:1706.07155
  [math.OA]}}},
      review={\MR{4015953}},
}

\bib{MR2876963}{article}{
      author={Matui, Hiroki},
       title={Homology and topological full groups of \'etale groupoids on
  totally disconnected spaces},
        date={2012},
        ISSN={0024-6115},
     journal={Proc. Lond. Math. Soc. (3)},
      volume={104},
      number={1},
       pages={27\ndash 56},
      eprint={\href{http://arxiv.org/abs/0909.1624}{\texttt{arXiv:0909.1624
  [math.OA]}}},
         url={https://doi.org/10.1112/plms/pdr029},
      review={\MR{2876963}},
}

\bib{MR3552533}{article}{
      author={Matui, Hiroki},
       title={\'{E}tale groupoids arising from products of shifts of finite
  type},
        date={2016},
        ISSN={0001-8708},
     journal={Adv. Math.},
      volume={303},
       pages={502\ndash 548},
      eprint={\href{http://arxiv.org/abs/1512.01724}{\texttt{arXiv:1512.01724
  [math.OA]}}},
         url={https://doi.org/10.1016/j.aim.2016.08.023},
         doi={10.1016/j.aim.2016.08.023},
      review={\MR{3552533}},
}

\bib{meyer:tri}{article}{
      author={Meyer, Ralf},
       title={Homological algebra in bivariant {$K$}-theory and other
  triangulated categories. {II}},
        date={2008},
        ISSN={1875-158X},
     journal={Tbil. Math. J.},
      volume={1},
       pages={165\ndash 210},
      eprint={\href{http://arxiv.org/abs/0801.1344}{\texttt{arXiv:0801.1344
  [math.KT]}}},
      review={\MR{2563811}},
}

\bib{meyernest:tri}{incollection}{
      author={Meyer, Ralf},
      author={Nest, Ryszard},
       title={Homological algebra in bivariant {$K$}-theory and other
  triangulated categories. {I}},
        date={2010},
   booktitle={Triangulated categories},
      series={London Math. Soc. Lecture Note Ser.},
      volume={375},
   publisher={Cambridge Univ. Press, Cambridge},
       pages={236\ndash 289},
  eprint={\href{http://arxiv.org/abs/math/0702146}{\texttt{arXiv:math/0702146
  [math.KT]}}},
      review={\MR{2681710}},
}

\bib{murewi:morita}{article}{
      author={Muhly, Paul~S.},
      author={Renault, Jean~N.},
      author={Williams, Dana~P.},
       title={Equivalence and isomorphism for groupoid {$C^\ast$}-algebras},
        date={1987},
        ISSN={0379-4024},
     journal={J. Operator Theory},
      volume={17},
      number={1},
       pages={3\ndash 22},
      review={\MR{873460 (88h:46123)}},
}

%\bib{nee:tri}{book}{
%    AUTHOR = {Neeman, Amnon},
%     TITLE = {Triangulated categories},
%    SERIES = {Annals of Mathematics Studies},
%    VOLUME = {148},
% PUBLISHER = {Princeton University Press, Princeton, NJ},
%      YEAR = {2001},
%     PAGES = {viii+449},
%      ISBN = {0-691-08685-0; 0-691-08686-9},
%       DOI = {10.1515/9781400837212},
%       URL = {https://doi.org/10.1515/9781400837212},
%}

\bib{MR2162164}{book}{
      author={Nekrashevych, Volodymyr},
       title={Self-similar groups},
      series={Mathematical Surveys and Monographs},
   publisher={American Mathematical Society, Providence, RI},
        date={2005},
      volume={117},
        ISBN={0-8218-3831-8},
         url={https://doi.org/10.1090/surv/117},
         doi={10.1090/surv/117},
      review={\MR{2162164}},
}

\bib{MR2526786}{article}{
      author={Nekrashevych, Volodymyr},
       title={{$C^*$}-algebras and self-similar groups},
        date={2009},
        ISSN={0075-4102},
     journal={J. Reine Angew. Math.},
      volume={630},
       pages={59\ndash 123},
         url={https://mathscinet.ams.org/mathscinet-getitem?mr=2526786},
         doi={10.1515/CRELLE.2009.035},
      review={\MR{2526786}},
}

\bib{val:shi}{article}{
    author = {Nishikawa, S.},
    author = {Proietti, V.},
    title     = {Groups with Spanier--Whitehead duality},
    journal = {Annals of {$K$}-theory},
    year      = {2020},
    volume = {5},
    number = {3},
    pages = {465--500},
    DOI = {10.2140/akt.2020.5.465},
    %URL = {https://msp.org/akt/2020/5-3/p04.xhtml},
}

\bib{phil:clas}{article}{
      author={Phillips, N.~C.},
       title={A classification theorem for nuclear purely infinite simple {$C^*$}-algebras},
        date={2000},
     journal={Doc. Math.},
      volume={5},
       pages={49-114},
}

\bib{val:smale}{article}{
      author={Proietti, Valerio},
       title={A note on homology for {S}male spaces},
        date={2020},
        ISSN={1661-7207},
     journal={Groups Geom. Dyn.},
      volume={14},
      number={3},
       pages={813\ndash 836},
      eprint={\href{http://arxiv.org/abs/1807.06922}{\texttt{arXiv:1807.06922
  [math.KT]}}},
         url={https://doi-org.ezproxy.uio.no/10.4171/ggd/564},
         doi={10.4171/ggd/564},
      review={\MR{4167022}},
}

\bib{valmak:groupoid}{article}{
   author={Proietti, Valerio},
   author={Yamashita, Makoto},
   title={Homology and $K$-theory of dynamical systems I. Torsion-free ample
   groupoids},
   journal={Ergodic Theory Dynam. Systems},
   volume={42},
   date={2022},
   number={8},
   pages={2630--2660},
   issn={0143-3857},
   review={\MR{4448401}},
   doi={10.1017/etds.2021.50},
   eprint={\href{http://arxiv.org/abs/2006.08028}{\texttt{arXiv:2006.08028 [math.KT]}}},
}

\bib{valmak:groupoidtwo}{article}{
   author={Proietti, Valerio},
   author={Yamashita, Makoto},
   title={Homology and $K$-theory of dynamical systems II. Smale spaces with
   totally disconnected transversal},
   journal={J. Noncommut. Geom.},
   volume={17},
   date={2023},
   number={3},
   pages={957--998},
   issn={1661-6952},
   review={\MR{4626307}},
   doi={10.4171/jncg/494},
   eprint={\href{http://arxiv.org/abs/2104.10938}{\texttt{arXiv:2104.10938 [math.KT]}}},
}

\bib{valmak:kpd}{article}{
      title={Homology and K-theory of dynamical systems IV. Further structural results on groupoid homology}, 
      DOI={10.1017/etds.2024.37}, 
      journal={Ergodic Theory and Dynamical Systems}, 
      author={Proietti, V.},
      author={Yamashita, M.}, 
      date={2024-05-15}, 
      note={Online First},
}

\bib{put:algSmale}{article}{
      author={Putnam, Ian~F.},
       title={{$C^*$}-algebras from {S}male spaces},
        date={1996},
        ISSN={0008-414X},
     journal={Canad. J. Math.},
      volume={48},
      number={1},
       pages={175\ndash 195},
         url={https://doi.org/10.4153/CJM-1996-008-2},
         doi={10.4153/CJM-1996-008-2},
      review={\MR{1382481}},
}

\bib{put:funct}{article}{
      author={Putnam, Ian~F.},
       title={Functoriality of the {$C^*$}-algebras associated with hyperbolic
  dynamical systems},
        date={2000},
        ISSN={0024-6107},
     journal={J. London Math. Soc. (2)},
      volume={62},
      number={3},
       pages={873\ndash 884},
         url={https://doi.org/10.1112/S002461070000140X},
         doi={10.1112/S002461070000140X},
      review={\MR{1794291}},
}

\bib{put:HoSmale}{article}{
      author={Putnam, Ian~F.},
       title={A homology theory for {S}male spaces},
        date={2014},
        ISSN={0065-9266},
     journal={Mem. Amer. Math. Soc.},
      volume={232},
      number={1094},
       pages={viii+122},
         url={https://doi.org/10.1090/memo/1094},
      review={\MR{3243636}},
}

\bib{put:notes}{misc}{
      author={Putnam, Ian~F.},
       title={Lecture notes on smale spaces},
  how={lecture note},
  url={http://www.math.uvic.ca/faculty/putnam/ln/Smale_spaces.pdf},
        date={2015},
        note={Accessed: 2010-09-30},
}

\bib{put:spiel}{article}{
      author={Putnam, Ian~F.},
      author={Spielberg, Jack},
       title={The structure of {$C^*$}-algebras associated with hyperbolic
  dynamical systems},
        date={1999},
        ISSN={0022-1236},
     journal={Journal of Functional Analysis},
      volume={163},
      number={2},
       pages={279 \ndash  299},
  url={http://www.sciencedirect.com/science/article/pii/S0022123698933791},
         doi={https://doi.org/10.1006/jfan.1998.3379},
}

\bib{MR584266}{book}{
      author={Renault, Jean},
       title={A groupoid approach to {$C^{\ast} $}-algebras},
      series={Lecture Notes in Mathematics},
   publisher={Springer, Berlin},
        date={1980},
      volume={793},
        ISBN={3-540-09977-8},
      review={\MR{584266}},
}

\bib{ruelle:thermo}{book}{
      author={Ruelle, David},
       title={Thermodynamic formalism},
     edition={Second},
      series={Cambridge Mathematical Library},
   publisher={Cambridge University Press, Cambridge},
        date={2004},
        ISBN={0-521-54649-4},
         url={https://doi.org/10.1017/CBO9780511617546},
         doi={10.1017/CBO9780511617546},
        note={The mathematical structures of equilibrium statistical
  mechanics},
      review={\MR{2129258}},
}

\bib{sga4-vbis}{incollection}{
      author={Saint-Donat, B.},
       title={Techniques de descente cohomologique},
        date={1972},
   booktitle={Th\'{e}orie des topos et cohomologie \'{e}tale des sch\'{e}mas.
  tome 2.},
   publisher={Springer-Verlag},
        note={SGA4, Vbis},
}

\bib{scarparo:hk}{article}{
      author={Scarparo, Eduardo},
       title={Homology of odometers},
        date={2020},
        ISSN={0143-3857},
     journal={Ergodic Theory Dynam. Systems},
      volume={40},
      number={9},
       pages={2541\ndash 2551},
      eprint={\href{http://arxiv.org/abs/1811.05795}{\texttt{arXiv:1811.05795
  [math.OA]}}},
         url={https://doi.org/10.1017/etds.2019.13},
         doi={10.1017/etds.2019.13},
      review={\MR{4130816}},
}

\bib{MR232393}{article}{
      author={Segal, Graeme},
       title={Classifying spaces and spectral sequences},
        date={1968},
        ISSN={0073-8301},
     journal={Inst. Hautes \'{E}tudes Sci. Publ. Math.},
      number={34},
       pages={105\ndash 112},
         url={http://www.numdam.org/item?id=PMIHES_1968__34__105_0},
      review={\MR{232393}},
}

\bib{thomsen:smale}{article}{
      author={Thomsen, Klaus},
       title={{C$^*$}-algebras of homoclinic and heteroclinic structure in
  expansive dynamics},
        date={2010},
        ISSN={0065-9266},
     journal={Mem. Amer. Math. Soc.},
      volume={206},
      number={970},
       pages={x+122},
         url={https://doi.org/10.1090/S0065-9266-10-00581-8},
         doi={10.1090/S0065-9266-10-00581-8},
      review={\MR{2667385}},
}

\bib{MR1703305}{article}{
      author={Tu, Jean-Louis},
       title={La conjecture de {B}aum-{C}onnes pour les feuilletages
  moyennables},
        date={1999},
        ISSN={0920-3036},
     journal={$K$-Theory},
      volume={17},
      number={3},
       pages={215\ndash 264},
         url={http://dx.doi.org/10.1023/A:1007744304422},
         doi={10.1023/A:1007744304422},
      review={\MR{1703305 (2000g:19004)}},
}

%\bib{wei:homalg}{book}{
%    AUTHOR = {Weibel, Charles A.},
%     TITLE = {An introduction to homological algebra},
%    SERIES = {Cambridge Studies in Advanced Mathematics},
%    VOLUME = {38},
% PUBLISHER = {Cambridge University Press, Cambridge},
%      YEAR = {1994},
%     PAGES = {xiv+450},
%      ISBN = {0-521-43500-5; 0-521-55987-1},
%       DOI = {10.1017/CBO9781139644136},
%       URL = {https://doi.org/10.1017/CBO9781139644136},
%}

\end{biblist}
\end{bibdiv}
\end{document}